\newtheorem{theorem}{Theorem}[section]
\newtheorem{lemma}{Lemma}[section]
\newtheorem{algorithm}{Algorithm}[section]
\newtheorem{proposition}{Proposition}[section]
\newtheorem{assumption}{Assumption}[section]
\newtheorem{remark}{Remark}[section]
\newenvironment{proof}{{\noindent \bf Proof:}}{\hfill$\Box$\medskip}
\definecolor{lred}{rgb}{1,0.8,0.8}
\definecolor{lblue}{rgb}{0.8,0.8,1}
\definecolor{dred}{rgb}{0.6,0,0}
\definecolor{dblue}{rgb}{0,0,0.5}
\definecolor{dgreen}{rgb}{0,0.5,0.5}
 \title{Inexact indefinite proximal ADMMs for 2-block separable convex programs and applications to 4-block DNNSDPs}
\author{Li Shen\footnote{Department of Mathematics, South China University of Technology, Guangzhou, 510641, China (shen.li@mail.scut.edu.cn).}
 \ \ {\rm and}\ \ Shaohua Pan\footnote{Corresponding author. Department of Mathematics, South China University of Technology, Tianhe District of Guangzhou City, China
 (shhpan@scut.edu.cn).}}
 \date{July 8, 2015}
\begin{document}

 \maketitle

 \begin{abstract}
  This paper is concerned with two-block separable convex minimization problems with linear constraints,
  for which it is either impossible or too expensive to obtain the exact solutions of the subproblems
  involved in the proximal ADMM (alternating direction method of multipliers). Such structured convex
  minimization problems often arise from the two-block regroup of three or four-block separable
  convex optimization problems with linear constraints, or from the constrained total-variation superresolution
  image reconstruction problems in image processing. For them, we propose an inexact indefinite proximal
  ADMM of step-size $\tau\in\!(0,\frac{\sqrt{5}+1}{2})$ with two easily implementable inexactness
  criteria to control the solution accuracy of subproblems, and establish the convergence under
  a mild assumption on indefinite proximal terms. We apply the proposed inexact indefinite
  proximal ADMMs to the three or four-block separable convex minimization problems with linear constraints,
  which come from the important class of doubly nonnegative semidefinite programming
  (DNNSDP) problems with many linear equality and/or inequality constraints.
  Numerical results indicate that the inexact indefinite proximal ADMM with the absolute error criterion
  has a comparable performance with the directly extended multi-block ADMM of step-size $\tau=1.618$
  without convergence guarantee, whether in terms of the number of iterations or the computation time.

  \bigskip
  \noindent
  {\bf Keywords:} Separable convex optimization, inexact proximal ADMM, DNNSDPs\\

  \end{abstract}

\section{Introduction}\label{sec1}

 Let $\mathbb{X},\mathbb{Y}$ and $\mathbb{Z}$ be the finite dimensional vector spaces
 endowed with the inner product $\langle \cdot,\cdot\rangle$ and its induced norm $\|\cdot\|$.
 Given closed proper convex functions $f\!:\mathbb{X}\to(-\infty,+\infty]$ and $g\!:\mathbb{Y}\!\to(-\infty,+\infty]$,
 we are concerned with the separable convex optimization problem
\begin{align}\label{prob}
 &\min_{x\in\mathbb{X},y\in\mathbb{Y}} f(x)+g(y)\nonumber\\
 &\quad {\rm s.t.}\ \ \mathcal{A}^*x+ \mathcal{B}^*y = c,
 \end{align}
 where $\mathcal{A}\!: \mathbb{Z}\to\mathbb{X}$ and $\mathcal{B}\!: \mathbb{Z}\to\mathbb{Y}$
 are the given linear operators, $\mathcal{A}^*$ and $\mathcal{B}^*$ denote the adjoint operators
 of $\mathcal{A}$ and $\mathcal{B}$, respectively, and $c\in\mathbb{Z}$ is a given vector.

 \medskip

 As well known, there are many important cases with the form of (\ref{prob}), which include
 the covariance selection problems and semidefinite least squares problems in statistics
 \cite{BGA08,SMG10,Yuan12}, the sparse plus low-rank recovery problem arising from the so-called robust PCA
 (principle component analysis) with noisy and incomplete data \cite{WGRPM09,TYuan11},
 the constrained total-variation image restoration and reconstruction problems \cite{NWY10,ROF92},
 the simultaneous minimization of the nuclear norm and $\ell_1$-norm of a matrix arising from
 the low-rank and sparse representation for image classification and subspace clustering \cite{ZJD13,WXL13},
 and so on.

 \medskip

 For the structured convex minimization problem \eqref{prob}, the alternating direction method of multipliers
 (ADMM for short), first proposed by Glowinski and Marrocco \cite{GM75} and Gabay and Mercier \cite{GM76},
 is one of the most popular methods. For any given $\sigma>0$, let $L_{\sigma}\!:\mathbb{X}\times\mathbb{Y}\times\mathbb{Z}\to\!(-\infty,+\infty]$
 denote the augmented Lagrangian function of problem \eqref{prob}
 \[
   L_{\sigma}(x,y,z):=f(x)+g(y) + \langle z,\mathcal{A}^*x+ \mathcal{B}^*y -c\rangle + \frac{\sigma}{2}\|\mathcal{A}^*x+ \mathcal{B}^*y -c\|^2.
 \]
 The ADMM, from an initial point $(x^0,y^0,z^0)\in {\rm dom}\,f\times{\rm dom}\,g\times\mathbb{Z}$,
 consists of the steps
  \begin{subnumcases}{}\label{subprobx}
   x^{k+1}\in\mathop{\arg\min}_{x\in\mathbb{X}}\ L_{\sigma}(x,y^k,z^{k}),\\
      \label{subproby}
   y^{k+1}\in\mathop{\arg\min}_{y\in\mathbb{Y}}\ L_{\sigma}(x^{k+1},y,z^{k}),\\
   z^{k+1}=z^{k} +\tau\sigma\big(\mathcal{A}^*x^{k+1}+\mathcal{B}^*y^{k+1}-c\big),
  \label{multiplier1}
  \end{subnumcases}
  where $\tau\in(0,\frac{1+\sqrt{5}}{2})$ is a constant to control the step-size in
  \eqref{multiplier1}. The iterative scheme of ADMM actually embeds a Gaussian-Seidel
  decomposition into each iteration of the classical augmented Lagraigan method of
  Hestenes-Powell-Rockafellar \cite{Henstenes76,Powell69,Roc76}, so that the challenging task
  (i.e., the exact solution or the approximate solution with a high precision of the Lagrangian
  minimization problem) is relaxed to several easy ones.

  \medskip

  Notice that the subproblems \eqref{subprobx} and \eqref{subproby} in the ADMM may have
  no closed-form solutions or even be difficult to solve. When the functions $f$ and $g$ enjoy
  a closed-form Moreau envelope, one usually introduces the proximal terms
  $\frac{1}{2}\|x-x^k\|_{\mathcal{P}_{\!f}}$ and $\frac{1}{2}\|y-y^k\|_{\mathcal{P}_{\!g}}$
  respectively into the subproblems \eqref{subprobx} and \eqref{subproby} to cancel the operators
  $\mathcal{A}\mathcal{A}^*$ and $\mathcal{B}\mathcal{B}^*$ so as to get the exact solutions of
  proximal subproblems. This is the so-called proximal-ADMM which,
  for a chosen initial point $(x^0,y^0,z^0)\in {\rm dom}\,f\times{\rm dom}\,g\times\mathbb{Z}$,
  consists of
  \begin{subnumcases}{}
   x^{k+1}=\mathop{\arg\min}_{x\in\mathbb{X}}\ L_{\sigma}(x,y^k,z^{k})+\frac{1}{2}\|x-x^k\|_{\mathcal{P}_{\!f}},\\
   \label{psubprobx}
   y^{k+1}=\mathop{\arg\min}_{y\in\mathbb{Y}}\ L_{\sigma}(x^{k+1},y,z^{k})+\frac{1}{2}\|y-y^k\|_{\mathcal{P}_{\!g}},\\
    \label{psubproby}
   z^{k+1}=z^{k} +\tau\sigma\big(\mathcal{A}^*x^{k+1}+\mathcal{B}^*y^{k+1}-c\big).
  \label{pmultiplier1}
  \end{subnumcases}
  The existing works on the proximal ADMM mostly focus on the positive definite proximal terms
  (see, e.g., \cite{HLHY02,WY12,ZBO11}). It is easy to see that the proximal subproblems with
  the positive definite proximal terms will have a big difference from the original subproblems of ADMM.
  In fact, as pointed out in the conclusion remarks of \cite{HLHY02}, ``large and positive definite
  proximal terms will lead to easy solution of subproblems, but the number of iterations will increase.
  Therefore, for subproblems which are not extremely ill-posed, the proximal parameters should be small.''
  In view of this, some researchers recently develop the semi-proximal or indefinite proximal ADMM
  \cite{XW11,FPST13,LST2014} by using the positive semidefinite even indefinite proximal terms.
  The numerical experiments in \cite{FPST13} show that such tighter proximal terms display
  better numerical performance. In addition, it is worthwhile to emphasize that the ADMM itself
  is a semi-proximal (of course an indefinite proximal) ADMM, but is not in the family of
  positive definite proximal ADMMs.

  \medskip

  In this paper we are interested in problem \eqref{prob} in which the functions $f$ and/or $g$
  may not have a closed-form Moreau envelope or the linear operators $\mathcal{A}$ and/or
  $\mathcal{B}$ have a large spectral norm (now the proximal subproblems with a positive definite
  proximal term are bad surrogates for those of the ADMM), for which it is impossible or
  too expensive to achieve the exact solutions of the proximal subproblems though they are unique.
  Such separable convex optimization problems arise directly from the constrained total-variation
  superresolution image reconstruction problems \cite{CHAN07,NWY11} in image processing,
  and the two-block regroup of three or four-block separable convex minimization problems.
  Indeed, for the following four-block separable convex minimization problem
  \begin{align}\label{gprob}
   &\min_{x_i\in\mathbb{X}_i}\ {\textstyle \sum_{i=1}^4}f_i(x_i)\nonumber\\
   &\  {\rm s.t.}\ \ {\textstyle \sum_{i=1}^4}\mathcal{A}_i^*x_i= c
  \end{align}
  where $f_i\!:\mathbb{X}_i\to(-\infty,+\infty]$ for $i=1,2,3,4$ are closed proper convex functions,
  and $\mathcal{A}_i\!: \mathbb{Z}\to\mathbb{X}_i$ for $i=1,2,3,4$ are linear operators,
  since the directly extended multi-block ADMM does not have the convergence guarantee
  (see the  counterexamples in \cite{CHYY14}), one may rearrange it as the form of \eqref{prob} by
  reorganizing any two groups of variables into one group, and then apply the classical ADMM for
  solving the two-block regrouped problem. Clearly, the exact solution of each subproblem of ADMM
  for the two-block regrouped problem is difficult to obtain due to the cross of two classes of variables.
  In particular, the two-block regroup resolving of multi-block separable convex optimization
  also has a separate study value.


  \medskip

  To resolve this class of difficult two-block separable convex minimization problems,
  we propose an inexact indefinite proximal ADMM with a step-size $\tau\in\!(0,\frac{\sqrt{5}+1}{2})$,
  in which the proximal subproblems are solved to a certain accuracy with two easily implementable
  inexactness criteria to control the accuracy. Here, an indefinite proximal term, instead of a positive definite
  proximal term, is introduced into each subproblem of the ADMM to guarantee that each proximal subproblem
  has a unique solution as well as becomes a good surrogate for the original subproblem of the ADMM.
  For the proposed inexact indefinite proximal ADMM, we establish its convergence under a mild assumption
  on the indefinite proximal terms. To the best of our knowledge, this is the first convergent
  inexact proximal ADMM in which step-size $\tau$ may take the value in the interval $(1,\frac{\sqrt{5}+1}{2})$.
  We notice that a few existing research papers on inexact versions of the ADMM all focus on
  the unit step-size; see \cite{EB92,HLHY02,NWY11,GHY2014,CWY2014}, and moreover, only the papers
  \cite{NWY11,GHY2014,CWY2014} develop truly implementable inexactness criteria in the exact solutions are not required.
  Our inexact indefinite proximal ADMM is using the same absolute error criterion and
  and a little different relative error from the one used in \cite{NWY11}.
  It is well known that the ADMM with $\tau=1.618$ requires less $20\%$ to $50\%$ iterations
  than the one with $\tau=1$, especially for those difficult SDP problems \cite{WGY10}. Thus, the proposed
  inexact indefinite proximal ADMMs with a large step-size is expected to have better performance.

  \medskip

  In this work, we apply the inexact indefinite proximal ADMMs to the three and four-block separable
  convex minimization problems with linear constraints, coming from the duality of the doubly
  nonnegative semidefinite programming (DNNSDP) problems with many linear equality
  and/or inequality constraints. Specifically, we solve the two-block regroupment for the dual problems
  of DNNSDPs with the inexact indefinite proximal ADMM. Observe that the iterates yielded by
  solving each subproblem in an alternating way can satisfy the optimality condition approximately.
  Hence, in the implementation of the inexact indefinite proximal ADMMs, we get the inexact solution
  of each subproblem by minimizing the two group of variables alternately. Numerical results indicate
  that the inexact indefinite proximal ADMM with the absolute error criterion is comparable with
  the directly extended multi-block ADMM with step-size $\tau=1.618$ whether in terms of
  the number of iterations or the computation time, while the one with the relative error criterion
  requires less outer-iterations but more computation time since the error criterion is more restrictive
  and requires more inner-iterations. Thus, the inexact indefinite proximal ADMM with
  the absolute error criterion provides an efficient tool
  for handling the three and four-block separable convex minimization problems.

  \medskip

  We observe that there are several recent works \cite{WHML2013,HXY14,HTY14,FHWY2014} to regroup
  the multi-block separable convex minimization problems into two-block or several subblocks,
  and then solve each subblock simultaneously by introducing a positive definite
  proximal term related to the numbers of subproblems. Such procedures lead to easily
  solvable subproblems, but their performance becomes worse due to larger proximal terms.

  \medskip

  The rest of this paper is organized as follows. Section \ref{sec2} gives some notations
  and the main assumption. Section \ref{sec3} describes the inexact indefinite proximal ADMMs
  and analyzes the properties of the sequence generated. The convergence of the inexact indefinite
  proximal ADMMs is established in Section \ref{sec4}. Section \ref{sec5} applies the inexact
  indefinite proximal ADMMs for solving the duality of the doubly DNNSDPs with many linear
  equality and/or inequality constraints. Some concluding remarks are given in Section \ref{sec6}.

 \section{Notations and assumption}\label{sec2}

  Notice that the functions $f\!:\mathbb{X}\to\!(-\infty,+\infty]$ and $g\!:\mathbb{Y}\to\!(-\infty,+\infty]$
  are closed proper convex, and the subdifferential mappings of closed proper convex functions are
  maximal monotone \cite[Theorem 12.17]{RW98}. Hence, there exist self-adjoint operators $\Sigma_{\!f}\succeq 0$
  and $\Sigma_g\succeq 0$ such that for all $x,\widetilde{x}\in{\rm dom}\,f,u\in\partial f(x)$
  and $\widetilde{u}\in\partial f(\widetilde{x})$,
  \begin{equation}\label{prop-f}
   f(x)\ge f(\widetilde{x})+\langle \widetilde{u},x-\widetilde{x}\rangle + \frac{1}{2}\|x-\widetilde{x}\|^2_{\Sigma_{f}}
   \ \ {\rm and}\ \ \langle u-\widetilde{u},x-\widetilde{x}\rangle\ge \|x-\widetilde{x}\|_{\Sigma_{\!f}}^2;
  \end{equation}
  and for all $y,\widetilde{y}\in{\rm dom}\,g,v\in\partial g(y)$ and $\widetilde{v}\in\partial g(\widetilde{y})$,
  \begin{equation}\label{prop-g}
   g(x)\ge g(\widetilde{y})+\langle \widetilde{v},y-\widetilde{y}\rangle + \frac{1}{2}\|y-\widetilde{y}\|^2_{\Sigma_{g}}
   \ \ {\rm and}\ \ \langle v-\widetilde{v},y-\widetilde{y}\rangle\ge \|y-\widetilde{y}\|_{\Sigma_g}^2.
  \end{equation}

 For a self-adjoint linear operator $\mathcal{T}\!:\mathbb{X}\to\mathbb{X}$, the notation
 $\mathcal{T}\succeq 0$ (respectively, $\mathcal{T}\succ0$) means that $\mathcal{T}$ is positive
 semidefinite (respectively, positive definite), that is, $\langle x,\mathcal{T}x\rangle\ge 0$
 for all $x\in\mathbb{X}$ (respectively, $\langle x,\mathcal{T}x\rangle>0$ for all $x\in\mathbb{X}\backslash\{0\}$).
 Given a self-adjoint positive semidefinite linear operator $\mathcal{T}\!:\mathbb{X}\to\mathbb{X}$,
 we denote by $\|\cdot\|_{\mathcal{T}}$ the norm induced by $\mathcal{T}$, i.e.,
 \[
   \|x\|_{\mathcal{T}}:=\sqrt{\langle x,\mathcal{T}x\rangle}\quad\ \forall x\in\mathbb{X}.
 \]
 Given a self-adjoint positive definite linear operator, we denote by $\lambda_{\rm max}(\mathcal{T})$ and
 $\lambda_{\rm min}(\mathcal{T})$ the largest eigenvalue and the smallest eigenvalue of $\mathcal{T}$,
 respectively, and by $D_{\mathcal{T}}(x,\Omega)$  the distance induced by $\mathcal{T}$ from $x$
 to a closed set $\Omega$, that is,
  \(
    D_{\mathcal{T}}(x,\Omega):=\min_{z\in\Omega}\|z-x\|_{\mathcal{T}}.
  \)
 When $\mathcal{T}$ is the identity operator, we suppress the notation $\mathcal{T}$ in $D_{\mathcal{T}}(x,\Omega)$
 and write simply $D(x,\Omega)$. Clearly, for any positive definite linear operator
 $\mathcal{T}\!:\mathbb{X}\to\mathbb{X}$ and $\gamma>0$,
  \begin{equation}\label{Cauchy}
    2|\langle u,v\rangle|\le \gamma^{-1}\|u\|_{\mathcal{T}}^2 +\gamma\|v\|_{\mathcal{T}^{-1}}^2
    \quad\ \forall\,u,v\in\mathbb{X}.
 \end{equation}
 In addition, for any $u,v\in\mathbb{X}$ and any self-adjoint linear operator $\mathcal{T}\!:\mathbb{X}\to\mathbb{X}$,
 the following two identities will be frequently used in the subsequent analysis:
  \begin{align}\label{identity}
    2\langle u,\mathcal{T}v\rangle
    &=\langle u,\mathcal{T}u\rangle +\langle v,\mathcal{T}v\rangle
                     -\langle u-v,\mathcal{T}(u-v)\rangle\nonumber\\
    &=\langle u+v,\mathcal{T}(u+v)\rangle-\langle u,\mathcal{T}u\rangle-\langle v,\mathcal{T}v\rangle.
  \end{align}

  Throughout this paper, we make the following assumption for problem \eqref{prob}:
 \begin{assumption}\label{assump}
  Problem \eqref{prob} has an optimal solution, to say $(x^*,y^*)\in{\rm dom}\,f\times{\rm dom}\,g$,
  and there exists a point $(\widehat{x},\widehat{y})\in{\rm ri}({\rm dom}\,f\times{\rm dom}\,g)$
  such that $\mathcal{A}^*\widehat{x}+ \mathcal{B}^*\widehat{y} = b$.
 \end{assumption}

 Under Assumption \ref{assump}, from \cite[Corollary 28.2.2 \& 28.3.1]{Roc70} and \cite[Theorem 6.5 \& 23.8]{Roc70},
 it follows that there exists a Lagrange multiplier $z^*\in\mathbb{Z}$ such that
 \begin{equation}\label{optimal-cond}
  -\!\mathcal{A}z^*\in\partial f(x^*),\ -\mathcal{B}z^*\in\partial g(x^*)\ \ {\rm and}\ \
  \mathcal{A}^*x^*+\mathcal{B}^*y^*-c=0
 \end{equation}
 where $\partial f$ and $\partial g$ are the subdifferential mappings of $f$ and $g$, respectively.
 Moreover, any $z^*\in\mathbb{Z}$ satisfying (\ref{optimal-cond}) is an optimal solution to
 the dual problem of \eqref{prob}. In the sequel, we call $(x^*,y^*,z^*)\in{\rm dom}\,f\times{\rm dom}\,g\times\mathbb{Z}$
 a primal-dual solution pair of problem \eqref{prob}.

 \section{Inexact indefinite proximal ADMMs}\label{sec3}

  In this section, we describe the iteration steps of the inexact indefinite proximal ADMMs
  for solving problem (\ref{prob}), and then analyze the properties of the sequence generated.

  \medskip

  The iteration steps of our inexact indefinite proximal ADMMs are stated as follows.

  \bigskip

  \setlength{\fboxrule}{0.8pt}
  \noindent
  \fbox{
  \parbox{0.96\textwidth}
  {{\bf IEIDP-ADMM (Inexact indefinite proximal ADMM for (\ref{prob}))}\

   \noindent
   \begin{description}
   \item[(S.0)] Let $\sigma,\tau>0$ be given. Choose self-adjoint linear operators
                $\mathcal{P}_{\!f}\!:\mathbb{X}\to\mathbb{X}$ and \hspace*{0.05cm} $\mathcal{P}_{\!g}\!:\mathbb{Y}\to\mathbb{Y}$
                such that $\mathcal{T}_{\!f}\!:=\mathcal{P}_{\!f}+\Sigma_{f}+\sigma\mathcal{A}\mathcal{A}^* \succ 0$ and
                $\mathcal{T}_{\!g}\!:=\mathcal{P}_{\!g}+\Sigma_{g}+\sigma\mathcal{B}\mathcal{B}^* \succ 0$.
                \hspace*{0.05cm} Choose an initial point $(x^0,y^0,z^0)\in{\rm dom}\,f\times{\rm dom}\,g\times\mathbb{Z}$.
                Set $k:=0$.

   \item[(S.1)] Find $x^{k+1}\approx\mathop{\arg\min}_{x\in\mathbb{X}} \phi_k(x):=L_{\sigma}(x,y^k,z^k)+\frac{1}{2}\|x-x^k\|_{\mathcal{P}_{\!f}}^2$.

   \item[(S.2)]  Find $y^{k+1}\approx\mathop{\arg\min}_{y\in\mathbb{Y}} \psi_k(y):=L_{\sigma}(x^{k+1},y,z^k)+\frac{1}{2}\|y-y^k\|_{\mathcal{P}_{\!g}}^2$.

   \item[(S.3)]  Update the Lagrange multiplier $z^{k+1}$ via the following formula
                \[
                  z^{k+1} = z^{k}+\tau\sigma(\mathcal{A}^*x^{k+1}+ \mathcal{B}^*y^{k+1} - c).
                \]
  \item[(S.4)]  Let $k\leftarrow k+1$, and go to Step (S.1).

  \end{description}
   }
   }

  \bigskip
  \noindent
  The approximate optimality in (S.1) and (S.2) is measured by the following criteria:
  \begin{itemize}
   \item[\bf (C1)] $D\big(0,\partial\phi_k(x^{k+1})\big)\!\le\!\mu_{k+1},D\big(0,\partial\psi_k(y^{k+1})\big)\!\le\!\nu_{k+1}$ and
         $\sum_{k=0}^{\infty}\max(\mu_{k+1},\nu_{k+1})<\infty$;
  \end{itemize}
 \begin{itemize}
   \item[\bf (C2)] $D_{\!\mathcal{F}}\big(0,\partial\phi_k(x^{k+1})\big)\!\le\!\mu_{k+1}\|x^{k+1}\!-x^k\|_{\mathcal{T}_{\!f}}$,
                    $D_{\mathcal{G}}\big(0,\partial\psi_k(y^{k+1})\big)\!\le\!\nu_{k+1}\|y^{k+1}\!-y^k\|_{\mathcal{T}_{\!g}}$
                    and $\sum_{k=0}^{\infty}\max(\mu_{k+1},\nu_{k+1})<\infty$, where $\mathcal{F}\!:\mathbb{X}\!\to\mathbb{X}$
                    and $\mathcal{G}\!:\mathbb{Y}\to\mathbb{Y}$ are self-adjoint positive definite linear operators with
                    $\mathcal{F}^{-1}\preceq \mathcal{T}_{\!f}$ and $\mathcal{G}^{-1}\preceq  \mathcal{T}_{\!g}$;
 \end{itemize}
 \begin{itemize}
   \item[\bf (C2')] $D_{\!\mathcal{F}}\big(0,\partial\phi_k(x^{k+1})\big)\!\le\!\mu_{k+1}\|x^{k+1}\!-x^k\|_{\mathcal{T}_{\!f}}$,
                    $D_{\mathcal{G}}\big(0,\partial\psi_k(y^{k+1})\big)\!\le\!\nu_{k+1}\|y^{k+1}\!-y^k\|_{\mathcal{T}_{\!g}}$
                    and $\sum_{k=0}^{\infty}\max(\mu_{k+1}^2,\nu_{k+1}^2)<\infty$, where $\mathcal{F}$
                    and $\mathcal{G}$ are same as the one in (C2).
 \end{itemize}

 Notice that (C1) is an absolute error criterion, while (C2) and (C2') are a relative error criterion.
 Clearly, when the approximate optimality of $x^{k+1}$ and $y^{k+1}$ is measured by (C1),
 (S.1) and (S.2) are equivalent to finding $(x^{k+1},\xi^{k+1})$ and $(y^{k+1},\eta^{k+1})$ such that
  \begin{align}\label{xieta-C1}
 \left\{\begin{array}{c}
  \xi^{k+1}\in\partial\phi_k(x^{k+1}),\ \|\xi^{k+1}\|\le \mu_{k+1}\ \ {\rm with}\ \ \sum_{k=0}^{\infty}\mu_{k+1}\!<\!\infty,\\
  \!\eta^{k+1}\in\partial\psi_k(y^{k+1}),\ \|\eta^{k+1}\|\le \nu_{k+1}\ \ {\rm with}\ \ \sum_{k=0}^{\infty}\nu_{k+1}\!<\!\infty.\\
  \end{array}\right.
 \end{align}
 If the approximate optimality of $x^{k+1}$ and $y^{k+1}$ is measured by (C2) or (C2'),
 (S.1) and (S.2) are equivalent to finding $(x^{k+1},\xi^{k+1})$ and $(y^{k+1},\eta^{k+1})$ such that
 with $p=1$ or $2$,
 \begin{align}\label{xieta-C23}
  \left\{\!\begin{array}{c}
  \xi^{k+1}\in\partial\phi_k(x^{k+1}),\ \|\xi^{k+1}\|_{\mathcal{F}}\le \mu_{k+1}\|x^{k+1}\!-x^k\|_{\mathcal{T}_f}
  \ \ {\rm with}\ \sum_{k=0}^{\infty}\mu_{k+1}^p<\!\infty,\\
  \!\eta^{k+1}\in\partial\psi_k(y^{k+1}),\ \|\eta^{k+1}\|_{\mathcal{G}}\le \nu_{k+1}\|y^{k+1}\!-y^k\|_{\mathcal{T}_g}
  \ \ {\rm with}\ \sum_{k=0}^{\infty}\nu_{k+1}^p<\!\infty.
  \end{array}\right.
 \end{align}

 \vspace{-0.5cm}
 \begin{remark}\label{remark1}
  (a) When the proximal operators $\mathcal{P}_{\!f}$ and $\mathcal{P}_{\!g}$ are chosen as
  $\beta\mathcal{I}$ for a constant $\beta>\!0$ and the step-size $\tau$ is set to be $1$,
  the IEIDP-ADMM with (C1) reduces to the IADM1 in \cite{NWY11}. If, in addition,
  taking $\mathcal{F}=\mathcal{G}=\frac{1}{\beta}\mathcal{I}$, the IEIDP-ADMM with (C2') requires
  \begin{align*}
  \left\{\!\begin{array}{c}
  \xi^{k+1}\in\partial\phi_k(x^{k+1}),\ \|\xi^{k+1}\|\le \mu_{k+1}\sqrt{\beta}\|x^{k+1}\!-x^k\|_{\sigma\mathcal{A}\mathcal{A}^*+\beta\mathcal{I}}
  \ \ {\rm with}\ \sum_{k=0}^{\infty}\mu_{k+1}^2<\!\infty,\\
  \!\eta^{k+1}\in\partial\psi_k(y^{k+1}),\ \|\eta^{k+1}\|\le \nu_{k+1}\sqrt{\beta}\|y^{k+1}\!-y^k\|_{\sigma\mathcal{B}\mathcal{B}^*+\beta\mathcal{I}}
  \ \ {\rm with}\ \sum_{k=0}^{\infty}\nu_{k+1}^2<\!\infty,
  \end{array}\right.
  \end{align*}
  whereas the LADM2 in \cite{NWY11} is actually requiring that $\xi^{k+1}$ and $\eta^{k+1}$ satisfy
  \begin{align*}
  \left\{\!\begin{array}{c}
  \xi^{k+1}\in\partial\phi_k(x^{k+1}),\ \|\xi^{k+1}\|\le \mu_{k+1}\beta\|x^{k+1}\!-x^k\|
  \ \ {\rm with}\ \sum_{k=0}^{\infty}\mu_{k+1}^2<\!\infty,\\
  \!\eta^{k+1}\in\partial\psi_k(y^{k+1}),\ \|\eta^{k+1}\|\le \nu_{k+1}\beta\|y^{k+1}\!-y^k\|
  \ \ {\rm with}\ \sum_{k=0}^{\infty}\nu_{k+1}^2<\!\infty.
  \end{array}\right.
  \end{align*}
  Since $\sqrt{\beta}\|x^{k+1}\!-x^k\|_{\sigma\mathcal{A}\mathcal{A}^*+\beta\mathcal{I}}\ge\beta\|x^{k+1}\!-x^k\|$
  and $\sqrt{\beta}\|y^{k+1}\!-y^k\|_{\sigma\mathcal{B}\mathcal{B}^*+\beta\mathcal{I}}\ge\beta\|y^{k+1}\!-y^k\|$,
  the above inexact criterion (C2') is looser than Criterion 2 used in \cite{NWY11}.

  \medskip
  \noindent
  (b) When $\mathcal{P}_{\!f}$ and $\mathcal{P}_{\!g}$ are chosen to be self-adjoint positive semidefinite operators,
  the IEIDP-ADMMs with $\mu_{k}\equiv\nu_{k}\equiv0$ reduce to the semi-proximal ADMM in \cite{XW11,FPST13}.

  \medskip
  \noindent
  (c) For the self-adjoint positive definite linear operators $\mathcal{F}$ and $\mathcal{G}$
  in (C2) and (C2'), an immediate choice is $\mathcal{F}=\frac{1}{\lambda_{\rm min}(\mathcal{T}_{\!f})}\mathcal{I}$
  and $\mathcal{G}=\frac{1}{\lambda_{\rm min}(\mathcal{T}_{\!g})}\mathcal{I}$.
  Since $\lambda_{\rm min}(\mathcal{T}_{\!f})$ and $\lambda_{\rm min}(\mathcal{T}_{\!g})$
  are easy to estimate, such a choice is convenient for the numerical implementation.
 \end{remark}

 Next we study the properties of the sequence generated by the IEIDP-ADMMs.
 For convenience, we let $h(x,y):=\mathcal{A}^*x+\mathcal{B}^*y-c$ for $(x,y)\in\mathbb{X}\times\mathbb{Y}$,
 and for each $k\ge 1$ write
 \begin{align*}
   x_e^k:=x^k-x^*,\ \ y_e^k:=y^k-y^*,\ \ z_e^k:=z^k-z^*;\qquad\\
    \Delta y^k\!:=y^k-y^{k-1},\ \Delta x^k\!:=x^k-x^{k-1},\
   \Delta z^k\!:=z^k-z^{k-1}.
 \end{align*}
 Using these notations and noting that $h(x^*,y^*)=0$, we can rewrite Step (S.3) as
 \begin{equation}\label{zk}
   z^{k} =z^{k+1}\!-\tau\sigma h(x^{k+1},y^{k+1})=z^{k+1}\!-\tau\sigma(\mathcal{A}^*x_e^{k+1}+\mathcal{B}^*y_e^{k+1}).
 \end{equation}

  \vspace{-0.5cm}
\begin{lemma}\label{Lemma1}
 Let $\big\{(x^{k},y^{k},z^{k})\big\}$ be the sequence generated by the IEIDP-ADMMs with
 $(x^{k},\xi^{k})$ and $(y^{k},\eta^{k})$ satisfying equation \eqref{xieta-C1} or \eqref{xieta-C23}.
 Suppose that Assumption \ref{assump} holds and the operator $\mathcal{P}_{\!g}$ also satisfies
 $\mathcal{P}_{\!g}+\frac{3}{8}\Sigma_{g}\succeq 0$. Then, for all $k\ge 0$ we have
 \begin{align*}
  & (2\!-\!\tau)\sigma\|h(x^{k+1},y^{k+1})\|^2
    +(\tau\sigma)^{-1}\big(\|z_e^{k+1}\|^2\!-\|z_e^k\|^2\big)
    +\|y_e^{k+1}\|^2_{\mathcal{T}_{g}}\!-\|y_e^{k}\|^2_{\mathcal{T}_{g}}
    \nonumber\\
  &  +\|x_e^{k+1}\|^2_{\mathcal{P}_{\!f}+\Sigma_{\!f}}\!-\|x_e^{k}\|^2_{\mathcal{P}_{\!f}
     +\Sigma_{\!f}}+\|\Delta y^{k+1}\|^2_{\mathcal{P}_{\!g}+\frac{3}{4}\Sigma_{g}}
     -\|\Delta y^k\|^2_{\mathcal{P}_{\!g}+\frac{3}{4}\Sigma_{g}}
     \nonumber\\
  &\le 2(1\!-\!\tau)\sigma\langle h(x^k,y^k), \mathcal{B}^*\Delta y^{k+1}\rangle
      +r^{k+1}\!-\|\Delta x^{k+1}\|_{\mathcal{P}_{\!f}+\!\frac{1}{2}\Sigma_{\!f}}^2\!-\!\|\Delta y^{k+1}\|_{\mathcal{T}_g}^2
 \end{align*}
  where $ r^{k+1}\!:=2\langle x_e^{k+1},\xi^{k+1}\rangle+ 2\langle y_e^{k+1},\eta^{k+1}\rangle
       +2\langle \eta^{k+1}\!-\!\eta^{k}, \Delta y^{k+1}\rangle$.
 \end{lemma}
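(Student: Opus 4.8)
The plan is to derive the inequality from the approximate optimality conditions of the two subproblems together with the multiplier update, using convexity of $f$ and $g$ in the sharp form \eqref{prop-f}--\eqref{prop-g}. First I would write out the subgradient inclusions. From Step (S.1) and $\xi^{k+1}\in\partial\phi_k(x^{k+1})$ we get
\[
  \xi^{k+1}-\mathcal{A}z^k-\sigma\mathcal{A}h(x^{k+1},y^k)-\mathcal{P}_{\!f}\Delta x^{k+1}\in\partial f(x^{k+1}),
\]
and similarly from (S.2) and $\eta^{k+1}\in\partial\psi_k(y^{k+1})$,
\[
  \eta^{k+1}-\mathcal{B}z^k-\sigma\mathcal{B}h(x^{k+1},y^{k+1})-\mathcal{P}_{\!g}\Delta y^{k+1}\in\partial g(y^{k+1}).
\]
Recalling $z^{k+1}=z^k+\tau\sigma h(x^{k+1},y^{k+1})$, I would re-express the $\sigma\mathcal{A}h$ and $\sigma\mathcal{B}h$ terms in terms of $z^{k+1}$ (and a leftover $(1-\tau)\sigma$ multiple of $h$, which is the source of the $2(1-\tau)\sigma\langle h(x^k,y^k),\mathcal{B}^*\Delta y^{k+1}\rangle$ term after also accounting for $\mathcal{A}h(x^{k+1},y^k)$ versus $\mathcal{A}h(x^{k+1},y^{k+1})$).

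Next I would apply the first inequality in \eqref{prop-f} at the pair $(x^{k+1},x^*)$ using the optimality condition $-\mathcal{A}z^*\in\partial f(x^*)$, and likewise the monotonicity-type estimate for $g$; the plan is to use the strong monotonicity form $\langle u-\widetilde u, x-\widetilde x\rangle\ge\|x-\widetilde x\|_{\Sigma_f}^2$ for one pairing and the subgradient inequality for another, so as to produce the $\Sigma_{\!f}$ and $\Sigma_g$ weighted terms on the left-hand side with the right coefficients ($\mathcal{P}_{\!f}+\Sigma_{\!f}$ on $\|x_e^{k+1}\|^2$, $\mathcal{P}_{\!f}+\tfrac12\Sigma_{\!f}$ on $\|\Delta x^{k+1}\|^2$, and the $\tfrac34$ and $\tfrac38$ fractions of $\Sigma_g$). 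Summing the two inclusion-derived inequalities and the $z$-update contribution gives a master inequality in which I would then convert every cross term of the form $2\langle a,\mathcal{T}b\rangle$ into squared-norm differences via the identities \eqref{identity}: the $z$ terms yield $(\tau\sigma)^{-1}(\|z_e^{k+1}\|^2-\|z_e^k\|^2)$ plus the $(2-\tau)\sigma\|h(x^{k+1},y^{k+1})\|^2$ term (this is where $\tau<(1+\sqrt5)/2$ — actually $\tau<2$ at this stage — is implicitly used for sign control later), and the $\mathcal{P}_{\!f},\mathcal{P}_{\!g}$ terms yield the telescoping $\|x_e^{k+1}\|^2_{\mathcal{P}_{\!f}}-\|x_e^k\|^2_{\mathcal{P}_{\!f}}$, $\|y_e^{k+1}\|^2_{\mathcal{T}_g}-\|y_e^k\|^2_{\mathcal{T}_g}$ pieces together with $-\|\Delta x^{k+1}\|^2$ and $-\|\Delta y^{k+1}\|^2$ penalties. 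The terms $2\langle x_e^{k+1},\xi^{k+1}\rangle$, $2\langle y_e^{k+1},\eta^{k+1}\rangle$ are collected directly into $r^{k+1}$, together with the $2\langle\eta^{k+1}-\eta^k,\Delta y^{k+1}\rangle$ term that must be introduced to handle the mismatch between $\sigma\mathcal{B}h(x^{k+1},y^{k+1})$ appearing in $\partial g(y^{k+1})$ and the analogous quantity at step $k$ when telescoping the $y$-related cross terms.

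The main obstacle, as usual for ADMM-type estimates, is the careful bookkeeping of the $y$-subproblem: the augmented-Lagrangian gradient in (S.2) involves $h(x^{k+1},y^{k+1})$ (not $h(x^{k+1},y^k)$), so after substituting the multiplier update one is left with cross terms coupling $\Delta y^{k+1}$, $h(x^k,y^k)$, $\mathcal{B}^*\Delta y^{k+1}$, and $\Sigma_g$-quantities; extracting exactly the stated $\tfrac34\Sigma_g$ on $\|\Delta y^{k+1}\|^2$ (with a $\tfrac38\Sigma_g$ slack absorbed into the hypothesis $\mathcal{P}_{\!g}+\tfrac38\Sigma_g\succeq0$) requires splitting $\Sigma_g$-weighted inner products with tuned coefficients $\tfrac12$ and $\tfrac18$ via \eqref{identity}. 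I would handle this by first writing the raw inequality with the free cross term $2\langle z^{k+1}-z^k + (\text{stuff}),\mathcal{B}^*\Delta y^{k+1}\rangle$ intact, then isolating the $(1-\tau)\sigma h(x^k,y^k)$ contribution (keeping it on the right as in the statement rather than bounding it, since Lemma~\ref{Lemma1} is an identity-like intermediate result), and finally distributing the $\Sigma_g$ terms. Everything else — the $f$-side telescoping, the $z$-side identity, and the collection of error terms into $r^{k+1}$ — is routine once the $y$-side is organized correctly.
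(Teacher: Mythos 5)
Your plan follows essentially the same route as the paper's proof: the approximate optimality inclusions for both subproblems, substitution of the multiplier update, strong monotonicity of $\partial f$ and $\partial g$ against the saddle point, the comparison of consecutive $y$-subproblem optimality conditions (which is exactly where the paper's $2\langle\eta^{k+1}-\eta^k,\Delta y^{k+1}\rangle$ term and the $\frac{3}{4}\Sigma_g$/$\frac{3}{8}\Sigma_g$ bookkeeping arise), and the polarization identities \eqref{identity} to telescope the cross terms. The proposal is correct in outline and matches the paper's argument, including the observation that $\tau<2$ plays no role in this lemma and is only needed later for sign control.
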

 \begin{proof}
  From the expressions of $\phi_k$ and $\psi_k$ and equations \eqref{xieta-C1} and \eqref{xieta-C23}, it follows that
  \begin{align}\label{opt-x}
   \xi^{k+1}-\mathcal{A}z^{k}-\sigma\mathcal{A}(\mathcal{A}^*x^{k+1}+ \mathcal{B}^*y^{k}\!-\! c)
                                   -\mathcal{P}_{\!f}\Delta x^{k+1} &\in \partial f(x^{k+1}),\\
   \eta^{k+1}-\mathcal{B}z^{k}-\sigma\mathcal{B}(\mathcal{A}^*x^{k+1}+ \mathcal{B}^*y^{k+1}\!-\!c)
                                      -\mathcal{P}_{\!g}\Delta y^{k+1} &\in \partial g(y^{k+1}).
   \label{opt-y}
  \end{align}
  Substituting the first identity in \eqref{zk} into equations (\ref{opt-x}) and (\ref{opt-y}) respectively yields
 \begin{align*}
  & (\tau\!-\!1)\sigma\mathcal{A}h(x^{k+1},y^{k+1})
    \!-\!\mathcal{A}z^{k+1}\!+\!\sigma\mathcal{A}\mathcal{B}^*\Delta y^{k+1}
    -\mathcal{P}_{\!f}\Delta x^{k+1}\!+\xi^{k+1}\!\in\!\partial f(x^{k+1}),\nonumber\\
  &\qquad (\tau\!-\!1)\sigma\mathcal{B}h(x^{k+1},y^{k+1})
  -\mathcal{B}z^{k+1}-\mathcal{P}_{\!g}\Delta y^{k+1}+\eta^{k+1}\in \partial g(y^{k+1}).\nonumber
 \end{align*}
  In view of inequalities \eqref{prop-f} and \eqref{prop-g},
  from the last two inclusions and equation \eqref{optimal-cond} we have
  \begin{align*}
   \big\langle x_e^{k+1},(\tau\!-\!1)\sigma\mathcal{A}h(x^{k+1},y^{k+1})
    \!-\!\mathcal{A}z_e^{k+1}\!+\!\sigma\mathcal{A}\mathcal{B}^*\Delta y^{k+1}
    \!-\mathcal{P}_{\!f}\Delta x^{k+1}\!+\xi^{k+1}\big\rangle\!\ge \|x_e^{k+1}\|_{\Sigma_{\!f}}^2,\\
    \big\langle y_e^{k+1}, (\tau\!-\!1)\sigma\mathcal{B}h(x^{k+1},y^{k+1})-\mathcal{B}z_e^{k+1}
    \!-\mathcal{P}_{\!g}\Delta y^{k+1}\!+\eta^{k+1}\big\rangle\ge \|y_e^{k+1}\|_{\Sigma_{g}}^2.\qquad\quad
  \end{align*}
  Adding the last two inequalities together and using equation \eqref{zk} yields that
 \begin{align}\label{temp-equa20}
  & (\tau\!-\!1)\sigma\|h(x^{k+1},y^{k+1})\|^2-(\tau\sigma)^{-1}\langle \Delta z^{k+1},z_e^{k+1}\rangle
    + \sigma\langle h(x^{k+1},y^{k+1}), \mathcal{B}^*\Delta y^{k+1}\rangle\\
  &  -\langle x_e^{k+1},\mathcal{P}_{\!f}\Delta x^{k+1}\!-\xi^{k+1}\rangle
   -\langle y_e^{k+1},(\mathcal{P}_{\!g}\!+\!\sigma\mathcal{B}\mathcal{B}^*)\Delta y^{k+1}\!-\eta^{k+1}\rangle
  \ge \|x_e^{k+1}\|_{\Sigma_{\!f}}^2 \!+\!\|y_e^{k+1}\|_{\Sigma_{g}}^2.\nonumber
 \end{align}
 Next we deal with the term $\sigma\langle h(x^{k+1},y^{k+1}), \mathcal{B}^*\Delta y^{k+1}\rangle$
 in inequality \eqref{temp-equa20}. Notice that
 \begin{align}\label{temp-equa21}
  \sigma\langle h(x^{k+1},y^{k+1}),\mathcal{B}^*\Delta y^{k+1}\rangle
   &= (1\!-\!\tau)\sigma\langle h(x^{k+1},y^{k+1})\!-\!h(x^k,y^k),\mathcal{B}^*\Delta y^{k+1}\rangle\\
   &\quad +\langle \Delta z^{k+1},\mathcal{B}^*\Delta y^{k+1}\rangle
    +(1\!-\!\tau)\sigma\langle h(x^{k},y^{k}), \mathcal{B}^*\Delta y^{k+1}\rangle.\nonumber
 \end{align}
  We first bound the first two terms in \eqref{temp-equa20}. From equations \eqref{opt-y} and \eqref{zk},
  it follows that
 \begin{align*}
  -\mathcal{B}z^{k+1}+(\tau\!-\!1)\sigma\mathcal{B}h(x^{k+1},y^{k+1})
   -\mathcal{P}_{\!g}\Delta y^{k+1}+\eta^{k+1}\in \partial g(y^{k+1}), \\
  -\mathcal{B}z^{k}+(\tau\!-\!1)\sigma\mathcal{B}h(x^{k},y^{k})
  -\mathcal{P}_{\!g}\Delta y^{k}+\eta^{k}\in \partial g(y^{k}).\qquad\quad
 \end{align*}
 Combining the last two inclusions with the second inequality in \eqref{prop-g} yields that
 \begin{align}\label{temp-equa22}
  & (\tau\!-\!1)\sigma\big\langle h(x^{k+1},y^{k+1})-h(x^{k},y^{k}),\mathcal{B}^*\Delta y^{k+1}\big\rangle
  -\langle \Delta z^{k+1},  \mathcal{B}^*\Delta y^{k+1}\rangle \nonumber\\
  & -\!\langle \Delta y^{k+1}\!-\!\Delta y^k, \mathcal{P}_{\!g}\Delta y^{k+1}\rangle
   +\langle \eta^{k+1}\!-\!\eta^{k},\Delta y^{k+1}\rangle\ge\|\Delta y^{k+1}\|_{\Sigma_{g}}^2.
   \end{align}
  Using equation \eqref{identity} and the given assumption $\mathcal{P}_{\!g}+\frac{3}{8}\Sigma_{g} \succeq 0$,
  we have that
 \begin{align}\label{temp-equa23}
  \langle \Delta y^k\!-\Delta y^{k+1}, \mathcal{P}_{\!g}\Delta y^{k+1}\rangle
  &= \frac{1}{2}\|y^{k+1}\!-\!y^ {k-1}\|^2_{\mathcal{P}_{\!g}} - \frac{1}{2}\|\Delta y^k\|^2_{\mathcal{P}_{\!g}}
     -\frac{1}{2}\|\Delta y^{k+1}\|^2_{\mathcal{P}_{\!g}}-\|\Delta y^{k+1}\|^2_{\mathcal{P}_{g}} \nonumber\\
  &\le \frac{1}{2}\|y^{k+1}\!-\!y^{k-1}\|^2_{\mathcal{P}_{\!g}+\frac{3}{8}\Sigma_{g}} - \frac{1}{2}\|\Delta y^k\|^2_{\mathcal{P}_{\!g}}
      -\frac{3}{2}\|\Delta y^{k+1}\|^2_{\mathcal{P}_{\!g}} \nonumber\\
  &\le \frac{1}{2}\|\Delta y^k\|^2_{\mathcal{P}_{\!g}+\frac{3}{4}\Sigma_{g}}
      -\frac{1}{2}\|\Delta y^{k+1}\|^2_{\mathcal{P}_{\!g}+\frac{3}{4}\Sigma_{g}}+\frac{3}{4}\|\Delta y^{k+1}\|^2_{\Sigma_{g}}
 \end{align}
 where the last inequality is using
 \(
  \frac{1}{2}\big\|y^{k+1}\!- y^{k-1}\big\|^2_{\mathcal{P}_{\!g}+\frac{3}{8}\Sigma_{g}}
  \!\le\!\|\Delta y^k\|^2_{\mathcal{P}_{\!g}+\frac{3}{8}\Sigma_{g}}
     +\|\Delta y^{k+1}\|^2_{\mathcal{P}_{\!g}+\frac{3}{8}\Sigma_{g}}.
 \)
 Combining inequalities \eqref{temp-equa23} and \eqref{temp-equa22} with equation \eqref{temp-equa21},
 we immediately obtain
 \begin{align}\label{temp-ineq24}
  \sigma\langle h(x^{k+1},y^{k+1}),\mathcal{B}^*\Delta y^{k+1}\rangle
  &\le (1\!-\!\tau)\sigma\langle h(x^k,y^k), \mathcal{B}^*\Delta y^{k+1}\rangle
    +\langle \eta^{k+1}\!-\!\eta^{k},\Delta y^{k+1}\rangle \nonumber\\
  &\quad \!+\frac{1}{2}\|\Delta y^k\|^2_{\mathcal{P}_{\!g}+\frac{3}{4}\Sigma_{g}}
             \!-\frac{1}{2}\|\Delta y^{k+1}\|^2_{\mathcal{P}_{\!g}+\frac{3}{4}\Sigma_{g}}
             \!-\!\frac{1}{4}\|\Delta y^{k+1}\|^2_{\Sigma_g}\!.
 \end{align}
 Now substituting inequality \eqref{temp-ineq24} into equation \eqref{temp-equa20},
 we immediately obtain that
 \begin{align}\label{temp-equa24}
 & (\tau\!-\!1)\sigma\|h(x^{k+1},y^{k+1})\|^2-(\tau\sigma)^{-1}\langle \Delta z^{k+1},z_e^{k+1}\rangle
    - \langle x_e^{k+1},\mathcal{P}_{\!f}\Delta x^{k+1}\rangle\nonumber\\
 &  +(1\!-\!\tau)\sigma\langle h(x^{k},y^{k}), \mathcal{B}^*\Delta y^{k+1}\rangle
    +\frac{1}{2}\|\Delta y^k \|^2_{\mathcal{P}_{\!g}+\frac{3}{4}\Sigma_{g}}
     \!-\frac{1}{2}\|\Delta y^{k+1}\|^2_{\mathcal{P}_{\!g}+\frac{3}{4}\Sigma_{g}}\nonumber\\
 & - \langle y_e^{k+1},(\mathcal{P}_{\!g}\!+\!\sigma\mathcal{B}\mathcal{B}^*)\Delta y^{k+1}\rangle
    +\frac{1}{2}r^{k+1}
  \ge \|x_e^{k+1}\|_{\Sigma_{\!f}}^2 + \|y_e^{k+1}\|_{\Sigma_{g}}^2 + \frac{1}{4}\|\Delta y^{k+1}\|^2_{\Sigma_g}.
 \end{align}
 By the first equality of \eqref{identity} and equation \eqref{zk}, the term $\langle \Delta z^{k+1},z_e^{k+1}\rangle$
 can be written as
 \[
   \langle \Delta z^{k+1},z_e^{k+1}\rangle = \frac{1}{2}\|z_e^{k+1}\|^2-\frac{1}{2}\|z_e^k\|^2+\frac{(\tau\sigma)^2}{2}\|h(x^{k+1},y^{k+1})\|^2.
 \]
 Applying equation \eqref{identity} to $\langle x_e^{k+1},\mathcal{P}_{\!f}\Delta x^{k+1}\rangle$
 and $\langle y_e^{k+1},(\mathcal{P}_{\!g}\!+\sigma\mathcal{B}\mathcal{B}^*)\Delta y^{k+1}\rangle$ yields
 \begin{align*}
 &\langle x_e^{k+1},\mathcal{P}_{\!f}\Delta x^{k+1}\rangle
   =\frac{1}{2}\|x_e^{k+1}\|_{\mathcal{P}_{\!f}}^2-\frac{1}{2}\|x_e^k\|_{\mathcal{P}_{\!f}}^2
    +\frac{1}{2}\|\Delta x^{k+1}\|_{\mathcal{P}_{\!f}}^2,\\
  &\langle y_e^{k+1},(\mathcal{P}_{\!g}\!+\sigma\mathcal{B}\mathcal{B}^*)\Delta y^{k+1}\rangle
   =\frac{1}{2}\|y_e^{k+1}\|_{\mathcal{P}_{\!g}\!+\sigma\mathcal{B}\mathcal{B}^*}^2
   \!-\!\frac{1}{2}\|y_e^k\|_{\mathcal{P}_{\!g}\!+\sigma\mathcal{B}\mathcal{B}^*}^2
    \!+\! \frac{1}{2}\|\Delta y^{k+1}\|_{\mathcal{P}_{\!g}\!+\sigma\mathcal{B}\mathcal{B}^*}^2.
 \end{align*}
 Substituting the last three equalities into inequality \eqref{temp-equa24}, we have that
 \begin{align*}
  & (\tau\!-\!2)\sigma\|h(x^{k+1},y^{k+1})\|^2+(\tau\sigma)^{-1}\big(\|z_e^k\|^2\!-\!\|z_e^{k+1}\|^2\big)
     + \big(\|y_e^{k}\|^2_{\mathcal{T}_{\!g}}\!-\!\|y_e^{k+1}\|^2_{\mathcal{T}_{g}}\big)
   \nonumber\\
  & +\big(\|x_e^{k}\|^2_{\mathcal{P}_{\!f}+\Sigma_{\!f}}\!-\!\|x_e^{k+1}\|^2_{\mathcal{P}_{\!f}+\Sigma_{\!f}}\big)
    +\big(\|\Delta y^k\|^2_{\mathcal{P}_{\!g}+\frac{3}{4}\Sigma_{g}}
    \!-\|\Delta y^{k+1}\|^2_{\mathcal{P}_{\!g}+\frac{3}{4}\Sigma_{g}}\big)
     \nonumber\\
  &      + 2(1\!-\!\tau)\sigma\langle h(x^k,y^k), \mathcal{B}^*\Delta y^{k+1}\rangle +r^{k+1}-\|\Delta y^{k+1}\|_{\mathcal{T}_g}^2
     \nonumber\\
  &\ge \|x_e^{k+1}\|_{\Sigma_{\!f}}^2+\|x_e^{k}\|_{\Sigma_{\!f}}^2 +\|\Delta x^{k+1}\|_{\mathcal{P}_{\!f}}^2
       +\|y_e^{k+1}\|_{\Sigma_{g}}^2+\|y_e^{k}\|_{\Sigma_{g}}^2 -\frac{1}{2}\|\Delta y^{k+1}\|_{\Sigma_g}^2.\nonumber
 \end{align*}
 Notice that
 \(
 \|x_e^{k+1}\|_{\Sigma_{f}}^2\!+\!\|x_e^{k}\|_{\Sigma_{f}}^2 \geq \frac{1}{2}\|\Delta x^{k+1}\|_{\Sigma_{f}}^2
 \)
 and
 \(
  \|y_e^{k+1}\|_{\Sigma_{g}}^2\!+\!\|y_e^{k}\|_{\Sigma_{g}}^2 \geq \frac{1}{2}\|\Delta y^{k+1}\|_{\Sigma_{g}}^2.
 \)
 The last inequality implies the desired result. The proof is completed.
 \end{proof}

 \medskip

 The following lemma provides an upper bound for the term $r^{k+1}$ in Lemma \ref{Lemma1}.
 \begin{lemma}\label{lemma2}
  If (C1) is used in (S.1) and (S.2), then for any given $\gamma>0$ we have
  \begin{align}\label{cross-C1}
   |r^{k+1}|&\le \gamma^{-1}\big(\mu_{k+1}\|x_e^{k+1}\|_{\mathcal{T}_f}^2\!+\!\nu_{k+1}\|y_e^{k+1}\|_{\mathcal{T}_g}^2\big)
       +\gamma^{-1}(\nu_{k+1}\!+\nu_k)\|\Delta y^{k+1}\|^2_{\mathcal{T}_{g}}\nonumber\\
  &\quad +\gamma\|\mathcal{T}_{f}^{-1}\|\mu_{k+1}
         +\gamma\|\mathcal{T}_{g}^{-1}\|(2\nu_{k+1}\!+\nu_{k})\quad\ {\rm for}\ k\ge 1;
  \end{align}
   if the criterion (C2) is used for the minimization in (S.1) and (S.2), then
  \begin{align}\label{cross-C2}
  |r^{k+1}|&\le \mu_{k+1}\|x_e^{k+1}\|_{\mathcal{T}_f}^2+\nu_{k+1}\|y_e^{k+1}\|_{\mathcal{T}_g}^2
       +\mu_{k+1}\|\Delta x^{k+1}\|^2_{\mathcal{T}_{f}}\nonumber\\
  &\quad +(\nu_k+3\nu_{k+1})\|\Delta y^{k+1}\|^2_{\mathcal{T}_{g}}
         +\nu_k\|\Delta y^{k}\|^2_{\mathcal{T}_{g}}\quad\ {\rm for}\ k\ge 1;
  \end{align}
  and if (C2') is used for the minimization in (S.1) and (S.2), then for any given $\gamma>0$,
  \begin{align}\label{cross-C3}
  |r^{k+1}|&\le \gamma\big(\mu_{k+1}^2\|x_e^{k+1}\|_{\mathcal{T}_f}^2+\nu_{k+1}^2\|y_e^{k+1}\|_{\mathcal{T}_g}^2\big)
       +\gamma^{-1}\|\Delta x^{k+1}\|^2_{\mathcal{T}_{f}}\nonumber\\
  &\quad +\!\big(2\gamma^{-1}\!+\gamma\nu^2_{k}\!+\gamma\nu^2_{k+1}\big)\|\Delta y^{k+1}\|^2_{\mathcal{T}_{g}}
         \!+\!\gamma^{-1}\|\Delta y^{k}\|^2_{\mathcal{T}_{g}}\ \ {\rm for}\ k\ge 1.
  \end{align}
 \end{lemma}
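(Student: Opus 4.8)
The plan is to break $r^{k+1}=2\langle x_e^{k+1},\xi^{k+1}\rangle+2\langle y_e^{k+1},\eta^{k+1}\rangle+2\langle\eta^{k+1}-\eta^k,\Delta y^{k+1}\rangle$ into its three inner products, estimate each one by a weighted Cauchy--Schwarz inequality, and then feed in the relevant inexactness criterion. For the last inner product I would first apply the triangle inequality $\|\eta^{k+1}-\eta^k\|\le\|\eta^{k+1}\|+\|\eta^k\|$ — or, for (C2) and (C2'), its $\mathcal{G}$-norm version $\|\eta^{k+1}-\eta^k\|_{\mathcal{G}}\le\|\eta^{k+1}\|_{\mathcal{G}}+\|\eta^k\|_{\mathcal{G}}$ — so that everything is expressed through $\nu_{k+1}$ and $\nu_k$; this is why the bound is asserted only for $k\ge1$, since $\eta^k$ and $\Delta y^k$ must be available.

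Under (C1) I would apply the elementary inequality \eqref{Cauchy} to the three inner products with $\mathcal{T}=\mu_{k+1}\mathcal{T}_{\!f}$, $\mathcal{T}=\nu_{k+1}\mathcal{T}_{\!g}$ and $\mathcal{T}=(\nu_{k+1}+\nu_k)\mathcal{T}_{\!g}$ respectively (if $\mu_{k+1}$ or $\nu_{k+1}$ vanishes, the corresponding $\xi^{k+1}$ or $\eta^{k+1}$ is $0$ and that term simply drops). This already produces the weighted error-terms $\gamma^{-1}\mu_{k+1}\|x_e^{k+1}\|_{\mathcal{T}_f}^2$, $\gamma^{-1}\nu_{k+1}\|y_e^{k+1}\|_{\mathcal{T}_g}^2$ and $\gamma^{-1}(\nu_{k+1}+\nu_k)\|\Delta y^{k+1}\|_{\mathcal{T}_g}^2$, together with residuals of the type $\gamma\mu_{k+1}^{-1}\|\xi^{k+1}\|_{\mathcal{T}_f^{-1}}^2$ and $\gamma(\nu_{k+1}+\nu_k)^{-1}\|\eta^{k+1}-\eta^k\|_{\mathcal{T}_g^{-1}}^2$; bounding $\|\xi^{k+1}\|_{\mathcal{T}_f^{-1}}^2\le\|\mathcal{T}_{\!f}^{-1}\|\,\|\xi^{k+1}\|^2\le\|\mathcal{T}_{\!f}^{-1}\|\mu_{k+1}^2$ and analogously for $\eta$, these residuals collapse to $\gamma\|\mathcal{T}_{\!f}^{-1}\|\mu_{k+1}$ and to $\gamma\|\mathcal{T}_{\!g}^{-1}\|\nu_{k+1}+\gamma\|\mathcal{T}_{\!g}^{-1}\|(\nu_{k+1}+\nu_k)=\gamma\|\mathcal{T}_{\!g}^{-1}\|(2\nu_{k+1}+\nu_k)$, which gives \eqref{cross-C1}.

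Under (C2) and (C2') the starting observation is that $\mathcal{F}^{-1}\preceq\mathcal{T}_{\!f}$ yields $\|u\|_{\mathcal{F}^{-1}}\le\|u\|_{\mathcal{T}_{\!f}}$ for every $u$, so that $|\langle x_e^{k+1},\xi^{k+1}\rangle|\le\|x_e^{k+1}\|_{\mathcal{F}^{-1}}\|\xi^{k+1}\|_{\mathcal{F}}\le\|x_e^{k+1}\|_{\mathcal{T}_f}\|\xi^{k+1}\|_{\mathcal{F}}$, and likewise with $\mathcal{G},\mathcal{T}_{\!g}$ for the $y$-inner-products. Inserting the relative bounds $\|\xi^{k+1}\|_{\mathcal{F}}\le\mu_{k+1}\|\Delta x^{k+1}\|_{\mathcal{T}_f}$, $\|\eta^{k+1}\|_{\mathcal{G}}\le\nu_{k+1}\|\Delta y^{k+1}\|_{\mathcal{T}_g}$ and $\|\eta^k\|_{\mathcal{G}}\le\nu_k\|\Delta y^k\|_{\mathcal{T}_g}$ turns every inner product into a product of two $\mathcal{T}$-norms; then one finishes with $2ab\le a^2+b^2$ for (C2) and with $2ab\le\gamma a^2+\gamma^{-1}b^2$ for (C2'), in each case assigning the scalar factor ($\mu_{k+1},\nu_{k+1},\nu_k$, resp.\ $\mu_{k+1}^2,\nu_{k+1}^2,\nu_k^2$) to the square that carries the error-coefficient. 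Summing the three estimates and collecting the $\|\Delta y^{k+1}\|_{\mathcal{T}_g}^2$ contributions — which pick up $\nu_{k+1}$ from the second inner product and $2\nu_{k+1}+\nu_k$ from the third, hence $\nu_k+3\nu_{k+1}$ for (C2), and $2\gamma^{-1}+\gamma\nu_{k+1}^2+\gamma\nu_k^2$ for (C2') — delivers \eqref{cross-C2} and \eqref{cross-C3}.

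There is no genuine obstacle here; the whole argument is a careful but routine bookkeeping exercise in weighted Cauchy--Schwarz. The points that deserve attention are the correct placement of the weights $\mu_{k+1},\nu_{k+1},\nu_k$ so that each error-coefficient ends up multiplying the intended norm-square, the harmless degenerate case $\mu_{k+1}=0$ or $\nu_{k+1}=0$ in (C1), and the repeated use of the order-reversing equivalence between $\mathcal{F}^{-1}\preceq\mathcal{T}_{\!f}$ and $\mathcal{F}\succeq\mathcal{T}_{\!f}^{-1}$ (and its $\mathcal{G}$-analogue) to pass between the $\mathcal{F}$-norm of a residual and the $\mathcal{T}_{\!f}$-norm of an error.
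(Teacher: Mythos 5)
Your argument is correct and follows essentially the same route as the paper: split $r^{k+1}$ into its three inner products, apply the weighted Cauchy inequality \eqref{Cauchy} to each (with the weights $\mu_{k+1},\nu_{k+1},\nu_k$ placed exactly as the paper does, and the degenerate cases $\mu_{k+1}=0$ or $\nu_{k+1}=0$ handled separately), then insert the relevant inexactness bound and the relations $\mathcal{F}^{-1}\preceq\mathcal{T}_{\!f}$, $\mathcal{G}^{-1}\preceq\mathcal{T}_{\!g}$. Your bookkeeping of the coefficients ($2\nu_{k+1}+\nu_k$ for (C1), $\nu_k+3\nu_{k+1}$ for (C2), $2\gamma^{-1}+\gamma\nu_k^2+\gamma\nu_{k+1}^2$ for (C2')) reproduces the paper's constants exactly.
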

 \begin{proof}
  When the criterion (C1) is used in (S.1) and (S.2), for any given $\gamma>0$ we have
  \begin{align}\label{xexik}
   2\big|\langle x_e^{k+1},\xi^{k+1}\rangle\big|
    \le \frac{\mu_{k+1}}{\gamma}\|x_e^{k+1}\|_{\mathcal{T}_{\!f}}^2\!+\!\gamma\|\mathcal{T}_{\!f}^{-1}\|\mu_{k+1},\qquad\qquad\quad\nonumber\\
   2\big|\langle y_e^{k+1},\eta^{k+1}\rangle\big|
    \le \frac{\nu_{k+1}}{\gamma}\|y_e^{k+1}\|_{\mathcal{T}_{g}}^2\!+\!\gamma\|\mathcal{T}_{\!g}^{-1}\|\nu_{k+1},\qquad\qquad\quad\\
    2\big|\langle\eta^{k+1}\!-\!\eta^k,\Delta y^{k+1}\rangle\big|
     \le\frac{\nu_{k+1}\!+\nu_{k}}{\gamma}\|\Delta y^{k+1}\|^2_{\mathcal{T}_{g}}
      +\gamma\|\mathcal{T}_{\!g}^{-1}\|(\nu_{k+1}\!+\nu_{k})\nonumber
  \end{align}
  for all $k\ge 1$. Indeed, when $\mu_{k+1}=0$, the first inequality in \eqref{xexik} holds since now $\xi^{k+1}=0$;
  and when $\mu_{k+1}>0$, from equations \eqref{Cauchy} and \eqref{xieta-C1} we have
  \[
    2\big|\langle x_e^{k+1},\xi^{k+1}\rangle\big|
    \le \frac{\mu_{k+1}}{\gamma}\|x_e^{k+1}\|_{\mathcal{T}_{\!f}}^2\!+\!\frac{\gamma}{\mu_{k+1}}\|\mathcal{T}_{\!f}^{-1}\|\|\xi^{k+1}\|^2
    \le \frac{\mu_{k+1}}{\gamma}\|x_e^{k+1}\|_{\mathcal{T}_{\!f}}^2\!+\!\gamma\|\mathcal{T}_{\!f}^{-1}\|\mu_{k+1}.
  \]
  Similarly, we can prove that the last two inequalities hold for all $k\ge 1$.
  Adding the three inequalities in \eqref{xexik} yields \eqref{cross-C1}.
  When (C2) is used in (S.1) and (S.2), for all $k\ge 1$
  \begin{align}\label{etayk}
   2\big|\langle x_e^{k+1},\xi^{k+1}\rangle\big|
      &\le \mu_{k+1}\|x_e^{k+1}\|_{\mathcal{T}_{\!f}}^2\!+\!\mu_{k+1}\|\Delta x^{k+1}\|_{\mathcal{T}_{\!f}}^2,\nonumber\\
   2\big|\langle y_e^{k+1},\eta^{k+1}\rangle\big|
    &\le \nu_{k+1}\|y_e^{k+1}\|_{\mathcal{T}_{\!g}}^2\!+\!\nu_{k+1}\|\Delta y^{k+1}\|_{\mathcal{T}_{\!g}}^2,\\
   2\big|\langle\eta^{k+1}\!-\!\eta^k,\Delta y^{k+1}\rangle\big|
    &\le (\nu_{k}+2\nu_{k+1})\|\Delta y^{k+1}\|^2_{\mathcal{T}_{\!g}}+\nu_k\|\Delta y^{k}\|^2_{\mathcal{T}_{g}}.\nonumber
 \end{align}
  Indeed, when $\nu_{k+1}=0$, the second inequality in \eqref{etayk} holds since now $\eta^{k+1}=0$;
  and when $\nu_{k+1}\ne 0$, from equation \eqref{Cauchy} and
  $\mathcal{F}^{-1}\preceq\mathcal{T}_{\!f}$ and $\mathcal{G}^{-1}\preceq\mathcal{T}_{\!g}$, it follows that
 \[
   2\big|\langle y_e^{k+1},\eta^{k+1}\rangle\big|
   \le \nu_{k+1}\|y_e^{k+1}\|_{\mathcal{G}^{-1}}^2\!+\!\frac{1}{\nu_{k+1}}\|\eta^{k+1}\|_{\mathcal{G}}^2
   \le \nu_{k+1}\|y_e^{k+1}\|_{\mathcal{T}_{\!g}}^2\!+\!\nu_{k+1}\|\Delta y^{k+1}\|_{\mathcal{T}_{\!g}}^2.
 \]
 Similarly, we can prove that another two inequalities hold for all $k\ge 1$.
 Summing up the three inequalities in \eqref{etayk} yields \eqref{cross-C2}.
  When the criterion (C2') is used, for any $\gamma>0$,
 \begin{align}\label{C2-etayk}
  &2\big|\langle x_e^{k+1},\xi^{k+1}\rangle\big|
    \le \gamma\mu_{k+1}^2\|x_e^{k+1}\|_{\mathcal{T}_{f}}^2\!+\!\frac{1}{\gamma}\|\Delta x^{k+1}\|_{\mathcal{T}_{f}}^2,\nonumber\\
  & 2\big|\langle y_e^{k+1},\eta^{k+1}\rangle\big|
   \le \gamma\nu_{k+1}^2\|y_e^{k+1}\|_{\mathcal{T}_{g}}^2\!+\!\frac{1}{\gamma}\|\Delta y^{k+1}\|_{\mathcal{T}_{g}}^2,\\
  & 2\big|\langle\eta^{k+1}\!-\!\eta^k,\Delta y^{k+1}\rangle\big|
     \le \big(\gamma^{-1}+\gamma\nu^2_{k}+\gamma\nu^2_{k+1}\big)\|\Delta y^{k+1}\|^2_{\mathcal{T}_{g}}
        \!+\!\gamma^{-1}\|\Delta y^{k}\|^2_{\mathcal{T}_{g}}\nonumber
 \end{align}
 for all $k\ge 1$. Indeed, when $\nu_{k}=0$, the third inequality in \eqref{C2-etayk} holds since now $\eta^{k}=0$;
  and when $\nu_{k}\ne 0$, from equation \eqref{Cauchy} and
  $\mathcal{F}^{-1}\preceq\mathcal{T}_{\!f}$ and $\mathcal{G}^{-1}\preceq\mathcal{T}_{\!g}$, it follows that
  \begin{align*}
  & 2\big|\langle\eta^{k+1}\!-\!\eta^k,\Delta y^{k+1}\rangle\big|
  \le (\gamma\nu^2_{k}+\gamma\nu_{k+1}^2)\|\Delta y^{k+1}\|^2_{\mathcal{G}^{-1}}
      +\frac{1}{\gamma\nu^2_{k}}\|\eta^{k}\|^2_{\mathcal{G}}
      +\frac{1}{\gamma\nu_{k+1}^2}\|\eta^{k+1}\|^2_{\mathcal{G}}\nonumber\\
  &\qquad\qquad\qquad\qquad\quad\
    \le \big(\gamma^{-1}+\gamma\nu^2_{k}+\gamma\nu^2_{k+1}\big)\|\Delta y^{k+1}\|^2_{\mathcal{T}_{g}}
        \!+\!\gamma^{-1}\|\Delta y^{k}\|^2_{\mathcal{T}_{g}}.
  \end{align*}
  Similarly, one can prove that another two inequalities hold for all $k\ge 1$.
 Summing up the three inequalities in \eqref{C2-etayk} yields \eqref{cross-C3}.
 The proof is completed.
 \end{proof}

 \medskip

 Based on the results of Lemmas \ref{Lemma1} and \ref{lemma2},
 we obtain the following proposition.
 \begin{proposition}\label{prop21}
  Let $\{(x^{k},y^{k},z^{k})\}_{k\ge 1}$ be the sequence generated by the IEIPD-ADMMs. Suppose that
  Assumption \ref{assump} holds and the operator $\mathcal{P}_{g}$ also satisfies $\mathcal{P}_{g}+\frac{3}{8}\Sigma_{g}\succeq 0$.
  \begin{description}
  \item[(a)] If the criterion (C1) is used in (S.1) and (S.2), then for any given $\gamma>0$ we have
  \begin{align*}
   & \big(1\!-\!\frac{\mu_{k+1}}{\gamma}\big)\|x_e^{k+1}\|^2_{\mathcal{P}_{\!f}+\Sigma_{\!f}}
     \!-\!\|x_e^k\|_{\mathcal{P}_{\!f}+\Sigma_{\!f}}^2
     \!+\!\big(1\!-\!\frac{2\mu_{k+1}\!+\!\nu_{k+1}}{\gamma}\big)\|y_e^{k+1}\|^2_{\mathcal{T}_g}\nonumber\\
  &  \!-\!\|y_e^k\|_{\mathcal{T}_g}^2\!+(\tau\sigma)^{-1}\big(\|z_e^{k+1}\|^2-\|z_e^k\|^2\big)
     \!+\!\|\Delta y^{k+1}\|^2_{\mathcal{P}_{\!g}+\frac{3}{4}\Sigma_{g}}
     \!-\!\|\Delta y^k\|^2_{\mathcal{P}_{\!g}+\frac{3}{4}\Sigma_{g}}\nonumber\\
  &  + \big(2\!-\!\tau-\!2\gamma^{-1}\mu_{k+1}\big)\sigma\|h(x^{k+1},y^{k+1})\|^2\nonumber\\
 \le&\ 2(1\!-\!\tau)\sigma\langle h(x^k,y^k),\mathcal{B}^*\Delta y^{k+1}\rangle
        \!-\big(1\!-\!\gamma^{-1}\nu_{k+1}\!-\!\gamma^{-1}\nu_{k}\big)\|\Delta y^{k+1}\|^2_{\mathcal{T}_g} \nonumber\\
    &\ -\!\|\Delta x^{k+1}\|^2_{\mathcal{P}_{\!f}+\frac{1}{2}\Sigma_{\!f}}
       \!+\!\gamma\|\mathcal{T}_{f}^{-1}\|\mu_{k+1}\!+\!\gamma\|\mathcal{T}_{g}^{-1}\|(2\nu_{k+1}+\nu_{k})
       \quad{\rm for}\ k\ge 1.
 \end{align*}
  \item[(b)] If the criterion (C2) is used for (S.1) and (S.2), then we have
 \begin{align}\label{result-C2}
  &\big(1\!-\!\mu_{k+1}\big)\|x_e^{k+1}\|_{\mathcal{P}_{\!f}+\Sigma_{\!f}}^2\!-\!\|x_e^{k}\|_{\mathcal{P}_{\!f}+\Sigma_{\!f}}^2
      +\big(1\!-\!\nu_{k+1}-5\mu_{k+1}\big)\|y_e^{k+1}\|^2_{\mathcal{T}_g}-\|y_e^{k}\|^2_{\mathcal{T}_{g}}\nonumber\\
 & +(\tau\sigma)^{-1}\big(\|z_e^{k+1}\|^2\!-\|z_e^k\|^2\big)
   \!+\nu_{k+1}\|\Delta y^{k+1}\|^2_{\mathcal{T}_{g}}\!-\nu_{k}\|\Delta y^{k}\|^2_{\mathcal{T}_{g}}
   \!-6\mu_{k+1}\sigma\|h(x^{k},y^{k})\|^2\nonumber\\
 & +\big(2\!-\!\tau-3\mu_{k+1}\big)\sigma\|h(x^{k+1},y^{k+1})\|^2  +\|\Delta y^{k+1}\|^2_{\mathcal{P}_{g}+\frac{3}{4}\Sigma_{g}}
    \!-\|\Delta y^k\|^2_{\mathcal{P}_{\!g}+\frac{3}{4}\Sigma_{g}}\nonumber\\
 \le&\ 2(1\!-\!\tau)\sigma\langle h(x^{k},y^{k}), \mathcal{B}^*\Delta y^{k+1}\rangle
      -\big(1\!-\nu_{k}\!-4\nu_{k+1}\!-4\mu_{k+1}\big)\|\Delta y^{k+1}\|^2_{\mathcal{T}_{g}}\nonumber\\
 &\ -\|\Delta x^{k+1}\|_{\mathcal{P}_{\!f}+\frac{1}{2}\Sigma_{\!f}-\mu_{k+1}(\mathcal{P}_{\!f}+\Sigma_{\!f})}^2
    \qquad{\rm for}\ k\ge 1.
 \end{align}
 \item[(c)] If the criterion (C2') is used for (S.1) and (S.2), then for any given $\gamma>0$ we have
 \begin{align}\label{result-C3}
  & \big(1\!-\!\gamma\mu^2_{k+1}\big)\|x_e^{k+1}\|^2_{\mathcal{P}_{\!f}+\Sigma_{\!f}}\!-\!\|x_e^{k}\|^2_{\mathcal{P}_{\!f}+\Sigma_{\!f}}
    +\big(1\!-\!\gamma\nu^2_{k+1}-3\gamma\mu_{k+1}^2\big)\|y_e^{k+1}\|^2_{\mathcal{T}_{g}}\!-\!\|y_e^{k}\|^2_{\mathcal{T}_{g}} \nonumber\\
  & +(\tau\sigma)^{-1}\big(\|z_e^{k+1}\|^2\!-\!\|z_e^k\|^2\big)
    \!+\big(2\!-\!\tau-2.5\gamma^{-1}-1.5\gamma\mu_{k+1}^2\big)\sigma\|h(x^{k+1},y^{k+1})\|^2 \nonumber\\
  & +\|\Delta y^{k+1}\|^2_{\mathcal{P}_{g}+\frac{3}{4}\Sigma_{g}}
    -\|\Delta y^k\|^2_{\mathcal{P}_{g}+\frac{3}{4}\Sigma_{g}}
    +\gamma^{-1}\|\Delta y^{k+1}\|^2_{\mathcal{T}_{g}}
    -\gamma^{-1}\|\Delta y^{k}\|^2_{\mathcal{T}_{g}}\nonumber\\
 \le&\ 2(1\!-\!\tau)\sigma\langle h(x^k,y^k), \mathcal{B}^*\Delta y^{k+1})\rangle-\|\Delta x^{k+1}\|_{\mathcal{P}_{\!f}
      +\frac{1}{2}\Sigma_{\!f}-\gamma^{-1}(\mathcal{P}_{\!f}+\Sigma_{\!f})}^2 \nonumber\\
 &\ \!-\!\big(1\!-\!6\gamma^{-1}\!-\!\gamma\nu^2_{k}-\gamma\nu^2_{k+1}\big)
     \|\Delta y^{k+1}\|_{\mathcal{T}_g}^2+4\gamma^{-1}\sigma\|h(x^{k},y^{k})\|^2\quad{\rm for}\ k\ge 1.
 \end{align}
 \end{description}
 \end{proposition}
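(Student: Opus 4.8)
The plan is to substitute each of the three bounds from Lemma~\ref{lemma2} into the master inequality of Lemma~\ref{Lemma1} and then regroup terms so that the error-dependent quantities attach themselves to the ``correct'' squared norms on each side. Concretely, recall that Lemma~\ref{Lemma1} gives, for all $k\ge 0$,
\begin{align*}
 & (2\!-\!\tau)\sigma\|h(x^{k+1},y^{k+1})\|^2
    +(\tau\sigma)^{-1}\big(\|z_e^{k+1}\|^2\!-\|z_e^k\|^2\big)
    +\|y_e^{k+1}\|^2_{\mathcal{T}_{g}}\!-\|y_e^{k}\|^2_{\mathcal{T}_{g}} \\
  &  +\|x_e^{k+1}\|^2_{\mathcal{P}_{\!f}+\Sigma_{\!f}}\!-\|x_e^{k}\|^2_{\mathcal{P}_{\!f}+\Sigma_{\!f}}
     +\|\Delta y^{k+1}\|^2_{\mathcal{P}_{\!g}+\frac{3}{4}\Sigma_{g}}
     -\|\Delta y^k\|^2_{\mathcal{P}_{\!g}+\frac{3}{4}\Sigma_{g}} \\
  &\le 2(1\!-\!\tau)\sigma\langle h(x^k,y^k), \mathcal{B}^*\Delta y^{k+1}\rangle
      +r^{k+1}\!-\|\Delta x^{k+1}\|_{\mathcal{P}_{\!f}+\frac{1}{2}\Sigma_{\!f}}^2\!-\|\Delta y^{k+1}\|_{\mathcal{T}_g}^2,
\end{align*}
and I would simply move all the ingredients of the three upper estimates of $|r^{k+1}|$ to the left, using $r^{k+1}\le|r^{k+1}|$.

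For part (a), I would take the bound \eqref{cross-C1}. The term $\gamma^{-1}\mu_{k+1}\|x_e^{k+1}\|_{\mathcal{T}_f}^2$ combines with $\|x_e^{k+1}\|^2_{\mathcal{P}_{\!f}+\Sigma_{\!f}}$ after noting $\mathcal{P}_{\!f}+\Sigma_{\!f}\preceq\mathcal{T}_{\!f}$, hence $\|x_e^{k+1}\|_{\mathcal{T}_f}^2$ can be replaced by $\|x_e^{k+1}\|^2_{\mathcal{P}_{\!f}+\Sigma_{\!f}}$ in a \emph{lower}-bound direction only if the coefficient is negative — so more carefully I would bound $\gamma^{-1}\mu_{k+1}\|x_e^{k+1}\|_{\mathcal{T}_f}^2$ from above by $\gamma^{-1}\mu_{k+1}$ times the relevant norm, which is legitimate since we want an upper bound for the right side. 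Actually the cleanest route: on the left side we have $+\|x_e^{k+1}\|^2_{\mathcal{P}_{\!f}+\Sigma_{\!f}}$, and from the right side $r^{k+1}$ contributes $+\gamma^{-1}\mu_{k+1}\|x_e^{k+1}\|_{\mathcal{T}_f}^2$; since $\mathcal{P}_{\!f}+\Sigma_{\!f}\preceq\mathcal{T}_{\!f}$ does \emph{not} let us bound $\|x_e^{k+1}\|_{\mathcal{T}_f}^2$ above by $\|x_e^{k+1}\|^2_{\mathcal{P}_{\!f}+\Sigma_{\!f}}$, I suspect the paper's convention is that the ``$\mathcal{T}_f$'' appearing via (C1) is really being majorised directly and the coefficient $(1-\mu_{k+1}/\gamma)$ on $\|x_e^{k+1}\|^2_{\mathcal{P}_{\!f}+\Sigma_{\!f}}$ comes from absorbing a term of the form $\gamma^{-1}\mu_{k+1}\|x_e^{k+1}\|^2_{\mathcal{P}_{\!f}+\Sigma_{\!f}}$; I would therefore re-examine (C1) to see whether $\mathcal{T}_{\!f}$ there should be $\mathcal{P}_{\!f}+\Sigma_{\!f}$ or whether an additional spectral-norm constant is being swept into $\gamma$. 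Granting the stated form of \eqref{cross-C1}, the bookkeeping is: the $x_e$ term yields the factor $(1-\mu_{k+1}/\gamma)$; the $y_e$ term combines $\nu_{k+1}/\gamma\|y_e^{k+1}\|^2_{\mathcal{T}_g}$ with the $2\mu_{k+1}/\gamma$ piece that emerges from re-expressing $\|h(x^{k+1},y^{k+1})\|^2$ overlaps, giving $(1-(2\mu_{k+1}+\nu_{k+1})/\gamma)$; the $\Delta y^{k+1}$ term absorbs $\gamma^{-1}(\nu_{k+1}+\nu_k)$, leaving $-(1-\gamma^{-1}\nu_{k+1}-\gamma^{-1}\nu_k)\|\Delta y^{k+1}\|^2_{\mathcal{T}_g}$ on the right; the $h(x^{k+1},y^{k+1})$ coefficient picks up $-2\gamma^{-1}\mu_{k+1}$; and the pure constants $\gamma\|\mathcal{T}_f^{-1}\|\mu_{k+1}+\gamma\|\mathcal{T}_g^{-1}\|(2\nu_{k+1}+\nu_k)$ stay on the right.

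For part (b) I plug in \eqref{cross-C2}. Here there is a genuinely new feature: the bound contains $\mu_{k+1}\|\Delta x^{k+1}\|^2_{\mathcal{T}_f}$, which must be absorbed into the term $-\|\Delta x^{k+1}\|^2_{\mathcal{P}_{\!f}+\frac12\Sigma_{\!f}}$, producing the expression $-\|\Delta x^{k+1}\|^2_{\mathcal{P}_{\!f}+\frac12\Sigma_{\!f}-\mu_{k+1}(\mathcal{P}_{\!f}+\Sigma_{\!f})}$; strictly one should write $\mu_{k+1}\|\Delta x^{k+1}\|^2_{\mathcal{T}_f}\le\mu_{k+1}\|\Delta x^{k+1}\|^2_{\mathcal{P}_{\!f}+\Sigma_{\!f}}+\mu_{k+1}\sigma\|\mathcal{A}\|^2(\dots)$, and I would track where the extra $\mu_{k+1}\sigma\|h\|^2$-type contributions land — this is the source of the $-6\mu_{k+1}\sigma\|h(x^k,y^k)\|^2$, $-3\mu_{k+1}\sigma\|h(x^{k+1},y^{k+1})\|^2$, and $5\mu_{k+1}\|y_e^{k+1}\|^2_{\mathcal{T}_g}$, $4\mu_{k+1}\|\Delta y^{k+1}\|^2_{\mathcal{T}_g}$ correction coefficients. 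The key identity I would use repeatedly is $h(x^{k+1},y^{k+1})=h(x^k,y^k)+\mathcal{A}^*\Delta x^{k+1}+\mathcal{B}^*\Delta y^{k+1}$ together with $\|a+b+c\|^2\le 3(\|a\|^2+\|b\|^2+\|c\|^2)$ to convert $\|\Delta x^{k+1}\|^2_{\mathcal{T}_f}$ (which via $\mathcal{T}_f=\mathcal{P}_f+\Sigma_f+\sigma\mathcal{A}\mathcal{A}^*$ contains $\sigma\|\mathcal{A}^*\Delta x^{k+1}\|^2$) and similarly $\|\Delta y^k\|^2_{\mathcal{T}_g}$ into the $h$-norms and the $\Delta y$-norms that already appear. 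Part (c) with \eqref{cross-C3} is mechanically the same as (b) but with $\mu_{k+1}\mapsto\gamma\mu_{k+1}^2$, the $\|\Delta x^{k+1}\|^2_{\mathcal{T}_f}$ coefficient being $\gamma^{-1}$ rather than $\mu_{k+1}$, hence the factor $\mathcal{P}_{\!f}+\frac12\Sigma_{\!f}-\gamma^{-1}(\mathcal{P}_{\!f}+\Sigma_{\!f})$, and an extra $\gamma^{-1}\|\Delta y^k\|^2_{\mathcal{T}_g}$ telescoping term; the $2.5\gamma^{-1}$ and $1.5\gamma\mu_{k+1}^2$ in the $h(x^{k+1},y^{k+1})$ coefficient and the $4\gamma^{-1}\sigma\|h(x^k,y^k)\|^2$ on the right arise from splitting $\|\Delta y^{k+1}\|^2_{\mathcal{T}_g}$ and $\|\Delta y^k\|^2_{\mathcal{T}_g}$ through the same $h$-difference identity. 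The main obstacle, I expect, is purely organisational: keeping the six-or-seven term telescoping structure intact while making sure every use of ``$\mathcal{T}_f$'' versus ``$\mathcal{P}_f+\Sigma_f$'' and every $\|a+b+c\|^2\le 3(\dots)$ expansion is charged to the right coefficient, so that the final coefficients match the displayed constants $1-\mu_{k+1}$, $1-\nu_{k+1}-5\mu_{k+1}$, $2-\tau-3\mu_{k+1}$, $6\mu_{k+1}$, $1-\nu_k-4\nu_{k+1}-4\mu_{k+1}$, etc., exactly; I would verify these by doing the (a)-case carefully first and then mirroring the argument for (b) and (c).
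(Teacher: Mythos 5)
Your overall strategy is exactly the paper's: substitute each bound from Lemma \ref{lemma2} into the inequality of Lemma \ref{Lemma1} and regroup. But in part (a) you stall at precisely the point that needs an idea, and the resolution you entertain (``re-examine (C1) to see whether $\mathcal{T}_{\!f}$ there should be $\mathcal{P}_{\!f}+\Sigma_{\!f}$'') is not what happens — the criterion is fine as stated. The missing step is the exact decomposition $\|x_e^{k+1}\|_{\mathcal{T}_{\!f}}^2=\|x_e^{k+1}\|_{\mathcal{P}_{\!f}+\Sigma_{\!f}}^2+\|x_e^{k+1}\|_{\sigma\mathcal{A}\mathcal{A}^*}^2$ (immediate from $\mathcal{T}_{\!f}=\mathcal{P}_{\!f}+\Sigma_{\!f}+\sigma\mathcal{A}\mathcal{A}^*$), after which the leftover piece is controlled by writing $\mathcal{A}^*x_e^{k+1}=h(x^{k+1},y^{k+1})-\mathcal{B}^*y_e^{k+1}$ (valid because $h(x^*,y^*)=0$), giving
\begin{equation*}
\|x_e^{k+1}\|_{\sigma\mathcal{A}\mathcal{A}^*}^2\le 2\sigma\|h(x^{k+1},y^{k+1})\|^2+2\|y_e^{k+1}\|_{\sigma\mathcal{B}\mathcal{B}^*}^2
\le 2\sigma\|h(x^{k+1},y^{k+1})\|^2+2\|y_e^{k+1}\|_{\mathcal{T}_{\!g}}^2 .
\end{equation*}
This is the sole source of the $2\gamma^{-1}\mu_{k+1}$ corrections to both the $\|h(x^{k+1},y^{k+1})\|^2$ coefficient and the $\|y_e^{k+1}\|_{\mathcal{T}_g}^2$ coefficient in (a); your phrase ``the $2\mu_{k+1}/\gamma$ piece that emerges from re-expressing $\|h\|^2$ overlaps'' gestures at this but you never commit to the inequality, and without it your part (a) does not close. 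Ironically, you describe the correct device (the $\mathcal{T}_{\!f}$ split plus the $h$-difference identity) when discussing part (b), so the gap is one of recognition rather than of technique.

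A second, smaller issue: to recover the exact constants $5\mu_{k+1}$, $3\mu_{k+1}$, $6\mu_{k+1}$, $4\mu_{k+1}$ in (b) you cannot use the symmetric bound $\|a+b+c\|^2\le 3(\|a\|^2+\|b\|^2+\|c\|^2)$ that you propose; the paper expands $\|x_e^{k+1}\|_{\mathcal{A}\mathcal{A}^*}^2$ and $\|\Delta x^{k+1}\|_{\mathcal{A}\mathcal{A}^*}^2$ exactly as sums of squares plus cross terms and then applies the weighted inequality $2|\langle u,v\rangle|\le \gamma^{-1}\|u\|^2+\gamma\|v\|^2$ with specific weights (e.g.\ $\tfrac14$ on $\|h(x^{k+1},y^{k+1})\|^2$ versus $4$ on $\|y_e^{k+1}\|_{\mathcal{B}\mathcal{B}^*}^2$), yielding $\tfrac54\|h(x^{k+1},y^{k+1})\|^2+5\|y_e^{k+1}\|_{\mathcal{B}\mathcal{B}^*}^2$ and the analogous $\tfrac74$, $6$, $4$ splits for $\|\Delta x^{k+1}\|_{\mathcal{A}\mathcal{A}^*}^2$. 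You flag at the end that the constants need to be ``charged to the right coefficient,'' which is honest, but with the uniform factor-of-three expansion the displayed constants simply would not come out. Part (c) then follows the same pattern with $\mu_{k+1}\mapsto\gamma\mu_{k+1}^2$ and $\gamma^{-1}$ in place of $\mu_{k+1}$ on the $\Delta x$ term, as you say.
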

 \begin{proof}
  (a) From inequality \eqref{cross-C1} and the result of Lemma \ref{Lemma1},
  it follows that
  \begin{align*}
   & \big(2\!-\!\tau\big)\sigma\|h(x^{k+1},y^{k+1})\|^2
      \!+\frac{1}{\tau\sigma}\big(\|z_e^{k+1}\|^2-\|z_e^k\|^2\big)
     \!+\!\big(1\!-\!\frac{\nu_{k+1}}{\gamma}\big)\|y_e^{k+1}\|^2_{\mathcal{T}_g}
   \!-\!\|y_e^k\|_{\mathcal{T}_g}^2\nonumber\\
  &       +\big(1\!-\!\frac{\mu_{k+1}}{\gamma}\big)\|x_e^{k+1}\|^2_{\mathcal{P}_{\!f}+\Sigma_{\!f}}
     \!-\!\|x_e^k\|_{\mathcal{P}_{\!f}+\Sigma_{\!f}}^2
     \!+\!\|\Delta y^{k+1}\|^2_{\mathcal{P}_{\!g}+\frac{3}{4}\Sigma_{g}}\!-\!\|\Delta y^k\|^2_{\mathcal{P}_{\!g}+\frac{3}{4}\Sigma_{g}}
     \nonumber\\
 &\le 2(1\!-\!\tau)\sigma\langle h(x^k,y^k),\mathcal{B}^*\Delta y^{k+1}\rangle
     \!-\!\|\Delta x^{k+1}\|^2_{\mathcal{P}_{\!f}+\frac{1}{2}\Sigma_{\!f}}
      \!-\!\big(1\!-\!\frac{\nu_{k+1}\!+\!\nu_{k}}{\gamma}\big)\|\Delta y^{k+1}\|^2_{\mathcal{T}_g}\nonumber\\
 &\ +\frac{\mu_{k+1}}{\gamma}\|x_e^{k+1}\|_{\sigma\mathcal{A}\mathcal{A}^*}^2
   \!+\!\gamma\|\mathcal{T}_{\!f}^{-1}\|\mu_{k+1}\!+\! \gamma\|\mathcal{T}_{\!g}^{-1}\|(2\nu_{k+1}\!+\nu_{k}).
 \end{align*}
 Since
 \(
   \frac{1}{2}\|x_e^{k+1}\|_{\sigma\mathcal{A}\mathcal{A}^*}^2 \leq \sigma\|h(x^{k+1},y^{k+1})\|^2
   \!+\!\|y_e^{k+1}\|_{\sigma\mathcal{B}\mathcal{B}^*}^2
   \le \sigma\|h(x^{k+1},y^{k+1})\|^2 \!+\!\|y_e^{k+1}\|_{\mathcal{T}_{\!g}}^2
 \)
 where the second inequality is due to $\sigma\mathcal{B}\mathcal{B}^*\preceq\mathcal{T}_{\!g}$,
 the last inequality implies part (a).

  \medskip
 \noindent
 (b) From inequality \eqref{cross-C2} and the result of Lemma \ref{Lemma1},
  it follows that
  \begin{align}\label{temp-equa27}
 &\big(2\!-\!\tau\big)\sigma\|h(x^{k+1},y^{k+1})\|^2
  +(\tau\sigma)^{-1}\big(\|z_e^{k+1}\|^2\!-\|z_e^k\|^2\big)
  +\big(1\!-\!\nu_{k+1}\big)\|y_e^{k+1}\|^2_{\mathcal{T}_g}
    \nonumber\\
 &   -\|y_e^{k}\|^2_{\mathcal{T}_{g}}
   +\big(1\!-\!\mu_{k+1}\big)\|x_e^{k+1}\|_{\mathcal{P}_f+\Sigma_{f}}^2\!-\!\|x_e^{k}\|_{\mathcal{P}_f+\Sigma_{f}}^2
   +\nu_{k+1}\|\Delta y^{k+1}\|^2_{\mathcal{T}_{g}}\nonumber\\
 & -\nu_{k}\|\Delta y^{k}\|^2_{\mathcal{T}_{g}}+\|\Delta y^{k+1}\|^2_{\mathcal{P}_{g}+\frac{3}{4}\Sigma_{g}}
    \!-\|\Delta y^k\|^2_{\mathcal{P}_{g}+\frac{3}{4}\Sigma_{g}}\nonumber\\
 &\le 2(1\!-\!\tau)\sigma\langle h(x^{k},y^{k}), \mathcal{B}^*(y^{k+1}\!-\!y^{k})\rangle
      -\big(1\!-\!\nu_{k}\!-\!4\nu_{k+1}\big)\|\Delta y^{k+1}\|^2_{\mathcal{T}_{g}}\nonumber\\
 &\quad +\mu_{k+1}\sigma\|x_e^{k+1}\|_{\mathcal{A}\mathcal{A}^*}^2 +\mu_{k+1}\|\Delta x^{k+1}\|_{\sigma\mathcal{A}\mathcal{A}^*}^2
       -\|\Delta x^{k+1}\|_{\mathcal{P}_{\!f}+\frac{1}{2}\Sigma_{\!f}-\mu_{k+1}(\mathcal{P}_{\!f}+\Sigma_{\!f})}^2.
 \end{align}
  For the terms $\|x_e^{k+1}\|_{\mathcal{A}\mathcal{A}^*}$ and $\|\Delta x^{k+1}\|_{\mathcal{A}\mathcal{A}^*}$,
  using equation \eqref{Cauchy} yields that
  \begin{align}\label{xeAA1}
  \|x_e^{k+1}\|_{\mathcal{A}\mathcal{A}^*}^2
   &=\|h(x^{k+1},y^{k+1})\|^2+\|y_e^{k+1}\|_{\mathcal{B}\mathcal{B}^*}^2-2\langle h(x^{k+1},y^{k+1}),\mathcal{B}^*y_e^{k+1}\rangle\nonumber\\
  &\le \|h(x^{k+1},y^{k+1})\|^2\!+\|y_e^{k+1}\|_{\mathcal{B}\mathcal{B}^*}^2
        \!+\frac{1}{4}\|h(x^{k+1},y^{k+1})\|^2\!+4\|y_e^{k+1}\|_{\mathcal{B}\mathcal{B}^*}^2,\\
  \|\Delta x^{k+1}\|_{\mathcal{A}\mathcal{A}^*}^2
   &=\|h(x^{k+1},y^{k+1})\|^2+\|h(x^{k},y^{k})\|^2  + \|\Delta y^{k+1}\|_{\mathcal{B}\mathcal{B}^*}^2 \nonumber\\
   &\quad -2\langle h(x^{k+1},y^{k+1}), h(x^{k},y^{k})\rangle
                 + 2\langle h(x^{k},y^{k})\!-h(x^{k+1},y^{k+1}), \mathcal{B}^*\Delta y^{k+1}\rangle \nonumber\\
   &\le \|h(x^{k+1},y^{k+1})\|^2+\|h(x^{k},y^{k})\|^2  + \|\Delta y^{k+1}\|_{\mathcal{B}\mathcal{B}^*}^2
                         +\frac{1}{4}\|h(x^{k+1},y^{k+1})\|^2\nonumber\\
   &\quad \!+4\|h(x^{k},y^{k})\|^2+\|h(x^{k},y^{k})\|^2 +\|\Delta y^{k+1}\|_{\mathcal{B}\mathcal{B}^*}^2\nonumber\\
   &\quad +\frac{1}{2}\|h(x^{k+1},y^{k+1})\|^2 + 2\|\Delta y^{k+1}\|_{\mathcal{B}\mathcal{B}^*}^2.
      \label{xeAA2}
  \end{align}
  Combining the last inequalities with \eqref{temp-equa27} and using
  $\sigma\mathcal{B}\mathcal{B}^*\preceq \mathcal{T}_g$ yields part (b).

  \medskip
  The proof of Part (c) is similar to that of part (b), we here omit it.
 \end{proof}

 \section{Convergence analysis of the IEIDP-ADMMs}\label{sec4}

   In this section we analyze the convergence of the IEIPD-ADMMs with the approximation criterion
  (C1) and (C2) respectively chosen for the minimization in (S.1) and (S.2).
  \subsection{Convergence of the IEIDP-ADMM with (C1)}\label{subsec4.1}

  For convenience, we write
  \(
     w^{k}:=(x_e^{k};y_e^{k};z_e^{k};\Delta y^{k};\Delta z^{k})
  \)
  for $k\ge 1$, and let $\mathcal{H}\!:\mathbb{X}\times\mathbb{Y}\times\mathbb{Z}\times\mathbb{Y}\times\mathbb{Z}
 \to \mathbb{X}\times\mathbb{Y}\times\mathbb{Z}\times\mathbb{Y}\times\mathbb{Z}$ denote
  the block diagonal operator defined by
  \[
    \mathcal{H}:={\rm Diag}\Big((\mathcal{P}_{\!f}\!+\!\Sigma_{\!f})^{1/2},\ (\mathcal{T}_g)^{1/2},\
    \frac{1}{\sqrt{\tau\sigma}}\mathcal{I},\ (\mathcal{P}_{\!g}\!+\!{\textstyle\frac{3}{4}}\Sigma_g)^{1/2},
    \ \frac{1}{\tau\sqrt{\sigma}}\mathcal{I}\Big)
  \]
  with the proximal operators $\mathcal{P}_{\!f}$ and $\mathcal{P}_{\!g}$ satisfying $\mathcal{P}_{\!f}+\Sigma_{\!f}\succeq 0$
  and $\mathcal{P}_{\!g}\!+\!\frac{3}{4}\Sigma_g\succeq 0$.
 \begin{lemma}\label{lemma31-C1}
  Let $\{(x^{k},y^{k},z^{k})\}_{k\ge 1}$ be the sequence generated by the IEIDP-ADMM with the criterion (C1) and
  $\max(\mu_{k},\nu_{k})\le \gamma\min(\frac{1}{6},\frac{2-\tau}{4})$ for some constant $\gamma>0$.
  Suppose that Assumption \ref{assump} holds and $\mathcal{P}_{\!f}$ and $\mathcal{P}_{g}$
  also satisfy $\mathcal{P}_{\!f}\!+\!\frac{1}{2}\Sigma_{\!f} \succeq 0$ and $\mathcal{P}_{\!g}\!+\!\frac{3}{8}\Sigma_{g}\succeq 0$.
  Then, when $\tau\in \big(0,2\big)$,
  there exists an absolute constant $c>0$ such that for all $k\ge 1$
  \begin{align*}
  &\|\mathcal{H}w^{k+1}\|^2_{\mathcal{W}_{k+1}}\le
   \Big[1\!+\frac{4\nu_k}{\gamma(2-\tau)}\Big]\Big[1\!+\frac{2(\nu_k\!+\!\nu_{k+1})}{\gamma}\Big]\|\mathcal{H}w^{k}\|^2_{\mathcal{W}_{k}}
     +c\gamma\big(\nu_k\!+\nu_{k+1}\!+\mu_{k+1}\big)\nonumber\\
  &\qquad\qquad\quad -\!\min\!\Big[\frac{2}{3}\min(\tau,1\!+\!\tau\!-\!\tau^2),\min(\tau,1\!+\!\tau\!-\!\tau^2)\Big]\Big(\frac{3\sigma}{2\tau}\|h(x^{k},y^{k})\|^2
      \!+\!\|\Delta y^{k+1}\|^2_{\mathcal{T}_g}\Big),
  \end{align*}
  where $\mathcal{W}_{k}\!:\mathbb{X}\times\mathbb{Y}\times\mathbb{Z}\times\mathbb{Y}\times\mathbb{Z}
 \to \mathbb{X}\times\mathbb{Y}\times\mathbb{Z}\times\mathbb{Y}\times\mathbb{Z}$ is the block diagonal linear operator
  \[
   \mathcal{W}_k:={\rm Diag}\Big(\big(1\!-\!\gamma^{-1}\mu_k\big)\mathcal{I},\,\big(1\!-\!2\gamma^{-1}\mu_k\!-\!\gamma^{-1}\nu_k\big)\mathcal{I},\,
                   \mathcal{I},\,\mathcal{I},\, (2\!-\!\tau\!-\!2\gamma^{-1}\mu_k)\mathcal{I}\Big).
  \]
 \end{lemma}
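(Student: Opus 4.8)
I would start from Proposition~\ref{prop21}(a), applied with exactly the $\gamma$ of the hypothesis; note that all the weighted norms $\|\mathcal{H}w^{k}\|^2_{\mathcal{W}_{k}}$ are well defined because $\mathcal{P}_{\!f}+\Sigma_{\!f}\succeq0$ and $\mathcal{P}_{\!g}+\tfrac34\Sigma_{g}\succeq0$ follow from $\mathcal{P}_{\!f}+\tfrac12\Sigma_{\!f}\succeq0$, $\mathcal{P}_{\!g}+\tfrac38\Sigma_{g}\succeq0$ together with $\Sigma_{\!f},\Sigma_{g}\succeq0$. The first step is pure bookkeeping: since $\Delta z^{k+1}=\tau\sigma\,h(x^{k+1},y^{k+1})$, so that $\tfrac{1}{\tau^2\sigma}\|\Delta z^{k+1}\|^2=\sigma\|h(x^{k+1},y^{k+1})\|^2$, the left-hand side of Proposition~\ref{prop21}(a) is exactly $\|\mathcal{H}w^{k+1}\|^2_{\mathcal{W}_{k+1}}-\big(\|x_e^k\|^2_{\mathcal{P}_{\!f}+\Sigma_{\!f}}+\|y_e^k\|^2_{\mathcal{T}_g}+(\tau\sigma)^{-1}\|z_e^k\|^2+\|\Delta y^k\|^2_{\mathcal{P}_{\!g}+\frac34\Sigma_{g}}\big)$, and the subtracted block equals $\|\mathcal{H}w^{k}\|^2_{\mathcal{W}_{k}}$ minus its $\Delta z^k$ part $(2-\tau-2\gamma^{-1}\mu_k)\sigma\|h(x^k,y^k)\|^2$ plus the two nonnegative remainders $R_k:=\gamma^{-1}\mu_k\|x_e^k\|^2_{\mathcal{P}_{\!f}+\Sigma_{\!f}}+\gamma^{-1}(2\mu_k+\nu_k)\|y_e^k\|^2_{\mathcal{T}_g}$ coming from the unit weights there versus the weights $<1$ in $\mathcal{W}_k$. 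Hence Proposition~\ref{prop21}(a) becomes
\[
\|\mathcal{H}w^{k+1}\|^2_{\mathcal{W}_{k+1}}\le \|\mathcal{H}w^{k}\|^2_{\mathcal{W}_{k}}+R_k-(2-\tau-2\gamma^{-1}\mu_k)\sigma\|h(x^k,y^k)\|^2+2(1-\tau)\sigma\langle h(x^k,y^k),\mathcal{B}^*\Delta y^{k+1}\rangle
\]
\[
\qquad -(1-\gamma^{-1}\nu_{k+1}-\gamma^{-1}\nu_k)\|\Delta y^{k+1}\|^2_{\mathcal{T}_g}-\|\Delta x^{k+1}\|^2_{\mathcal{P}_{\!f}+\frac12\Sigma_{\!f}}+\gamma\|\mathcal{T}_f^{-1}\|\mu_{k+1}+\gamma\|\mathcal{T}_g^{-1}\|(2\nu_{k+1}+\nu_k).
\]

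Next I would dispose of the cross term. Because $\sigma\mathcal{B}\mathcal{B}^*\preceq\mathcal{T}_g$ we have $\sigma\|\mathcal{B}^*\Delta y^{k+1}\|^2\le\|\Delta y^{k+1}\|^2_{\mathcal{T}_g}$, so \eqref{Cauchy} with a free parameter $\lambda>0$ gives $2(1-\tau)\sigma\langle h(x^k,y^k),\mathcal{B}^*\Delta y^{k+1}\rangle\le \lambda^{-1}|1-\tau|\,\sigma\|h(x^k,y^k)\|^2+\lambda|1-\tau|\,\|\Delta y^{k+1}\|^2_{\mathcal{T}_g}$ (this term is absent when $\tau=1$). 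The hypothesis $\max(\mu_k,\nu_k)\le\gamma\min(\tfrac16,\tfrac{2-\tau}4)$ forces $2-\tau-2\gamma^{-1}\mu_k\ge\tfrac{2-\tau}2>0$ and $1-\gamma^{-1}\nu_{k+1}-\gamma^{-1}\nu_k\ge\tfrac23>0$, so both negative reservoirs are genuinely available. I would then choose $\lambda=\lambda(\tau)$ — balancing the trade-off $\lambda^{-1}|1-\tau|$ versus $\lambda|1-\tau|$ against the room $2-\tau$ on the $h$-term and $1$ on the $\Delta y$-term, an elementary one-variable optimization that is precisely where $m:=\min(\tau,1+\tau-\tau^2)$ and the constants $\tfrac32,\tfrac23$ are decided — so that after cancellation the last three displayed $h$- and $\Delta y$-contributions collapse to at most $-\tfrac{m}{\tau}\sigma\|h(x^k,y^k)\|^2-\tfrac23 m\,\|\Delta y^{k+1}\|^2_{\mathcal{T}_g}+E_k\,\sigma\|h(x^k,y^k)\|^2$, with a small residual $E_k$ controlled by $\gamma^{-1}\nu_k$ (using again the smallness hypotheses, and $\mu_k\le\gamma\tfrac{2-\tau}4$ where $\mu$ enters). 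The term $-\|\Delta x^{k+1}\|^2_{\mathcal{P}_{\!f}+\frac12\Sigma_{\!f}}$ is $\le0$ by $\mathcal{P}_{\!f}+\tfrac12\Sigma_{\!f}\succeq0$ and is simply discarded.

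Finally I would re-absorb the leftover nonnegative $k$-quantities $R_k+E_k\sigma\|h(x^k,y^k)\|^2$ into a multiplicative factor in front of $\|\mathcal{H}w^{k}\|^2_{\mathcal{W}_{k}}$: each of $\|x_e^k\|^2_{\mathcal{P}_{\!f}+\Sigma_{\!f}}$, $\|y_e^k\|^2_{\mathcal{T}_g}$, $\sigma\|h(x^k,y^k)\|^2=\tfrac{1}{\tau^2\sigma}\|\Delta z^k\|^2$ is one of the nonnegative blocks of $\|\mathcal{H}w^{k}\|^2_{\mathcal{W}_{k}}$ divided by that block's weight, and those weights are bounded below ($1-\gamma^{-1}\mu_k\ge\tfrac56$, $1-2\gamma^{-1}\mu_k-\gamma^{-1}\nu_k\ge\tfrac12$, $2-\tau-2\gamma^{-1}\mu_k\ge\tfrac{2-\tau}2$), so each leftover is a small multiple of $\|\mathcal{H}w^{k}\|^2_{\mathcal{W}_{k}}$; rounding the resulting constants upward turns the sum into $\big((1+\tfrac{4\nu_k}{\gamma(2-\tau)})(1+\tfrac{2(\nu_k+\nu_{k+1})}{\gamma})-1\big)\|\mathcal{H}w^{k}\|^2_{\mathcal{W}_{k}}$, the factor $1+\tfrac{4\nu_k}{\gamma(2-\tau)}$ recording the absorption of $E_k\sigma\|h(x^k,y^k)\|^2$ into the $\Delta z^k$ block (weight $\ge\tfrac{2-\tau}2$) and $1+\tfrac{2(\nu_k+\nu_{k+1})}{\gamma}$ that of $R_k$ into the $x_e^k,y_e^k$ blocks. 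Collecting $\gamma\|\mathcal{T}_f^{-1}\|\mu_{k+1}+\gamma\|\mathcal{T}_g^{-1}\|(2\nu_{k+1}+\nu_k)$ and the $O(\gamma\nu_k)$ pieces spun off above into $c\gamma(\nu_k+\nu_{k+1}+\mu_{k+1})$ with $c=c(\|\mathcal{T}_f^{-1}\|,\|\mathcal{T}_g^{-1}\|,\tau)$ absolute, and using $\min[\tfrac23 m,m]=\tfrac23 m$ for $m\ge0$ together with $\tfrac23 m\cdot\tfrac{3}{2\tau}=\tfrac m\tau$, assembles the claim (for $\tau$ with $1+\tau-\tau^2<0$ the subtracted term is $\le0$ and the inequality is weaker but still follows). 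I expect the bookkeeping in the middle step to be the main obstacle: pinning down $\lambda$ and the split of the three negative reservoirs so that exactly $\tfrac m\tau$ and $\tfrac23 m$ survive for every admissible $\tau$, with everything else small enough to be swallowed by the advertised product factor, is where the somewhat idiosyncratic constants are forced.
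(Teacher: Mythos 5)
Your proposal follows essentially the same route as the paper's proof: both start from Proposition~\ref{prop21}(a), use $\Delta z^{k+1}=\tau\sigma h(x^{k+1},y^{k+1})$ to identify the $\|h\|^2$ terms with the $\Delta z$ blocks of $\mathcal{H}w$, dispose of the cross term by a $\tau$-dependent Cauchy--Schwarz split together with $\sigma\mathcal{B}\mathcal{B}^*\preceq\mathcal{T}_g$ (which is exactly where $\min(\tau,1+\tau-\tau^2)$ is produced, with the constants $\tfrac{2}{3}$ and $\tfrac{3\sigma}{2\tau}$ coming from $\tfrac{2}{3}\le 1-\gamma^{-1}(\nu_k+\nu_{k+1})\le 1$), discard $-\|\Delta x^{k+1}\|^2_{\mathcal{P}_{\!f}+\frac12\Sigma_{\!f}}$, and absorb the leftover $k$-indexed pieces into the multiplicative factor -- the paper merely packages the last step through an intermediate operator $\mathcal{V}_{k+1}$ and the bound on $\mathcal{V}_{k+1}\mathcal{W}_k^{-1}$ instead of tracking your remainders $R_k$ and $E_k$ explicitly. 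The one wrinkle in your write-up, namely that absorbing the $\gamma^{-1}\mu_k$-weighted remainders really forces $\mu_k$ (not only $\nu_k$) into the product factor, is present in the paper's own displayed bound as well, so it is not a gap relative to the paper's argument.
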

 \begin{proof}
 For each $k\ge 1$, let $\mathcal{V}_{k}\!:\mathbb{X}\times\mathbb{Y}\times\mathbb{Z}\times\mathbb{Y}\times\mathbb{Z}
 \to \mathbb{X}\times\mathbb{Y}\times\mathbb{Z}\times\mathbb{Y}\times\mathbb{Z}$ be defined by
 \[
   \mathcal{V}_k={\rm Diag}\Big(\mathcal{I},\ \mathcal{I},\ \mathcal{I},\ \mathcal{I},\ \frac{2\!-\!\tau}{1-\gamma^{-1}\alpha_k}\mathcal{I}\Big)
   \quad {\rm with}\ \alpha_k=\nu_{k}+\nu_{k-1}.
 \]
 With the notations $\mathcal{W}_k$ and $\mathcal{V}_k$, we first establish the following important inequality:
 \begin{align}\label{aim-ineq1}
  &\|\mathcal{H}w^{k+1}\|^2_{\mathcal{W}_{k+1}}\!-\!\|\mathcal{H}w^{k}\|^2_{\mathcal{V}_{k+1}}\nonumber\\
  &\le -\min(\tau,1+\tau\!-\!\tau^2)\left[\frac{\tau^{-1}\sigma}{1\!-\!\gamma^{-1}\alpha_{k+1}}\| h(x^{k},y^{k})\|^2
       +\Big(1\!-\!\frac{\alpha_{k+1}}{\gamma}\Big)\|\Delta y^{k+1}\|_{\mathcal{T}_g}^2\right]\nonumber\\
  &\quad\ +\max(\|\mathcal{T}_{f}^{-1}\|,2\|\mathcal{T}_{g}^{-1}\|)\gamma\big(\alpha_{k+1}\!+\mu_{k+1}\big).
 \end{align}
  Indeed, when $\tau\in (0,1]$, since $\gamma^{-1}\alpha_{k+1}=\gamma^{-1}(\nu_{k+1}+\nu_{k})<1$,
  by equation \eqref{Cauchy} we have
  \begin{align*}
   2(1\!-\!\tau)\sigma\langle h(x^{k},y^{k}), \mathcal{B}^*\Delta y^{k+1}\rangle
   \le \frac{\sigma(1\!-\!\tau)}{1-\!\gamma^{-1}\alpha_{k+1}}\| h(x^{k},y^{k})\|^2
       +\Big(1-\!\frac{\alpha_{k+1}}{\gamma}\Big)\|\Delta y^{k+1}\|^2_{\sigma(1-\tau)\mathcal{B}\mathcal{B}^*}.
 \end{align*}
  Substituting this inequality into Proposition \ref{prop21}(a) and using \eqref{zk}, we obtain that
  \begin{align}\label{temp-equa30}
   & \Big(1\!-\!\frac{\mu_{k+1}}{\gamma}\Big)\|x_e^{k+1}\|^2_{\mathcal{P}_{\!f}+\Sigma_{\!f}}
     \!-\!\|x_e^k\|_{\mathcal{P}_{\!f}+\Sigma_{\!f}}^2
     \!+\!\Big(1\!-\!\frac{\nu_{k+1}}{\gamma}\!-\!\frac{2\mu_{k+1}}{\gamma}\Big)\|y_e^{k+1}\|^2_{\mathcal{T}_g}
   \!-\!\|y_e^k\|_{\mathcal{T}_g}^2\nonumber\\
  &  \!+\frac{1}{\tau\sigma}\big(\|z_e^{k+1}\|^2-\|z_e^k\|^2\big)
     \!+\!\|\Delta y^{k+1}\|^2_{\mathcal{P}_{g}+\frac{3}{4}\Sigma_{g}}
     \!-\!\|\Delta y^k\|^2_{\mathcal{P}_{g}+\frac{3}{4}\Sigma_{g}}\nonumber\\
  &  + \Big(2\!-\!\tau-\!\frac{2\mu_{k+1}}{\gamma}\Big)\frac{1}{\tau^2\sigma}\|\Delta z^{k+1}\|^2
      -\frac{(2\!-\!\tau)\sigma}{1-\gamma^{-1}\alpha_{k+1}}\cdot\frac{1}{\tau^2\sigma}\|\Delta z^{k}\|^2\nonumber\\
 \le&\ \frac{-\sigma}{1-\gamma^{-1}\alpha_{k+1}}\| h(x^{k},y^{k})\|^2
        \!-\Big(1\!-\!\frac{\alpha_{k+1}}{\gamma}\Big)\|\Delta y^{k+1}\|^2_{\mathcal{T}_g-(1\!-\!\tau)\sigma\mathcal{B}\mathcal{B}^*} \nonumber\\
  &\ -\!\|\Delta x^{k+1}\|^2_{\mathcal{P}_{\!f}+\frac{1}{2}\Sigma_{\!f}}
       \!+\!\gamma\|\mathcal{T}_{f}^{-1}\|\mu_{k+1}\!+\!\gamma\|\mathcal{T}_{g}^{-1}\|(2\nu_{k+1}+\nu_{k})\nonumber\\
 \le&\ \frac{-\sigma}{1-\gamma^{-1}\alpha_{k+1}}\| h(x^{k},y^{k})\|^2
        \!-\tau\Big(1\!-\!\frac{\alpha_{k+1}}{\gamma}\Big)\|\Delta y^{k+1}\|^2_{\mathcal{T}_g}\nonumber\\
 &\ +\max(\|\mathcal{T}_{f}^{-1}\|,2\|\mathcal{T}_{g}^{-1}\|)\gamma\big(\alpha_{k+1}\!+\mu_{k+1}\big),
 \end{align}
  where the second inequality is using
  $\mathcal{T}_g-(1\!-\!\tau)\sigma\mathcal{B}\mathcal{B}^*\succeq \tau\mathcal{T}_g$
  and $\mathcal{P}_{\!f}+\frac{1}{2}\Sigma_{\!f}\succeq 0$.
  For the case where $\tau\in(1,2)$, from $\gamma^{-1}\alpha_{k+1}=\gamma^{-1}(\nu_{k+1}+\nu_{k})<1$ and equation \eqref{Cauchy}
  we have
  \begin{align*}
  & 2(1\!-\!\tau)\sigma\langle h(x^{k},y^{k}), \mathcal{B}^*\Delta y^{k+1}\rangle \nonumber\\
  \le&\ \frac{\sigma(\tau\!-\!1)}{\tau(1-\gamma^{-1}\alpha_{k+1})}\| h(x^{k},y^{k})\|^2
      +\tau\big(1\!-\!\frac{\alpha_{k+1}}{\gamma}\big)\|\Delta y^{k+1}\|^2_{\sigma(\tau\!-\!1)\mathcal{B}\mathcal{B}^*}.
 \end{align*}
 Substituting this inequality into Proposition \ref{prop21}(a) and using \eqref{zk} yields that
 \begin{align}\label{temp-equa31}
   &\big(1-\frac{\mu_{k+1}}{\gamma}\big)\|x_e^{k+1}\|^2_{\mathcal{P}_{\!f}+\Sigma_{\!f}}
     -\|x_e^k\|_{\mathcal{P}_{\!f}+\Sigma_{\!f}}^2
         +\big(1\!-\!\frac{\nu_{k+1}}{\gamma}\!-\!\frac{2\mu_{k+1}}{\gamma}\big)\|y_e^{k+1}\|^2_{\mathcal{T}_g}
        -\|y_e^k\|_{\mathcal{T}_g}^2\nonumber\\
   &\ + (\tau\sigma)^{-1}\big(\|z_e^{k+1}\|^2 - \|z_e^k\|^2\big)
      +\big(\|\Delta y^{k+1}\|^2_{\mathcal{P}_{g}+\frac{3}{4}\Sigma_{g}}
        \!-\!\|\Delta y^k\|^2_{\mathcal{P}_{g}+\frac{3}{4}\Sigma_{g}}\big)\nonumber\\
   &\ +\big(2\!-\!\tau-\frac{2\mu_{k+1}}{\gamma}\big)\frac{1}{\tau^2\sigma}\|\Delta z^{k+1}\|^2
       -\frac{2-\tau}{1\!-\!\gamma^{-1}\alpha_{k+1}}\cdot\frac{1}{\tau^2\sigma}\|h(x^{k},y^{k})\|^2 \nonumber\\
  \le&\ \frac{-(1\!+\!\tau^{-1}\!-\!\tau)}{1\!-\!\gamma^{-1}\alpha_{k+1}}\sigma\|h(x^k,y^{k})\|^2
     -\big(1\!-\!\frac{\alpha_{k+1}}{\gamma}\big)\|\Delta y^{k+1}\|^2_{\mathcal{T}_g-(\tau\!-\!1)\tau\sigma\mathcal{B}\mathcal{B}^*}\nonumber\\
  &\quad -\|\Delta x^{k+1}\|^2_{\mathcal{P}_{\!f}+\frac{1}{2}\Sigma_{\!f}}
        \!+\!\gamma\|\mathcal{T}_{f}^{-1}\|\mu_{k+1}\!+\! \gamma\|\mathcal{T}_{g}^{-1}\|(2\nu_{k+1}+\nu_{k})\nonumber\\
  \le&\ -\frac{(1\!+\!\tau^{-1}\!-\!\tau)\sigma}{1\!-\!\gamma^{-1}\alpha_{k+1}}\|h(x^k,y^{k})\|^2
     -(1+\tau-\tau^2)\big(1\!-\!\frac{\alpha_{k+1}}{\gamma}\big)\|\Delta y^{k+1}\|^2_{\mathcal{T}_g}\nonumber\\
  &\ +\max(\|\mathcal{T}_{f}^{-1}\|,2\|\mathcal{T}_{g}^{-1}\|)\gamma\big(\alpha_{k+1}\!+\mu_{k+1}\big),
 \end{align}
  where the second inequality is using $\mathcal{P}_{\!f}+\frac{1}{2}\Sigma_{\!f}\succeq 0$ and
 \(
  \mathcal{T}_g-(\tau\!-\!1)\tau\sigma\mathcal{B}\mathcal{B}^*
   \succeq (1\!+\tau\!-\tau^2)\mathcal{T}_g.
 \)
  Notice that $\mathcal{W}_k\succ 0$ and $\mathcal{V}_k\succ 0$ for all $k\ge 1$
  due to $\max(\mu_{k},\nu_{k})\le \gamma\min(\frac{1}{6},\frac{2-\tau}{4})$.
  From the definition of $w^{k+1}$ and the expressions of $\mathcal{H}, \mathcal{W}_{k}$ and
  $\mathcal{V}_k$, the left hand side of \eqref{temp-equa30} and \eqref{temp-equa31}
  equals $\|\mathcal{H}w^{k+1}\|^2_{\mathcal{W}_{k+1}}\!-\|\mathcal{H}w^{k}\|^2_{\mathcal{V}_{k+1}}$.
  Along with \eqref{temp-equa30} and \eqref{temp-equa31}, we get \eqref{aim-ineq1}.

  \medskip

 Now by the assumption $\max(\mu_{k},\nu_{k})\le\gamma\min(\frac{1}{6},\frac{2-\tau}{4})$,
 it is not difficult to verify that
 \[
    \frac{1}{1\!-\!\frac{\nu_k}{\gamma}\!-\!\frac{2\mu_k}{\gamma}}\le 1\!+\frac{2(\nu_{k}\!+\!2\mu_k)}{\gamma}
    \ {\rm and}\
    \frac{1}{(1\!-\!\frac{\alpha_{k+1}}{\gamma})(1\!-\!\frac{2}{2-\tau}\frac{\nu_{k}}{\gamma})}
   \le \big(1\!+\frac{2\alpha_{k+1}}{\gamma}\big)\Big[1\!+\frac{4\nu_{k}}{\gamma(2-\tau)}\Big].
 \]
 By the expressions of $\mathcal{V}_{k+1}$ and $\mathcal{W}_{k}$, we have
 \(
   \mathcal{V}_{k+1}\mathcal{W}_{k}^{-1} \preceq \big(1\!+\frac{2\alpha_{k+1}}{\gamma}\big)\Big[1\!+\frac{4\nu_{k}}{\gamma(2-\tau)}\Big]\mathcal{I}.
 \)
 Then
  \begin{equation}\label{HGk}
  \|\mathcal{H}w^{k}\|^2_{\mathcal{V}_{k+1}}
  \!=\langle \mathcal{H}w^{k},\mathcal{V}_{k+1}\mathcal{W}_{k}^{-1}\mathcal{W}_{k}\mathcal{H}w^{k}\rangle
   \le \big(1\!+\frac{2\alpha_{k+1}}{\gamma}\big)\Big[1\!+\frac{4\nu_{k}}{\gamma(2-\tau)}\Big]\|\mathcal{H}w^{k}\|^2_{\mathcal{W}_{k}}.
 \end{equation}
  In addition, since $\max(\mu_{k},\nu_{k})\le\gamma\min(\frac{1}{6},\frac{2-\tau}{4})$ for all $k\ge 1$,
  we have $\frac{2}{3}\le 1\!-\!\frac{\alpha_{k+1}}{\gamma}\le 1$.
  Let $c=\max(\|\mathcal{T}_{g}^{-1}\|,2\|\mathcal{T}_{f}^{-1}\|)$.
  Combining \eqref{HGk} with \eqref{aim-ineq1} yields the desired result.
 \end{proof}

 \medskip

 By Lemma \ref{lemma31-C1}, we may establish the convergence of the IEIDP-ADMM with (C1).
 \begin{theorem}\label{theorem-C1}
  Let $\{(x^{k},y^{k},z^{k})\}_{k\ge 1}$ be the sequence generated by the IEIDP-ADMM with
  (C1) and $\max(\mu_{k},\nu_{k})\le\gamma\min(\frac{1}{6},\frac{2-\tau}{4})$,
  where $\gamma$ is same as that of Lemma \ref{lemma31-C1}.
  Suppose that Assumption \ref{assump} holds and $\mathcal{P}_{\!f}$ and $\mathcal{P}_{\!g}$
  also satisfy $\mathcal{P}_{\!f}+\frac{1}{2}\Sigma_{\!f} \succeq 0$ and
  $\mathcal{P}_{\!g}+\frac{3}{8}\Sigma_{g} \succeq 0$. Then, for
  (a) $\tau\in \big(0,\frac{1+\sqrt{5}}{2}\big)$ or (b) $\tau\in[\frac{1+\sqrt{5}}{2},2)$ but
  $\sum_{k=0}^{\infty}\big(\frac{3\sigma}{2\tau}\|h(x^{k},y^{k})\|^2
      \!+\!\|\Delta y^{k+1}\|^2_{\mathcal{T}_g}\big)<\infty$,
  the sequence $\{(x^{k},y^{k})\}$ converges to an optimal solution of \eqref{prob} and
  the sequence $\{z^{k}\}$ converges to an optimal solution to the dual problem of \eqref{prob}.
 \end{theorem}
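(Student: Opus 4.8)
The plan is to use the recursion from Lemma~\ref{lemma31-C1} together with a standard summability/telescoping argument. First I would record that, under the hypothesis $\sum_{k}\max(\mu_k,\nu_k)<\infty$ in (C1), the multiplicative factors $\big[1+\tfrac{4\nu_k}{\gamma(2-\tau)}\big]\big[1+\tfrac{2(\nu_k+\nu_{k+1})}{\gamma}\big]$ form a sequence whose deviations from $1$ are summable, so the infinite product $\prod_{k\ge 1}$ of these factors converges to a finite positive number; and the additive error terms $c\gamma(\nu_k+\nu_{k+1}+\mu_{k+1})$ are summable as well. A routine lemma on nonnegative sequences satisfying $a_{k+1}\le(1+\epsilon_k)a_k+\delta_k-b_k$ with $\sum\epsilon_k<\infty$ and $\sum\delta_k<\infty$ then gives that $\{\|\mathcal{H}w^k\|^2_{\mathcal{W}_k}\}$ is bounded and that $\sum_k b_k<\infty$, where in case (a) $\tau\in(0,\tfrac{1+\sqrt5}{2})$ forces the coefficient $\min\big[\tfrac{2}{3}\min(\tau,1+\tau-\tau^2),\min(\tau,1+\tau-\tau^2)\big]$ to be strictly positive (this is exactly where the golden-ratio bound enters, since $1+\tau-\tau^2>0 \iff \tau<\tfrac{1+\sqrt5}{2}$), while in case (b) the same summability is assumed directly.

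Next I would extract the consequences of $\sum_k b_k<\infty$, namely
\[
  h(x^k,y^k)\to 0,\qquad \Delta y^{k+1}=y^{k+1}-y^k\to 0 \ \text{in the }\mathcal{T}_g\text{-norm}.
\]
Since $\mathcal{T}_g\succ0$ and $\mathcal{T}_f\succ0$, boundedness of $\|\mathcal{H}w^k\|_{\mathcal{W}_k}^2$ (with $\mathcal{W}_k\to\mathcal{I}$) yields boundedness of $\{x_e^k\}$, $\{y_e^k\}$, $\{z_e^k\}$, hence of $\{(x^k,y^k,z^k)\}$; and from $\Delta y^{k+1}\to0$ together with $h(x^k,y^k)\to0$ and the multiplier update $\Delta z^{k+1}=\tau\sigma h(x^{k+1},y^{k+1})\to0$, one also deduces $\mathcal{A}^*\Delta x^{k+1}=h(x^{k+1},y^{k+1})-h(x^k,y^k)-\mathcal{B}^*\Delta y^{k+1}\to0$, and then $\|\Delta x^{k+1}\|_{\mathcal{P}_f+\tfrac12\Sigma_f}\to0$ from the negative term dropped in Lemma~\ref{Lemma1} (this needs to be carried through the inequalities; alternatively one includes $\|\Delta x^{k+1}\|^2$ in the telescoped quantity). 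Using $\mathcal{T}_f=\mathcal{P}_f+\Sigma_f+\sigma\mathcal{A}\mathcal{A}^*\succ0$ and the two facts $\|\Delta x^{k+1}\|_{\mathcal{P}_f+\tfrac12\Sigma_f}\to0$, $\mathcal{A}^*\Delta x^{k+1}\to0$, I would conclude $\Delta x^{k+1}\to0$.

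Finally I would pass to a subsequential limit. Let $(x^\infty,y^\infty,z^\infty)$ be any accumulation point of the bounded sequence $\{(x^k,y^k,z^k)\}$, along a subsequence $k_j$. The approximate optimality inclusions \eqref{opt-x}--\eqref{opt-y}, rewritten using \eqref{zk}, read
\[
  \xi^{k+1}-\mathcal{A}z^{k+1}+(\tau-1)\sigma\mathcal{A}h(x^{k+1},y^{k+1})+\sigma\mathcal{A}\mathcal{B}^*\Delta y^{k+1}-\mathcal{P}_f\Delta x^{k+1}\in\partial f(x^{k+1}),
\]
and the analogous one for $g$; as $j\to\infty$ along $k_j$, we have $\xi^{k+1}\to0$ (since $\|\xi^{k+1}\|\le\mu_{k+1}\to0$), $\eta^{k+1}\to0$, $h\to0$, $\Delta x^{k+1}\to0$, $\Delta y^{k+1}\to0$, so by outer semicontinuity of the maximal monotone operators $\partial f,\partial g$ we obtain $-\mathcal{A}z^\infty\in\partial f(x^\infty)$, $-\mathcal{B}z^\infty\in\partial g(y^\infty)$, and $h(x^\infty,y^\infty)=0$; thus $(x^\infty,y^\infty,z^\infty)$ satisfies the optimality system \eqref{optimal-cond}, i.e.\ it is a primal-dual solution. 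To upgrade subsequential convergence to full convergence, I would re-run the recursion of Lemma~\ref{lemma31-C1} with the \emph{specific} primal-dual solution $(x^\infty,y^\infty,z^\infty)$ in place of $(x^*,y^*,z^*)$ in the definition of $w^k$; the argument of the first paragraph then shows $\lim_k\|\mathcal{H}w^k\|^2_{\mathcal{W}_k}$ exists, and along $k_j$ this limit is $0$, hence $\|\mathcal{H}w^k\|_{\mathcal{W}_k}\to0$, which (as $\mathcal{H},\mathcal{W}_k$ are uniformly positive definite in the relevant blocks) gives $x^k\to x^\infty$, $y^k\to y^\infty$, $z^k\to z^\infty$. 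I expect the main obstacle to be the bookkeeping in the preceding step: verifying that the error-perturbed recursion still telescopes cleanly when the non-summable "good" terms (the $\|\Delta x^{k+1}\|^2$ and $\|h\|^2$ contributions) are retained with positive coefficients for every $\tau$ in the stated range, and confirming that no block of $\mathcal{W}_k$ degenerates, which is precisely why the conditions $\mathcal{P}_f+\tfrac12\Sigma_f\succeq0$, $\mathcal{P}_g+\tfrac38\Sigma_g\succeq0$ and the smallness bound on $\mu_k,\nu_k$ are imposed.
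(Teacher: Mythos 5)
Your proposal is correct and follows essentially the same route as the paper: telescope the recursion of Lemma \ref{lemma31-C1} using summability of $\{\mu_k\}$, $\{\nu_k\}$ to get boundedness of $\{\mathcal{H}w^k\}$ and $\sum_k R_k<\infty$ (with the golden-ratio threshold entering exactly through $1+\tau-\tau^2>0$), extract a subsequential limit, verify it satisfies \eqref{optimal-cond} via outer semicontinuity of $\partial f,\partial g$, and then re-run the recursion with $(x^\infty,y^\infty,z^\infty)$ in place of $(x^*,y^*,z^*)$ to upgrade to convergence of the whole sequence. The only difference is your extra care with $\Delta x^{k+1}\to 0$ before passing to the limit in \eqref{opt-x}, a point the paper's proof glosses over; this is a reasonable refinement rather than a change of method.
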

 \begin{proof}
  We write $\vartheta_k=[1\!+\frac{4\nu_k}{\gamma(2-\tau)}][1\!+\frac{2(\nu_k+\nu_{k+1})}{\gamma}]$,
  $\varpi_k=c\gamma(\nu_k\!+\nu_{k+1}\!+\mu_{k+1})$ and
  $R_k\!=\frac{3\sigma}{2\tau}\|h(x^{k},y^{k})\|^2\!+\!\|\Delta y^{k+1}\|^2_{\mathcal{T}_g}$
  for $k\ge 1$. By Lemma \ref{lemma31-C1} we have that
  \begin{align*}
  \|\mathcal{H}w^{k+1}\|^2_{\mathcal{W}_{k+1}}\le
   & -\!\min\!\Big[\frac{2}{3}\min(\tau,1\!+\!\tau\!-\!\tau^2),\min(\tau,1\!+\!\tau\!-\!\tau^2)\Big]\Big(\sum_{l=0}^{k-1}\prod_{j=l+1}^{k}\vartheta_jR_l+R_k\Big)\\
   &\ \ +\prod_{j=0}^{k}\vartheta_j\|\mathcal{H}w^{0}\|^2_{\mathcal{W}_{0}}
    +\sum_{l=0}^{k-1}\prod_{j=l+1}^{k}\vartheta_j\varpi_{l}+\varpi_k.
  \end{align*}
  Since $\sum_{k=1}^{\infty}\nu_k<\!\infty$, we have
  \(
     1\le\prod_{k =1}^{\infty}\vartheta_k\le K_1
  \)
  for some $K_1\!\ge 1$. Hence, we have that
  \[
   \left\{\begin{array}{cl}
   \!\|\mathcal{H}w^{k+1}\|^2_{\mathcal{W}_{k+1}}\!\le
    K_1\big(\|\mathcal{H}w^{0}\|^2_{\mathcal{W}_{0}}\!+\!\sum_{l=0}^{k}\varpi_l\big)\!-\frac{2}{3}\min(\tau,1\!+\!\tau\!-\!\tau^2)\sum_{l=0}^{k}R^l
    & {\rm if}\ \tau\in(0,\frac{1+\sqrt{5}}{2}),\\
   \|\mathcal{H}w^{k+1}\|^2_{\mathcal{W}_{k+1}}\le
   K_1\big(\|\mathcal{H}w^{0}\|^2_{\mathcal{W}_{0}}+\!\sum_{l=0}^{k}\varpi_l\big)+K_1(\tau^2\!-\!1\!-\!\tau)\sum_{l=0}^{k}R^l
   &{\rm if}\ \tau\in[\frac{1+\sqrt{5}}{2},2).
   \end{array}\right.
  \]
  Notice that $\sum_{k=1}^{\infty}\max(\mu_k,\nu_{k}) < \infty$ implies
  $\sum_{l=0}^{\infty} \varpi_l\leq K_2$ for some $K_2\ge 0$. Then,
 \[
    \left\{\begin{array}{cl}
    \!\|\mathcal{H}w^{k+1}\|^2_{\mathcal{W}_{k+1}}\!+\frac{2}{3}\min(\tau,1\!+\!\tau\!-\!\tau^2)\sum_{l=0}^{k}R^l\le
   K_1\big(\|\mathcal{H}w^{0}\|^2_{\mathcal{W}_{0}}+K_2\big) & {\rm if}\ \tau\in(0,\frac{1+\sqrt{5}}{2}),\\
   \|\mathcal{H}w^{k+1}\|^2_{\mathcal{W}_{k+1}}\le K_1\big(\|\mathcal{H}w^{0}\|^2_{\mathcal{W}_{0}}+K_2\big)
    +K_1(\tau^2\!-\!1\!-\!\tau)\sum_{l=0}^{k}R^l& {\rm if}\ \tau\in[\frac{1+\sqrt{5}}{2},2).
    \end{array}\right.
  \]
  Notice that $\max(\mu_{k},\nu_k)\le \frac{\gamma}{2\max(3,\frac{2}{2-\tau})}$ implies
  $\mathcal{W}_{k}\succeq\overline{\mathcal{W}}$ for some $\overline{\mathcal{W}}\succ0$.
  Then, under conditions (a) and (b), the last two inequalities imply that
  the sequence $\{\mathcal{H}w^{k}\}$ is bounded and $\sum_{l=0}^{\infty}R_l<+\infty$.
  The latter implies that $\lim_{k\to\infty}R_k=0$, and consequently,
  \begin{align}\label{yzk}
   \lim_{k \rightarrow \infty}\|\Delta y^{k+1}\|_{\mathcal{T}_g} = 0\ \ {\rm and}\ \
   \lim_{k \rightarrow \infty}\|\Delta z^{k}\| = 0.
  \end{align}
  By equality \eqref{zk}, the limits in \eqref{yzk} imply that the sequence
  $\{\|x_e^{k+1}\|_{\mathcal{A}\mathcal{A}^*}\}$ is bounded.
  By the definition of $\mathcal{H}$ and the boundedness of $\{\mathcal{H}w^{k}\}$,
  the sequence $\{\|x_e^{k+1}\|_{\mathcal{P}_{\!f}+\Sigma_{\!f}}\}$ is also bounded.
  Thus, the sequence $\{\|x_e^{k+1}\|_{\mathcal{T}_{\!f}}\}$ is bounded. In addition,
  the boundedness of $\{\mathcal{H}w^{k}\}$ also implies that the sequences $\{\|y_e^{k+1}\|_{\mathcal{T}_{\!g}}\}$
  and $\{\|z_e^{k+1}\|\}$ are bounded. Together with the positive definiteness
  of $\mathcal{T}_{\!f}$ and $\mathcal{T}_{\!g}$, it follows that $\{(x^{k},y^{k},z^{k})\}$ is bounded.
  So, there exists a convergent subsequence, to say $\{(x^{k},y^k,z^{k})\}_{\mathcal{K}}$.
  Without loss of generality, we assume $\{(x^{k},y^k,z^{k})\}_{\mathcal{K}}\to (x^{\infty},y^{\infty},z^{\infty})$.
  Since $\lim_{k \rightarrow \infty}\|z^{k}\!-\!z^{k-1}\| = 0$, we have
  $h(x^{\infty},y^{\infty})=0$. In addition, taking the limit $k\to\infty$ with $k\in\mathcal{K}$
  on the both sides of \eqref{opt-x} and \eqref{opt-y}, and using the closedness of the graphs
  of $\partial f$ and $\partial g$ (see \cite{Roc70}), we have
  \[
    -\!\mathcal{A}^*z^{\infty}\in\partial f(x^{\infty})\ \ {\rm and}\ \
    -\!\mathcal{B}^*z^{\infty}\in\partial g(y^{\infty}).
  \]
  Along with \eqref{optimal-cond}, $(x^{\infty},y^{\infty})$ is an optimal solution of \eqref{prob}
  and $z^{\infty}$ is the associated multiplier.

 \medskip

  Finally, we argue that $(x^{\infty},y^{\infty},z^{\infty})$  is actually the unique limit point
  of $\{(x^k,y^k,z^k)\}$. Recall that $(x^{\infty},y^{\infty})$ is an optimal solution to \eqref{prob}
  and $z^{\infty}$ is the associated multiplier. Hence, we could replace $(x^{*},y^{*},z^*)$
  with  $(x^{\infty},y^{\infty},z^{\infty})$ in the previous arguments,
  starting from \eqref{opt-x} and \eqref{opt-y}. Thus, inequality \eqref{aim-ineq1}
  still holds with  $(x^{*},y^{*},z^{*})$ replaced by $(x^{\infty},y^{\infty},z^{\infty})$.
  Hence, from the definition of $w^k$, $\sum_{k=1}^{\infty}\max(\mu_k,\nu_k)<\infty$ and equation \eqref{yzk},
  we have $\lim_{k\to\infty,k\in\mathcal{K}}\|\mathcal{H}w^k\|_{\mathcal{W}_k}
  \le \lim_{k\to\infty,k\in\mathcal{K}}\|\mathcal{H}w^k\|_{\widetilde{\mathcal{W}}}=0$ where
  \[
    \widetilde{\mathcal{W}}={\rm Diag}\Big(\mathcal{I},\ \mathcal{I},\ \mathcal{I},\ \mathcal{I},\ (2-\tau)\mathcal{I}\Big).
  \]
  This means that for any $\varepsilon >0$, there exists a sufficiently large $k_{0}\in \mathcal{K}$ such that
  $\|\mathcal{H}w^{k_0}\|_{\mathcal{W}_{k_0}} < \frac{\varepsilon}{2K_1}$ and
  $\sum_{l=k_{0}}^{\infty}\varpi_l < \frac{\varepsilon}{2K_1}$.
  By Lemma \ref{lemma31-C1}, for any $k\geq k_0$ we have
  \begin{align*}
    \|\mathcal{H}w^{k+1}\|^2_{\overline{\mathcal{W}}}
  \le\|\mathcal{H}w^{k+1}\|^2_{\mathcal{W}_{k+1}}
  &\le\prod_{j=k_0}^{k}\vartheta_j\|\mathcal{H}w^{k_0}\|^2_{\mathcal{W}_{k_0}}
    +\sum_{l=k_0}^{k-1}\prod_{j=k_0+1}^{k}\vartheta_j\varpi_l+\varpi_k\nonumber\\
  &\le  K_1\|\mathcal{H}w^{k_0}\|^2_{\mathcal{W}_{k_0}}+  K_1\sum_{l=k_0}^{k}\varpi_l
  \leq \varepsilon.
  \end{align*}
  This, by the positive definiteness of $\overline{\mathcal{W}}$, shows that
  $\lim_{k\to\infty}\mathcal{H}w^{k+1}= 0$. Consequently,
  \[
    \lim_{k\to\infty}\|x^{k+1}\!-\!x^{\infty}\|_{\mathcal{P}_{\!f}+\Sigma_{f}} = 0,
    \lim_{k\to\infty}\|y^{k+1}\!-\!y^{\infty}\|_{\mathcal{T}_{g}} = 0
    \ {\rm and}\ \lim_{k\to\infty}\|z^{k+1}\!-\!z^{\infty}\| = 0.
  \]
  Combining $\lim_{k\to\infty}\|y^{k+1}\!-\!y^{\infty}\|_{\mathcal{T}_{g}} = 0$
  with $\mathcal{T}_{g}\succ 0$ yields $\lim_{k\to\infty}\|y^{k+1}\!-\!y^{\infty}\|= 0$.
  In addition, the second limit in \eqref{yzk} implies $\lim_{k\to\infty}h(x^k,y^k)=0$.
  Together with
  \[
   \|x^{k+1}\!-\!x^{\infty}\|^2_{\mathcal{A}\mathcal{A}^*}
  \le 2\|y^{k+1}\!-\!y^{\infty}\|^2_{\mathcal{B}\mathcal{B}^*}+2\|h(x^{k+1},y^{k+1})\|^2,
  \]
  we obtain $\lim_{k\to\infty}\|x^{k+1}\!-\!x^{\infty}\|^2_{\mathcal{A}\mathcal{A}^*}= 0$.
  Noting that $\lim_{k\to\infty}\|x^{k+1}\!-\!x^{\infty}\|_{\mathcal{P}_{\!f}+\Sigma_{f}}=0$,
  we have $\lim_{k\to\infty}\|x^{k+1}\!-\!x^{\infty}\|^2_{\mathcal{T}_f}= 0$.
  By the positive definiteness of $\mathcal{T}_{f}$, it follows that $\lim_{k\to\infty}\|x^{k}\!-\!x^{\infty}\|=0$.
  Thus, $\lim_{k\to\infty}x^{k} = x^{\infty},\lim_{k\to\infty}y^{k} = y^{\infty}$
  and $\lim_{k\to\infty}z^{k} = z^{\infty}$. That is, $(x^{\infty},y^{\infty},z^{\infty})$ is
  the unique limit point of $\{(x^k,y^k,z^k)\}$.
 \end{proof}

 \begin{remark}
  Theorem \ref{theorem-C1} shows that one can establish the convergence of $\{(x^{k},y^{k},z^{k})\}$
  generated by the IEIDP-ADMM with (C1) if $\mathcal{P}_{\!f}$ and $\mathcal{P}_{\!g}$ are chosen such that
 \begin{align*}
  \mathcal{P}_{\!f}+\frac{1}{2}\Sigma_{f}\succeq 0,\ \mathcal{P}_{g}+\frac{3}{8}\Sigma_{g}\succeq 0,\
  \Sigma_{f}+\mathcal{P}_{\!f}+\sigma\mathcal{A}\mathcal{A}^* \succ 0,\ \Sigma_{g}+\mathcal{P}_{g}+\sigma\mathcal{B}\mathcal{B}^* \succ 0.
  \end{align*}
  In fact, using the same arguments, one can get the convergence of $\{(x^{k},y^{k},z^{k})\}$
  generated by the IEIDP-ADMM with (C1) if $\mathcal{P}_f$ and $\mathcal{P}_g$ are chosen such that
  for some $a\in [\frac{1}{2},1)$,
 \begin{align*}
   \mathcal{P}_{\!f}\!+\!\frac{1}{2}\Sigma_{f}\succeq 0,\ \mathcal{P}_{g}\!+\!\frac{1}{2}\Sigma_{g}\succeq 0,\
  \Sigma_{f}\!+\!\mathcal{P}_{\!f}\!+\!\sigma\mathcal{A}\mathcal{A}^* \succ 0,\ a\Sigma_{g}\!+\!\mathcal{P}_{g}\!+\!\sigma\mathcal{B}\mathcal{B}^* \succ 0.
 \end{align*}
 \end{remark}

 \subsection{Convergence of the IEIDP-ADMM with (C2)}\label{subsec4.2}

  For each $k\ge 1$, we write $w^{k}:=(x_e^{k};y_e^{k};z_e^{k};\Delta y^{k};\Delta y^{k};\Delta z^{k})$,
  and let $\mathcal{H}\!:\mathbb{X}\times\mathbb{Y}\times\mathbb{Z}\times\mathbb{Y}\times\mathbb{Y}
  \times\mathbb{Z}\to \mathbb{X}\times\mathbb{Y}\times\mathbb{Z}\times\mathbb{Y}\times\mathbb{Y}\times\mathbb{Z}$
  be the block diagonal linear operator defined by
  \[
    \mathcal{H}:={\rm Diag}\Big((\mathcal{P}_{\!f}\!+\!\Sigma_{\!f})^{1/2},\,(\mathcal{T}_g)^{1/2},\,
    \frac{1}{\sqrt{\tau\sigma}}\mathcal{I},\,(\mathcal{P}_{\!g}\!+\!{\textstyle\frac{3}{4}}\Sigma_g)^{1/2},
    \,(\mathcal{T}_g)^{1/2},\,\frac{1}{\tau\sqrt{\sigma}}\mathcal{I}\Big)
  \]
  for the proximal operators $\mathcal{P}_{\!f}$ and $\mathcal{P}_{\!g}$ satisfying $\mathcal{P}_{\!f}+\Sigma_{\!f}\succeq 0$
  and $\mathcal{P}_{\!g}\!+\!\frac{3}{4}\Sigma_{\!g}\succeq 0$.
  To establish the convergence of the IEIDP-ADMM with (C2), we need the following lemma.
 \begin{lemma}\label{lemma31-C2}
  Let $\{(x^{k},y^{k},z^{k})\}_{k\ge 1}$ be the sequence given by the IEIDP-ADMM with
  (C2) and $\max(\mu_k,\nu_{k})\le \min(0.1,\frac{2-\tau}{4})$. Suppose that Assumption \ref{assump}
  holds and $\mathcal{P}_{\!f}$ and $\mathcal{P}_{\!g}$ also satisfy
  $\mathcal{P}_{\!f}+\frac{3}{8}\Sigma_{\!f} \succeq 0$ and
  $\mathcal{P}_{\!g}+\frac{3}{8}\Sigma_{\!g}\succeq 0$.
  Then, when $\tau\in (0,2)$, for all $k\ge 1$ we have
  \begin{align*}
  &\|\mathcal{H}w^{k+1}\|^2_{\mathcal{W}_{k+1}}\le
  \big[1\!+10\nu_k\!+40(\mu_{k+1}\!+\!\nu_{k+1})\big]\Big[1\!+\frac{12(\mu_k+2\mu_{k+1})}{2-\tau}\Big]\|\mathcal{H}w^{k}\|^2_{\mathcal{W}_{k}}\nonumber\\
  &\qquad\qquad -\!\min\!\big(0.1\min(\tau,1\!+\tau\!-\tau^2),\min(\tau,1\!+\tau\!-\tau^2)\big)
     \Big(\frac{10\sigma}{\tau}\|h(x^{k},y^{k})\|^2+\|\Delta y^{k+1}\|^2_{\mathcal{T}_g}\Big)
  \end{align*}
  where the operator $\mathcal{W}_{k}\!:\mathbb{X}\times\mathbb{Y}\times\mathbb{Z}
  \times\mathbb{Y}\times\mathbb{Y}\times\mathbb{Z}\to \mathbb{X}\times\mathbb{Y}\times\mathbb{Z}
  \times\mathbb{Y}\times\mathbb{Y}\times\mathbb{Z}$ is defined by
  \[
   \mathcal{W}_k:={\rm Diag}\big((1\!-\mu_k)\mathcal{I},\,(1\!-5\mu_{k}\!-\nu_k)\mathcal{I},\
                   \mathcal{I},\ \mathcal{I},\ \nu_k\mathcal{I},\ (2\!-\!\tau-\!3\mu_k)\mathcal{I}\big).
  \]
 \end{lemma}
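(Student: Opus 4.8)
The plan is to follow the scheme of Lemma \ref{lemma31-C1} almost verbatim, but to start from Proposition \ref{prop21}(b) in place of \ref{prop21}(a). For each $k\ge 1$ I would introduce an auxiliary block-diagonal operator $\mathcal{V}_k$ that coincides with $\mathcal{W}_k$ in all blocks except the last ($\Delta z$) one, whose last block is a scalar multiple of $\mathcal{I}$ chosen precisely so that, after using \eqref{zk} to replace $\sigma\|h(x^{k},y^{k})\|^2$ by $(\tau^2\sigma)^{-1}\|\Delta z^{k}\|^2$, the $\|\Delta z^{k}\|^2$-terms appearing on the right of Proposition \ref{prop21}(b) are exactly $\|\mathcal{H}w^{k}\|^2_{\mathcal{V}_{k+1}}$. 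The genuinely new feature relative to (C1) is the extra term $-6\mu_{k+1}\sigma\|h(x^{k},y^{k})\|^2$ on the left of Proposition \ref{prop21}(b): I would move it to the right and again rewrite it as a multiple of $\|\Delta z^{k}\|^2$ via \eqref{zk}, so that it is folded into the last block of $\mathcal{V}_{k+1}$ together with the cross-term residue; this enlargement of the last block is what ultimately produces the factor $1+\tfrac{12(\mu_k+2\mu_{k+1})}{2-\tau}$.

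Next, as in Lemma \ref{lemma31-C1}, I would split into the cases $\tau\in(0,1]$ and $\tau\in(1,2)$ and estimate the cross term $2(1-\tau)\sigma\langle h(x^{k},y^{k}),\mathcal{B}^*\Delta y^{k+1}\rangle$ by \eqref{Cauchy}, splitting it between a $\|h(x^{k},y^{k})\|^2$ piece and a $\|\Delta y^{k+1}\|^2_{\sigma|1-\tau|\mathcal{B}\mathcal{B}^*}$ piece, the weight $\beta_{k+1}$ in this split being built from $\nu_k+4\nu_{k+1}+4\mu_{k+1}$ so that it merges cleanly with the $-(1-\nu_k-4\nu_{k+1}-4\mu_{k+1})\|\Delta y^{k+1}\|^2_{\mathcal{T}_g}$ already present in Proposition \ref{prop21}(b); then $\mathcal{T}_g-\sigma(1-\tau)\mathcal{B}\mathcal{B}^*\succeq\tau\mathcal{T}_g$ (for $\tau\le1$) and $\mathcal{T}_g-\tau(\tau-1)\sigma\mathcal{B}\mathcal{B}^*\succeq(1+\tau-\tau^2)\mathcal{T}_g$ (for $\tau>1$) yield the $\min(\tau,1+\tau-\tau^2)$ coefficient in front of $\tfrac{\text{const}\cdot\sigma}{\tau}\|h(x^{k},y^{k})\|^2+\|\Delta y^{k+1}\|^2_{\mathcal{T}_g}$. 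At this point I must verify the positivity of the remaining operator coefficients: the $\|\Delta x^{k+1}\|^2$-term carries $\mathcal{P}_{\!f}+\tfrac12\Sigma_{\!f}-\mu_{k+1}(\mathcal{P}_{\!f}+\Sigma_{\!f})=(1-\mu_{k+1})(\mathcal{P}_{\!f}+\tfrac38\Sigma_{\!f})+(\tfrac18-\tfrac58\mu_{k+1})\Sigma_{\!f}\succeq0$, which holds because $\mathcal{P}_{\!f}+\tfrac38\Sigma_{\!f}\succeq0$ and $\mu_{k+1}\le0.1<\tfrac15$; likewise the residual coefficient of $\|\Delta y^{k+1}\|^2_{\mathcal{T}_g}$ and all diagonal entries of $\mathcal{W}_k$ and $\mathcal{V}_k$ are bounded away from $0$ under $\max(\mu_k,\nu_k)\le\min(0.1,\tfrac{2-\tau}{4})$ (so $3\mu_k<2-\tau$ and $5\mu_k+\nu_k<1$), giving $\mathcal{W}_k\succ0$, $\mathcal{V}_k\succ0$, and the bound $1-\beta_{k+1}\ge\tfrac1{10}$ that is responsible for the constants $0.1$ and $\tfrac{10}{\tau}$ in the statement. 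This produces the one-step estimate $\|\mathcal{H}w^{k+1}\|^2_{\mathcal{W}_{k+1}}-\|\mathcal{H}w^{k}\|^2_{\mathcal{V}_{k+1}}\le-\min\!\big(0.1\min(\tau,1+\tau-\tau^2),\min(\tau,1+\tau-\tau^2)\big)\big(\tfrac{10\sigma}{\tau}\|h(x^{k},y^{k})\|^2+\|\Delta y^{k+1}\|^2_{\mathcal{T}_g}\big)$.

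Finally I would compare $\mathcal{V}_{k+1}$ with $\mathcal{W}_k$ block by block: using elementary inequalities of the type $\tfrac{1}{1-5\mu_k-\nu_k}\le 1+10\nu_k+40(\mu_{k+1}+\nu_{k+1})$ and $\tfrac{1}{(1-\beta_{k+1})(1-\frac{3\mu_k}{2-\tau})}\le[1+10\nu_k+40(\mu_{k+1}+\nu_{k+1})][1+\tfrac{12(\mu_k+2\mu_{k+1})}{2-\tau}]$, all valid because every $\mu,\nu$ is $\le0.1$, I get $\mathcal{V}_{k+1}\mathcal{W}_k^{-1}\preceq[1+10\nu_k+40(\mu_{k+1}+\nu_{k+1})][1+\tfrac{12(\mu_k+2\mu_{k+1})}{2-\tau}]\mathcal{I}$, hence $\|\mathcal{H}w^{k}\|^2_{\mathcal{V}_{k+1}}\le(\text{that factor})\|\mathcal{H}w^{k}\|^2_{\mathcal{W}_k}$; substituting this into the one-step estimate gives the claimed inequality. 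I expect the main obstacle to be purely the bookkeeping of constants: pinning down the exact scalar in the last block of $\mathcal{V}_k$ so that $\|\Delta z^{k}\|^2$, the relocated $6\mu_{k+1}\sigma\|h(x^{k},y^{k})\|^2$ term and the cross-term residue all land consistently inside $\|\mathcal{H}w^{k}\|^2_{\mathcal{V}_{k+1}}$, and then checking that the resulting comparison factor is precisely the one stated — the semidefiniteness verifications themselves are routine given the smallness hypotheses.
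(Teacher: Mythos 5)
Your proposal reconstructs the paper's own proof essentially step for step: the paper likewise starts from Proposition \ref{prop21}(b), introduces an auxiliary operator $\mathcal{V}_k={\rm Diag}\big(\mathcal{I},\mathcal{I},\mathcal{I},\mathcal{I},\nu_{k-1}\mathcal{I},(\tfrac{2-\tau}{1-\alpha_k}+6\mu_k)\mathcal{I}\big)$ with $\alpha_k=\nu_{k-1}+4(\nu_k+\mu_k)$ whose enlarged last block absorbs the relocated $6\mu_{k+1}\sigma\|h(x^k,y^k)\|^2$ term via \eqref{zk} exactly as you describe, splits into $\tau\in(0,1]$ and $\tau\in(1,2)$ with the same Cauchy--Schwarz weight $1-\alpha_{k+1}\in[0.1,1]$, uses the same operator inequalities $\mathcal{T}_g-(1-\tau)\sigma\mathcal{B}\mathcal{B}^*\succeq\tau\mathcal{T}_g$, $\mathcal{T}_g-(\tau-1)\tau\sigma\mathcal{B}\mathcal{B}^*\succeq(1+\tau-\tau^2)\mathcal{T}_g$ and $\mathcal{P}_{\!f}+\tfrac12\Sigma_{\!f}-\mu_{k+1}(\mathcal{P}_{\!f}+\Sigma_{\!f})\succeq0$ (your explicit decomposition of the latter is a nice addition the paper omits), and closes with the same $\mathcal{V}_{k+1}\mathcal{W}_k^{-1}$ comparison. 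The one caveat is shared with the paper rather than introduced by you: the second-block comparison $\tfrac{1}{1-5\mu_k-\nu_k}\le\big[1+10\nu_k+40(\mu_{k+1}+\nu_{k+1})\big]\big[1+\tfrac{12(\mu_k+2\mu_{k+1})}{2-\tau}\big]$, which you state explicitly and the paper needs implicitly, fails for some admissible parameters (e.g.\ $\mu_k=0.1$, $\nu_k=\mu_{k+1}=\nu_{k+1}=0$, $\tau$ small gives $2\not\le1.6$), so that bookkeeping step is loose in both write-ups and would need a slightly larger comparison factor to be airtight.
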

 \begin{proof}
  Let $\mathcal{V}_{k}\!:\mathbb{X}\times\mathbb{Y}\times\mathbb{Z}\times\mathbb{Y}\times\mathbb{Y}
  \times\mathbb{Z}\to \mathbb{X}\times\mathbb{Y}\times\mathbb{Z}\times\mathbb{Y}\times\mathbb{Y}\times\mathbb{Z}$
  for $k\ge 1$ be defined by
  \[
   \mathcal{V}_k={\rm Diag}\Big(\mathcal{I},\ \mathcal{I},\ \mathcal{I},\ \mathcal{I},\ \nu_{k-1}\mathcal{I},\ \big(\frac{2\!-\!\tau}{1\!-\alpha_{k}}+6\mu_{k}\big)\mathcal{I}\Big)
   \ \ {\rm with}\ \alpha_{k}=\nu_{k-1}+4(\nu_k+\mu_k).
  \]
  With the notations $\mathcal{W}_k$ and $\mathcal{V}_k$, we first establish the following important inequality:
  \begin{align}\label{aim-ineq2}
  &\|\mathcal{H}w^{k+1}\|^2_{\mathcal{W}_{k+1}}\!-\!\|\mathcal{H}w^{k}\|^2_{\mathcal{V}_{k+1}}\nonumber\\
  &\le -\min(\tau,1+\tau-\!\tau^2)\left[\frac{\sigma}{\tau(1\!-\!\alpha_{k+1})}\|h(x^{k},y^{k})\|^2
       +\big(1\!-\!\alpha_{k+1}\big)\|\Delta y^{k+1}\|^2_{\mathcal{T}_{g}}\right].
  \end{align}
  Indeed, when $\tau\in (0,1]$, since $\alpha_{k+1}\le0.9$ by $\max(\mu_k,\nu_{k})\le 0.1$,
  it follows from \eqref{Cauchy} that
  \begin{align*}
   2(1\!-\!\tau)\sigma\langle h(x^{k},y^{k}), \mathcal{B}^*\Delta y^{k+1}\rangle
   &\le \frac{\sigma(1\!-\!\tau)}{1\!-\!\alpha_{k+1}}\| h(x^{k},y^{k})\|^2 + (1\!-\!\alpha_{k+1})\|\Delta y^{k+1}\|^2_{(1\!-\!\tau)\sigma\mathcal{B}\mathcal{B}^*}.
 \end{align*}
  Substituting the last inequality into Proposition \ref{prop21}(b) and using \eqref{zk}, we obtain that
  \begin{align}\label{temp-equaC21}
  &\big(1\!-\!\mu_{k+1}\big)\|x_e^{k+1}\|_{\mathcal{P}_f+\Sigma_{f}}^2\!-\!\|x_e^{k}\|_{\mathcal{P}_f+\Sigma_{f}}^2
      +\big(1\!-\!\nu_{k+1}-5\mu_{k+1}\big)\|y_e^{k+1}\|^2_{\mathcal{T}_g}-\|y_e^{k}\|^2_{\mathcal{T}_{g}}\nonumber\\
 & +(\tau\sigma)^{-1}\big(\|z_e^{k+1}\|^2\!-\|z_e^k\|^2\big)
   +\|\Delta y^{k+1}\|^2_{\mathcal{P}_{g}+\frac{3}{4}\Sigma_{g}}
    \!-\|\Delta y^k\|^2_{\mathcal{P}_{g}+\frac{3}{4}\Sigma_{g}}\nonumber\\
 &  +\nu_{k+1}\|\Delta y^{k+1}\|^2_{\mathcal{T}_{g}}-\nu_{k}\|\Delta y^{k}\|^2_{\mathcal{T}_{g}}
    +\big(2\!-\!\tau-3\mu_{k+1}\big)\sigma\|h(x^{k+1},y^{k+1})\|^2 \nonumber\\
 &\!-\big(\frac{2\!-\!\tau}{1\!-\!\alpha_{k+1}}+6\mu_{k+1}\big)\frac{1}{\tau^2\sigma}\|\Delta z^k\|^2
    \nonumber\\
 \le& -\frac{\sigma\|h(x^{k},y^{k})\|^2}{1\!-\!\alpha_{k+1}}
     -\!\big(1\!-\!\alpha_{k+1}\big)\|\Delta y^{k+1}\|^2_{\mathcal{T}_{g}-(1-\tau)\sigma\mathcal{B}\mathcal{B}^*}
     -\|\Delta x^{k+1}\|_{\mathcal{P}_{\!f}+\frac{1}{2}\Sigma_{\!f}-\mu_{k+1}(\mathcal{P}_{\!f}+\Sigma_{\!f})}^2\nonumber\\
 \le & -\frac{\sigma}{1\!-\!\alpha_{k+1}}\|h(x^{k},y^{k})\|^2
       -\tau\big(1\!-\!\alpha_{k+1}\big)\|\Delta y^{k+1}\|^2_{\mathcal{T}_{g}},
 \end{align}
 where the last inequality is using $\mathcal{P}_{\!f}+\frac{1}{2}\Sigma_{\!f}-\mu_{k+1}(\mathcal{P}_{\!f}+\Sigma_{\!f})\succeq 0$
 and
 \(
   \mathcal{T}_g-(1\!-\tau)\sigma\mathcal{B}\mathcal{B}^*\succeq \tau\mathcal{T}_g,
 \)
 implied by $\mathcal{P}_{\!f}+\frac{3}{8}\Sigma_{f} \succeq 0$ and $\mu_{k+1}\le 0.1$.
 When $\tau\in(1,2)$, by \eqref{Cauchy} and $\max(\mu_k,\nu_{k})\le 0.1$,
 \begin{align*}
   2(1\!-\!\tau)\sigma\langle h(x^{k},y^{k}), \mathcal{B}^*(y^{k+1}\!-\!y^{k})\rangle
  \le \frac{\sigma\| h(x^{k},y^{k})\|^2}{1\!-\!\alpha_{k+1}}\frac{\tau\!-\!1}{\tau}
      + (1\!-\!\alpha_{k+1})\|\Delta y^{k+1}\|^2_{\sigma(\tau\!-\!1)\tau\mathcal{B}\mathcal{B}^*}.
 \end{align*}
 Substituting the last inequality into Proposition \ref{prop21}(b) and using \eqref{zk}, we obtain that
 \begin{align}\label{temp-equaC22}
  &\big(1\!-\!\mu_{k+1}\big)\|x_e^{k+1}\|_{\mathcal{P}_f+\Sigma_{f}}^2\!-\!\|x_e^{k}\|_{\mathcal{P}_f+\Sigma_{f}}^2
      +\big(1\!-\!\nu_{k+1}-5\mu_{k+1}\big)\|y_e^{k+1}\|^2_{\mathcal{T}_g}-\|y_e^{k}\|^2_{\mathcal{T}_{g}}\nonumber\\
 & +(\tau\sigma)^{-1}\big(\|z_e^{k+1}\|^2\!-\|z_e^k\|^2\big)
   +\|\Delta y^{k+1}\|^2_{\mathcal{P}_{g}+\frac{3}{4}\Sigma_{g}}
    \!-\|\Delta y^k\|^2_{\mathcal{P}_{g}+\frac{3}{4}\Sigma_{g}}\nonumber\\
 &  +\nu_{k+1}\|\Delta y^{k+1}\|^2_{\mathcal{T}_{g}}-\nu_{k}\|\Delta y^{k}\|^2_{\mathcal{T}_{g}}
    +\big(2\!-\!\tau-3\mu_{k+1}\big)\sigma\|h(x^{k+1},y^{k+1})\|^2 \nonumber\\
 &\!-\big(\frac{2\!-\!\tau}{1\!-\!\alpha_{k+1}}+6\mu_{k+1}\big)\frac{1}{\tau^2\sigma}\|\Delta z^k\|^2
    \nonumber\\
 \le& -\!\frac{(1\!+\tau^{-1}\!-\tau)\sigma\|h(x^{k},y^{k})\|^2}{1\!-\!\alpha_{k+1}}
     -\!\big(1\!-\!\alpha_{k+1}\big)\|\Delta y^{k+1}\|^2_{\mathcal{T}_{g}-(\tau-1)\tau\sigma\mathcal{B}\mathcal{B}^*}\nonumber\\
  &\ -\|\Delta x^{k+1}\|_{\mathcal{P}_{\!f}+\frac{1}{2}\Sigma_{\!f}-\mu_{k+1}(\mathcal{P}_{\!f}+\Sigma_{\!f})}^2\nonumber\\
 \le& -\!\frac{(1\!+\tau^{-1}\!-\tau)\sigma\|h(x^{k},y^{k})\|^2}{1\!-\!\alpha_{k+1}}
      -(1+\tau-\tau^2)\!\big(1\!-\!\alpha_{k+1}\big)\|\Delta y^{k+1}\|^2_{\mathcal{T}_{g}},
 \end{align}
 where the last inequality is due to
 $\mathcal{P}_{\!f}+\frac{1}{2}\Sigma_{\!f}-\mu_{k+1}(\mathcal{P}_{\!f}+\Sigma_{\!f})\succeq 0$ by
 $\mu_{k+1}\le 0.1$, and
  \[
   \mathcal{T}_{g}-(\tau\!-\!1)\tau\sigma\mathcal{B}\mathcal{B}^*\succeq (1\!+\tau\!-\tau^2)\mathcal{T}_g.
  \]
 By the definitions of the vector $w^{k+1}$ and the operators $\mathcal{H}, \mathcal{W}_{k}$
 and $\mathcal{V}_k$, the left hand side of \eqref{temp-equaC21} and \eqref{temp-equaC22} is $\|\mathcal{H}w^{k+1}\|^2_{\mathcal{W}_{k+1}}\!-\!\|\mathcal{H}w^{k}\|^2_{\mathcal{V}_{k+1}}$.
 Along with \eqref{temp-equaC21} and \eqref{temp-equaC22}, we get \eqref{aim-ineq2}.

 \medskip

 Since $0\le \max(\mu_k,\nu_{k})\leq \min(0.1,\frac{2-\tau}{4})$ for all $k\ge 1$, it is not difficult to check that
 \[
   \frac{1}{1-\alpha_{k+1}}\le 1\!+10\nu_k\!+40(\mu_{k+1}\!+\!\nu_{k+1})
   \ {\rm and}\ \frac{1}{1\!-\!\frac{3\mu_{k}}{2\!-\!\tau}}\le 1\!+\frac{12\mu_k}{2-\tau},
 \]
 which in turn implies that
 \(
    \frac{2-\tau}{(1-\alpha_{k+1})(1\!-\!\frac{3\mu_{k}}{2\!-\!\tau})}
    \le \big[1\!+10\nu_k\!+40(\mu_{k+1}\!+\!\nu_{k+1})\big]\big(1\!+\frac{12\mu_k}{2-\tau}\big)
 \)
 and
 $\frac{6\mu_{k+1}}{2-\tau\!-\!3\mu_{k}}\le \frac{24\mu_{k+1}}{2-\tau}$.
 Together with the expression of $\mathcal{V}_{k+1}\mathcal{W}_{k}^{-1}$, we obtain that
  \begin{equation}\label{HGk-C2}
  \mathcal{V}_{k+1}\mathcal{W}_{k}^{-1} \preceq \big[1\!+10\nu_k\!+40(\mu_{k+1}\!+\!\nu_{k+1})\big]\big(1\!+\frac{12(\mu_k+2\mu_{k+1})}{2-\tau}\big)\mathcal{I}.
 \end{equation}
 Since $\nu_{k}\le 0.1$ for all $k\ge 1$, we have $0.1\le 1\!-\alpha_{k+1}\le 1$.
 Now combining \eqref{HGk-C2} with inequality \eqref{aim-ineq2} yields the desired result.
 Thus, we complete the proof.
 \end{proof}

 By Lemma \ref{lemma31-C2} one may obtain the following convergence result of the IEIDP-ADMM
 with the criterion (C2). Since the proof is similar to that of Theorem \ref{theorem-C1}, we omit it.
 \begin{theorem}\label{theorem-C2}
  Let $\{(x^{k},y^{k},z^{k})\}_{k\ge 1}$ be the sequence generated by the IEIDP-ADMM with
  the criterion (C2) and $\max(\mu_k,\nu_{k})\le \min(0.1,\frac{2-\tau}{4})$.
  Suppose that Assumption \ref{assump} holds and the operators $\mathcal{P}_{\!f}$ and $\mathcal{P}_{\!g}$
  also satisfy $\mathcal{P}_{\!f}+\frac{3}{8}\Sigma_{f}\succeq 0$ and $\mathcal{P}_{g}+\frac{3}{8}\Sigma_{g} \succeq 0$.
  Then, for (a) $\tau\in \big(0,\frac{1+\sqrt{5}}{2}\big)$ or (b) $\tau\in[\frac{1+\sqrt{5}}{2},2)$ but
  $\sum_{k=0}^{\infty}\big(\frac{10\sigma}{\tau}\|h(x^{k},y^{k})\|^2\!+\!\|\Delta y^{k+1}\|^2_{\mathcal{T}_g}\big)<\infty$,
  the sequence $\{(x^{k},y^{k})\}$ converges to an optimal solution of problem
  \eqref{prob} and the sequence $\{z^{k}\}$ converges to an optimal solution to the dual problem of \eqref{prob}.
 \end{theorem}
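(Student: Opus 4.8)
The plan is to follow the scheme used for Theorem~\ref{theorem-C1}, now driven by the one-step recursion of Lemma~\ref{lemma31-C2}. Write $\vartheta_k:=\big[1+10\nu_k+40(\mu_{k+1}+\nu_{k+1})\big]\big[1+\frac{12(\mu_k+2\mu_{k+1})}{2-\tau}\big]$, $c_\tau:=\min\big(0.1\min(\tau,1+\tau-\tau^2),\,\min(\tau,1+\tau-\tau^2)\big)$ and $R_k:=\frac{10\sigma}{\tau}\|h(x^k,y^k)\|^2+\|\Delta y^{k+1}\|^2_{\mathcal{T}_g}$. Iterating Lemma~\ref{lemma31-C2} gives, for all $k\ge1$,
\[
 \|\mathcal{H}w^{k+1}\|^2_{\mathcal{W}_{k+1}}
 \;\le\;\Big(\prod_{j=1}^{k}\vartheta_j\Big)\|\mathcal{H}w^{1}\|^2_{\mathcal{W}_{1}}
 \;-\;c_\tau\sum_{l=1}^{k}\Big(\prod_{j=l+1}^{k}\vartheta_j\Big)R_l .
\]
Since $\sum_k\max(\mu_k,\nu_k)<\infty$, the partial products $\prod_j\vartheta_j$ lie in $[1,K_1]$ for some $K_1\ge1$. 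For $\tau\in(0,\frac{1+\sqrt5}{2})$ one has $1+\tau-\tau^2>0$, hence $c_\tau>0$ and $\|\mathcal{H}w^{k+1}\|^2_{\mathcal{W}_{k+1}}+c_\tau\sum_{l=1}^kR_l\le K_1\|\mathcal{H}w^1\|^2_{\mathcal{W}_1}$; for $\tau\in[\frac{1+\sqrt5}{2},2)$ one has, by the golden-ratio identity, $c_\tau=1+\tau-\tau^2\le0$ and $\|\mathcal{H}w^{k+1}\|^2_{\mathcal{W}_{k+1}}\le K_1\|\mathcal{H}w^1\|^2_{\mathcal{W}_1}+K_1(\tau^2-1-\tau)\sum_{l=1}^kR_l$, the last sum being finite by hypothesis~(b). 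In either case $\{\|\mathcal{H}w^k\|^2_{\mathcal{W}_k}\}$ is bounded and $\sum_kR_k<\infty$, so $R_k\to0$; in particular $\|\Delta y^{k+1}\|_{\mathcal{T}_g}\to0$ and, via \eqref{zk}, $\|\Delta z^k\|\to0$.

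Next I would extract a primal--dual limit point exactly as in Theorem~\ref{theorem-C1}. Since $\max(\mu_k,\nu_k)\le\min(0.1,\frac{2-\tau}{4})$, the coefficients $1-\mu_k$, $1-5\mu_k-\nu_k$, $2-\tau-3\mu_k$ in $\mathcal{W}_k$ are bounded below by positive constants (the only degenerating slot, $\nu_k\|\Delta y^k\|^2_{\mathcal{T}_g}$, is harmless since $\|\Delta y^{k}\|_{\mathcal{T}_g}\to0$), so boundedness of $\{\|\mathcal{H}w^k\|^2_{\mathcal{W}_k}\}$ bounds $\{\|x_e^k\|_{\mathcal{P}_f+\Sigma_f}\}$, $\{\|y_e^k\|_{\mathcal{T}_g}\}$, $\{\|z_e^k\|\}$ and $\{\|\Delta z^k\|\}$; feeding $\|\Delta z^k\|\to0$ and $\|\Delta y^k\|_{\mathcal{T}_g}\to0$ into \eqref{zk} bounds $\{\|x_e^k\|_{\sigma\mathcal{A}\mathcal{A}^*}\}$, hence $\{\|x_e^k\|_{\mathcal{T}_f}\}$ is bounded and, $\mathcal{T}_f,\mathcal{T}_g\succ0$, the sequence $\{(x^k,y^k,z^k)\}$ is bounded. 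Taking $(x^k,y^k,z^k)_{\mathcal K}\to(x^\infty,y^\infty,z^\infty)$, the limit $\|\Delta z^k\|\to0$ forces $h(x^\infty,y^\infty)=0$, and passing to the limit in $\mathcal K$ in \eqref{opt-x}--\eqref{opt-y}, using the closedness of the graphs of $\partial f,\partial g$, gives $-\mathcal{A}z^\infty\in\partial f(x^\infty)$, $-\mathcal{B}z^\infty\in\partial g(y^\infty)$; by \eqref{optimal-cond}, $(x^\infty,y^\infty,z^\infty)$ is a primal--dual solution pair of \eqref{prob}.

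Finally, to upgrade subsequential convergence to full convergence I would re-run Lemma~\ref{lemma31-C2} with $(x^*,y^*,z^*)$ replaced by $(x^\infty,y^\infty,z^\infty)$ --- legitimate because the lemma's derivation uses only \eqref{optimal-cond}. Along $\mathcal K$ one has $x_e^k,y_e^k,z_e^k\to0$ and globally $\Delta y^k,\Delta z^k\to0$, whence $\|\mathcal{H}w^k\|_{\mathcal{W}_k}\to0$ on $\mathcal K$; given $\varepsilon>0$ pick $k_0\in\mathcal K$ with $\|\mathcal{H}w^{k_0}\|^2_{\mathcal{W}_{k_0}}$ small (and, in case (b), the tail $\sum_{l\ge k_0}R_l$ small), and the additive-error-free recursion of Lemma~\ref{lemma31-C2} propagates smallness to all $k\ge k_0$, giving $\|\mathcal{H}w^{k+1}\|^2_{\mathcal{W}_{k+1}}\le\varepsilon$. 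Hence $\|x^k-x^\infty\|_{\mathcal{P}_f+\Sigma_f}\to0$, $\|y^k-y^\infty\|_{\mathcal{T}_g}\to0$, $\|z^k-z^\infty\|\to0$; then $y^k\to y^\infty$ by $\mathcal{T}_g\succ0$, and from $\|x^{k+1}-x^\infty\|^2_{\sigma\mathcal{A}\mathcal{A}^*}\le2\|y^{k+1}-y^\infty\|^2_{\sigma\mathcal{B}\mathcal{B}^*}+2\sigma\|h(x^{k+1},y^{k+1})\|^2\to0$ together with $\|x^k-x^\infty\|_{\mathcal{P}_f+\Sigma_f}\to0$ one obtains $\|x^k-x^\infty\|_{\mathcal{T}_f}\to0$, so $x^k\to x^\infty$ by $\mathcal{T}_f\succ0$. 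I expect the subtle points to be precisely this last stage: verifying that re-centering Lemma~\ref{lemma31-C2} at $(x^\infty,y^\infty,z^\infty)$ is valid for every index and not merely along $\mathcal K$; handling the block of $\mathcal{W}_k$ whose weight $\nu_k$ tends to $0$; and completing the (possibly degenerate) $(\mathcal{P}_f+\Sigma_f)$-seminorm on the $x$-component up to the full $\mathcal{T}_f$-norm through the vanishing constraint residual $h(x^{k},y^{k})$.
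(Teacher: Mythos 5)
Your proposal is correct and follows exactly the route the paper intends: the paper omits the proof of Theorem~\ref{theorem-C2}, stating it is analogous to that of Theorem~\ref{theorem-C1}, and your argument is precisely that adaptation driven by Lemma~\ref{lemma31-C2} (telescoping the multiplicative recursion, extracting a KKT limit point, then re-centering the lemma at the limit to get full convergence). Your explicit handling of the two points where the analogy is not literal --- the degenerate $\nu_k$-weighted block of $\mathcal{W}_k$, which is harmless because it multiplies $\|\Delta y^k\|^2_{\mathcal{T}_g}\to0$ and is not needed for the boundedness or limit arguments, and the absence of the additive error terms $\varpi_k$ --- is exactly what is required.
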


 \begin{remark}
  Theorem \ref{theorem-C2} shows that one can establish the convergence of $\{(x^{k},y^{k},z^{k})\}$
  generated by the IEIDP-ADMM with (C2) if $\mathcal{P}_{\!f}$ and $\mathcal{P}_{\!g}$ are chosen such that
 \begin{align*}
  \mathcal{P}_{\!f}+\frac{3}{8}\Sigma_{f}\succeq 0,\ \mathcal{P}_{g}+\frac{3}{8}\Sigma_{g}\succeq 0,\
  \Sigma_{f}+\mathcal{P}_{\!f}+\sigma\mathcal{A}\mathcal{A}^* \succ 0,\ \Sigma_{g}+\mathcal{P}_{g}+\sigma\mathcal{B}\mathcal{B}^* \succ 0.
  \end{align*}
  In fact, using the same arguments, one can get the convergence of $\{(x^{k},y^{k},z^{k})\}$
  generated by the IEIDP-ADMM with (C2) if $\mathcal{P}_f$ and $\mathcal{P}_g$ are chosen such that
  for some $a_1,a_2\in [\frac{1}{2},1)$,
 \begin{align*}
   \mathcal{P}_{\!f}\!+\!\frac{1}{2}\Sigma_{f}\succeq 0,\ \mathcal{P}_{g}\!+\!\frac{1}{2}\Sigma_{g}\succeq 0,\
  a_1\Sigma_{f}\!+\!\mathcal{P}_{\!f}\!+\!\sigma\mathcal{A}\mathcal{A}^* \succ 0,\ a_2\Sigma_{g}\!+\!\mathcal{P}_{g}\!+\!\sigma\mathcal{B}\mathcal{B}^* \succ 0.
 \end{align*}
 \end{remark}

  \subsection{Convergence of the IEIDP-ADMM with (C2')}\label{subsec4.3}

  Let $w^{k}$ for $k\ge 1$ be same as the one of the last subsection. Define the block diagonal linear operator  $\mathcal{H}\!:\mathbb{X}\times\mathbb{Y}\times\mathbb{Z}\times\mathbb{Y}\times\mathbb{Y}
  \times\mathbb{Z}\to \mathbb{X}\times\mathbb{Y}\times\mathbb{Z}\times\mathbb{Y}\times\mathbb{Y}\times\mathbb{Z}$ by
  \[
    \mathcal{H}:={\rm Diag}\big((\mathcal{P}_{\!f}\!+\!\Sigma_{\!f})^{1/2},\ (\mathcal{T}_g)^{1/2},\
    (\tau\sigma)^{-1}\mathcal{I},\ (\mathcal{P}_{g}\!+\!{\textstyle\frac{3}{4}}\Sigma_g)^{1/2},
    \ (\mathcal{T}_g)^{1/2},\ (\tau^2\sigma)^{-1}\mathcal{I}\big)
  \]
  with the proximal operators $\mathcal{P}_{\!f}$ and $\mathcal{P}_{g}$ satisfying $\mathcal{P}_{\!f}+\Sigma_{\!f}\succeq 0$
  and $\mathcal{P}_{g}\!+\!\frac{3}{4}\Sigma_g\succeq 0$.
 \begin{lemma}\label{lemma31-C3}
  Let $\{(x^{k},y^{k},z^{k})\}_{k\ge 1}$ be the sequence generated by the IEIDP-ADMM with
  (C2') and $\max(\mu_k^2,\nu_{k}^2)\le\min(\frac{1}{8\gamma},\frac{2-\tau-2.5\gamma^{-1}}{3\gamma})$
  for some constant $\gamma\ge 360$. Suppose that Assumption \ref{assump} holds
  and the operators $\mathcal{P}_{\!f}$ and $\mathcal{P}_{g}$ also satisfy $\mathcal{P}_{\!f}+\frac{3}{8}\Sigma_{f} \succeq 0$
  and $\mathcal{P}_{g}+\frac{3}{8}\Sigma_{g}\succeq 0$. Then, when $\tau\in(0,2)$,
  the following inequality holds for all $k\ge 1$
  \begin{align*}
  \|\mathcal{H}w^{k+1}\|^2_{\mathcal{W}_{k+1}}
  & \le\max\Big(1+\frac{3\gamma\mu_{k}^2}{2\!-\!\tau\!-\!2.5\gamma^{-1}},1+2\gamma(\nu_k^2+3\mu_k^2)\Big)\|\mathcal{H}w^{k}\|^2_{\mathcal{W}_{k}}\\
    &\!\left\{\begin{array}{ll}
        \!-\frac{\gamma-10}{\gamma}\min(\tau,2.6\!-\!1.6\tau)\|h(x^{k},y^{k})\|^2\!-\!c_1\|\Delta y^{k+1}\|_{\mathcal{T}_g}^2 & {\rm if}\ \tau\in(0,1.6]\\
        +\frac{\gamma-4}{\gamma}|2.6\!-\!1.6\tau|\|h(x^{k},y^{k})\|^2\!+\!c_2\|\Delta y^{k+1}\|_{\mathcal{T}_g}^2 & {\rm if}\ \tau\in(1.6,2)
        \end{array}\right.
 \end{align*}
 where $c_1=2\!-\!\tau\!-\!6.5\gamma^{-1}-\frac{\max((1\!-\!\tau),(\tau\!-\!1)/1.6)}{1\!-\!6\gamma^{-1}}$,
  $c_2=\big|2\!-\!\tau\!-\!6.5\gamma^{-1}-\frac{\max((1\!-\!\tau),(\tau\!-\!1)/1.6)}{1\!-\!10\gamma^{-1}}\big|$,
  and  $\mathcal{W}_{k}\!:\mathbb{X}\times\mathbb{Y}\times\mathbb{Z}\times\mathbb{Y}\times\mathbb{Y}
  \times\mathbb{Z}\to \mathbb{X}\times\mathbb{Y}\times\mathbb{Z}\times\mathbb{Y}\times\mathbb{Y}\times\mathbb{Z}$
  is the block diagonal operator
  \[
   \mathcal{W}_k:={\rm Diag}\big((1\!-\!\gamma\mu^2_{k})\mathcal{I},\,(1\!-\!\gamma\nu^2_{k}-3\gamma\mu_{k}^2)\mathcal{I},\,
                   \mathcal{I},\,\mathcal{I},\,\gamma^{-1}\mathcal{I},\,(2\!-\!\tau\!-\!2.5\gamma^{-1}\!-\!1.5\gamma\mu_{k}^2)\mathcal{I}\big).
  \]
 \end{lemma}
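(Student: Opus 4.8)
The plan is to imitate closely the proofs of Lemmas \ref{lemma31-C1} and \ref{lemma31-C2}, now starting from the estimate \eqref{result-C3} of Proposition \ref{prop21}(c). The only genuinely new feature is that \eqref{result-C3}, unlike \eqref{result-C2}, already carries an extra term $4\gamma^{-1}\sigma\|h(x^{k},y^{k})\|^2$ on its right-hand side, which has to be merged with the contribution of the cross term $2(1\!-\!\tau)\sigma\langle h(x^{k},y^{k}),\mathcal{B}^*\Delta y^{k+1}\rangle$. First I would introduce an auxiliary block diagonal operator $\mathcal{V}_k$ on $\mathbb{X}\times\mathbb{Y}\times\mathbb{Z}\times\mathbb{Y}\times\mathbb{Y}\times\mathbb{Z}$ whose first four blocks are the identity, whose fifth block is $\gamma^{-1}\mathcal{I}$ (matching the coefficient of $\|\Delta y^{k}\|^2_{\mathcal{T}_g}$ on the left of \eqref{result-C3}), and whose sixth block is $\big(\frac{2-\tau-2.5\gamma^{-1}}{1-\alpha_k}+4\gamma^{-1}\big)\mathcal{I}$ for a suitable error sequence $\alpha_k$ of order $\gamma^{-1}+\nu_k^2+\nu_{k-1}^2$. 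Since $\|\Delta z^{k}\|^2=\tau^2\sigma^2\|h(x^{k},y^{k})\|^2$ by \eqref{zk}, the quantity $\|\mathcal{H}w^{k}\|^2_{\mathcal{V}_{k+1}}$ then collects exactly the coefficients of $\|x_e^{k}\|^2$, $\|y_e^{k}\|^2$, $\|z_e^{k}\|^2$, $\|\Delta y^{k}\|^2$ and $\sigma\|h(x^{k},y^{k})\|^2$ that appear on the left of \eqref{result-C3} together with the ones produced on the right when the cross term is bounded. The first target is the analogue of \eqref{aim-ineq1}--\eqref{aim-ineq2}: an inequality for $\|\mathcal{H}w^{k+1}\|^2_{\mathcal{W}_{k+1}}-\|\mathcal{H}w^{k}\|^2_{\mathcal{V}_{k+1}}$ whose right-hand side is a negative multiple of $\|h(x^{k},y^{k})\|^2$ plus a negative multiple of $\|\Delta y^{k+1}\|^2_{\mathcal{T}_g}$ for $\tau\in(0,1.6]$, and a small positive multiple of the same two quantities for $\tau\in(1.6,2)$.

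The core step is the treatment of the cross term, done exactly as in the earlier proofs by splitting on the sign of $1-\tau$ and invoking \eqref{Cauchy}: for $\tau\le 1$ one writes $2(1\!-\!\tau)\sigma\langle h(x^{k},y^{k}),\mathcal{B}^*\Delta y^{k+1}\rangle\le\frac{\sigma(1-\tau)}{1-\alpha_{k+1}}\|h(x^{k},y^{k})\|^2+(1-\alpha_{k+1})\|\Delta y^{k+1}\|^2_{(1-\tau)\sigma\mathcal{B}\mathcal{B}^*}$ and uses $\mathcal{T}_g-(1-\tau)\sigma\mathcal{B}\mathcal{B}^*\succeq\tau\mathcal{T}_g$, while for $\tau\in(1,2)$ one factors out a $\tau$ and uses $\mathcal{T}_g-(\tau-1)\tau\sigma\mathcal{B}\mathcal{B}^*\succeq(1+\tau-\tau^2)\mathcal{T}_g$. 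Combining these with the term $4\gamma^{-1}\sigma\|h(x^{k},y^{k})\|^2$ from \eqref{result-C3} and with the positive semidefiniteness of $\mathcal{P}_{\!f}+\frac12\Sigma_{\!f}-\gamma^{-1}(\mathcal{P}_{\!f}+\Sigma_{\!f})$, which holds because $\mathcal{P}_{\!f}+\frac38\Sigma_{\!f}\succeq0$ and $\gamma^{-1}$ is small, one is left with a residual coefficient of $\|h(x^{k},y^{k})\|^2$ equal to $-\frac{\gamma-10}{\gamma}\min(\tau,2.6-1.6\tau)$ for $\tau\le1.6$ and $+\frac{\gamma-4}{\gamma}|2.6-1.6\tau|$ for $\tau\in(1.6,2)$, and a residual coefficient of $\|\Delta y^{k+1}\|^2_{\mathcal{T}_g}$ equal to $-c_1$, respectively $+c_2$. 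The linear surrogate $2.6-1.6\tau$ replacing $1+\tau-\tau^2$, together with the several $\gamma^{-1}$-order pieces ($2.5\gamma^{-1}$, $6.5\gamma^{-1}$, and the $1-6\gamma^{-1}$, $1-10\gamma^{-1}$ in the denominators of $c_1,c_2$), is precisely what moves the break-point from $\frac{1+\sqrt{5}}{2}$ down to $1.6$ and what the hypothesis $\gamma\ge360$ is meant to control. At this point one has
\[
\|\mathcal{H}w^{k+1}\|^2_{\mathcal{W}_{k+1}}-\|\mathcal{H}w^{k}\|^2_{\mathcal{V}_{k+1}}\le
\begin{cases}
-\tfrac{\gamma-10}{\gamma}\min(\tau,2.6-1.6\tau)\|h(x^{k},y^{k})\|^2-c_1\|\Delta y^{k+1}\|^2_{\mathcal{T}_g}, & \tau\in(0,1.6],\\
+\tfrac{\gamma-4}{\gamma}|2.6-1.6\tau|\|h(x^{k},y^{k})\|^2+c_2\|\Delta y^{k+1}\|^2_{\mathcal{T}_g}, & \tau\in(1.6,2).
\end{cases}
\]

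To conclude I would argue exactly as for \eqref{HGk} and \eqref{HGk-C2}. The hypothesis $\max(\mu_k^2,\nu_k^2)\le\min\!\big(\frac{1}{8\gamma},\frac{2-\tau-2.5\gamma^{-1}}{3\gamma}\big)$ guarantees $\mathcal{W}_k\succ0$ and $\mathcal{V}_k\succ0$ and makes each of the six block ratios of $\mathcal{V}_{k+1}\mathcal{W}_k^{-1}$ elementary to estimate; for instance, from $\gamma\mu_k^2,\gamma\nu_k^2\le\frac18$ one gets $\frac{1}{1-\gamma\nu_k^2-3\gamma\mu_k^2}\le1+2\gamma(\nu_k^2+3\mu_k^2)$ for the second block, and, using in addition $1.5\gamma\mu_k^2\le\frac12(2-\tau-2.5\gamma^{-1})$ and $\alpha_{k+1}\le\frac12$, a bound $1+\frac{3\gamma\mu_k^2}{2-\tau-2.5\gamma^{-1}}$ for the last block, whence $\mathcal{V}_{k+1}\mathcal{W}_k^{-1}\preceq\max\!\big(1+\frac{3\gamma\mu_k^2}{2-\tau-2.5\gamma^{-1}},\,1+2\gamma(\nu_k^2+3\mu_k^2)\big)\mathcal{I}$. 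Then $\|\mathcal{H}w^{k}\|^2_{\mathcal{V}_{k+1}}=\langle\mathcal{H}w^{k},\mathcal{V}_{k+1}\mathcal{W}_k^{-1}\mathcal{W}_k\mathcal{H}w^{k}\rangle$ is bounded by that factor times $\|\mathcal{H}w^{k}\|^2_{\mathcal{W}_k}$, and inserting this into the one-step inequality above yields the asserted recursion.

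The step I expect to be the main obstacle is the bookkeeping inside the cross-term estimate: tracking simultaneously the $\gamma^{-1}$-order errors coming from \eqref{result-C3} itself (the $2.5\gamma^{-1}$ on its left and the $4\gamma^{-1}\sigma\|h(x^{k},y^{k})\|^2$ on its right), from the Cauchy bound of the cross term, and from replacing $1+\tau-\tau^2$ by the linear surrogate $2.6-1.6\tau$, and then calibrating $\alpha_{k+1}$ and the extra $4\gamma^{-1}$ in the sixth block of $\mathcal{V}_k$ so that the residual coefficients land exactly on $-\frac{\gamma-10}{\gamma}\min(\tau,2.6-1.6\tau)$ and $-c_1$ (and their positive counterparts), with $c_1,c_2$ as stated. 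Everything else is a faithful repetition of the arguments already carried out for criteria (C1) and (C2).
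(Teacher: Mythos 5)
Your route is the paper's route: start from \eqref{result-C3}, bound the cross term $2(1-\tau)\sigma\langle h(x^k,y^k),\mathcal{B}^*\Delta y^{k+1}\rangle$ via \eqref{Cauchy} with a case split on the sign of $1-\tau$, convert $\sigma\|h(x^{k+1},y^{k+1})\|^2$ and $\sigma\|h(x^k,y^k)\|^2$ into $\|\Delta z^{k+1}\|^2$ and $\|\Delta z^{k}\|^2$ through \eqref{zk}, package the result as a one-step inequality for $\|\mathcal{H}w^{k+1}\|^2_{\mathcal{W}_{k+1}}-\|\mathcal{H}w^{k}\|^2_{\mathcal{V}}$ (the paper's \eqref{aim-ineq3}), and finish by bounding $\mathcal{V}\mathcal{W}_k^{-1}$. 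Your choice of $\gamma^{-1}\mathcal{I}$ for the fifth block of the auxiliary operator is the right one (it must match the coefficient $\gamma^{-1}$ of $\|\Delta y^{k}\|^2_{\mathcal{T}_g}$ on the left of \eqref{result-C3}).

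Two of your bookkeeping choices, however, would not land on the stated constants, and they are exactly the calibration you flag as the main obstacle. First, for $\tau\in(1,2)$ you propose to apply \eqref{Cauchy} with the weight $\tau(\tau-1)\sigma\mathcal{B}\mathcal{B}^*$ (producing the pair $\frac{\tau-1}{\tau}\|h(x^k,y^k)\|^2$ and $(1+\tau-\tau^2)\|\Delta y^{k+1}\|^2_{\mathcal{T}_g}$) and only afterwards pass to the linear surrogate $2.6-1.6\tau$. That after-the-fact substitution fails on the $\|h\|^2$ side: for $\tau\in(1,1.6)$ one has $\frac{\tau-1}{\tau}>\frac{\tau-1}{1.6}$, so replacing the denominator $\tau$ by $1.6$ strengthens, rather than weakens, the inequality. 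The paper applies \eqref{Cauchy} with the weight $1.6(\tau-1)\sigma\mathcal{B}\mathcal{B}^*$ from the outset, which simultaneously yields the coefficient $\frac{\tau-1}{1.6}$ and $\mathcal{T}_g-1.6(\tau-1)\sigma\mathcal{B}\mathcal{B}^*\succeq(2.6-1.6\tau)\mathcal{T}_g$; this is where $c_1$, $c_2$ and the breakpoint $\tau=1.6$ actually come from. Second, loading both the factor $\frac{1}{1-\alpha_{k}}$ and the extra $4\gamma^{-1}$ from the right-hand side of \eqref{result-C3} into the sixth block of a $k$-dependent $\mathcal{V}_k$ makes the sixth block ratio of $\mathcal{V}_{k+1}\mathcal{W}_k^{-1}$ exceed $1+\frac{3\gamma\mu_k^2}{2-\tau-2.5\gamma^{-1}}$, so the stated multiplicative factor $\max\big(1+\frac{3\gamma\mu_{k}^2}{2-\tau-2.5\gamma^{-1}},1+2\gamma(\nu_k^2+3\mu_k^2)\big)$ is lost. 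The paper instead keeps $\mathcal{V}$ constant with sixth block $(2-\tau-2.5\gamma^{-1})\mathcal{I}$ and absorbs the $4\gamma^{-1}$ together with the denominator $1-6\gamma^{-1}-\gamma(\nu_k^2+\nu_{k+1}^2)$ into the additive coefficient $c_k(\gamma,\tau)=2-\tau-6.5\gamma^{-1}-\frac{\max(1-\tau,(\tau-1)/1.6)}{1-6\gamma^{-1}-\gamma(\nu_k^2+\nu_{k+1}^2)}$ of $\|h(x^k,y^k)\|^2$, which is then estimated using $\gamma\nu_k^2\le\frac18$ and $\gamma\ge360$ to produce $c_1$, $c_2$, $\frac{\gamma-10}{\gamma}$ and $\frac{\gamma-4}{\gamma}$. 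With these two adjustments your argument becomes the paper's proof.
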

 \begin{proof}
  Let $\mathcal{V}\!:\mathbb{X}\times\mathbb{Y}\times\mathbb{Z}\times\mathbb{Y}\times\mathbb{Y}
  \times\mathbb{Z}\to \mathbb{X}\times\mathbb{Y}\times\mathbb{Z}\times\mathbb{Y}\times\mathbb{Y}\times\mathbb{Z}$
  be the block diagonal linear operator defined by
  \(
    \mathcal{V}:={\rm Diag}\big(\mathcal{I},\,\mathcal{I},\,
                   \mathcal{I},\,\mathcal{I},\,\mathcal{I},\,(2\!-\!\tau\!-\!2.5\gamma^{-1})\mathcal{I}\big).
  \)
  With the notations $\mathcal{W}_k$ and $\mathcal{V}$, we first establish the following inequality
  \begin{align}\label{aim-ineq3}
    \|\mathcal{H}w^{k+1}\|^2_{\mathcal{W}_{k+1}}\!-\!\|\mathcal{H}w^{k}\|^2_{\mathcal{V}}
    &\le -\!\min(\tau,2.6\!-\!1.6\tau)\big[1\!-\!6\gamma^{-1}\!-\!\gamma(\nu^2_{k}+\nu^2_{k+1})\big]\|\Delta y^{k+1}\|_{\mathcal{T}_g}\nonumber\\
     &\quad\ -\!c_k(\gamma,\tau)\| h(x^{k},y^{k})\|^2,
  \end{align}
  where
  \[
    c_k(\gamma,\tau):=2\!-\!\tau\!-\!\frac{6.5}{\gamma}-\frac{\max(1\!-\!\tau,(\tau\!-\!1)/1.6)}{1\!-\!6\gamma^{-1}\!-\!\gamma(\nu^2_{k}+\nu^2_{k+1})}.
  \]
  Indeed, when $\tau\in (0,1]$, from $1\!-6\gamma^{-1}\!-\gamma(\nu^2_{k}+\nu^2_{k+1})>0$
  and equation \eqref{Cauchy} it follows that
  \begin{align*}
   2\sigma(1\!-\!\tau)\big|\langle h(x^{k},y^{k}), \mathcal{B}^*\Delta y^{k+1}\rangle\big|
  &\le \frac{\sigma(1\!-\!\tau)}{1\!-\!6\gamma^{-1}\!-\!\gamma(\nu^2_{k}+\nu^2_{k+1})}\| h(x^{k},y^{k})\|^2 \nonumber\\
  &\quad +\![1\!-\!6\gamma^{-1}\!-\!\gamma(\nu^2_{k}+\nu^2_{k+1})]\|\Delta y^{k+1}\|^2_{(1-\tau)\sigma\mathcal{B}\mathcal{B}^*}.
  \end{align*}
   Substituting the last inequality into Proposition \ref{prop21}(c) then yields that
   \begin{align}\label{C3-case1}
    &\|\mathcal{H}w^{k+1}\|^2_{\mathcal{W}_{k+1}}\!-\!\|\mathcal{H}w^{k}\|^2_{\mathcal{V}}
     +c_k(\gamma,\tau)\sigma\|h(x^{k},y^{k})\|^2\nonumber\\
    &\le -\|\Delta x^{k+1}\|_{\mathcal{P}_{\!f}
      +\frac{1}{2}\Sigma_{\!f}-\frac{1}{\gamma}(\mathcal{P}_{\!f}+\Sigma_{\!f})}^2
     \!-\!\big[1\!-\!6\gamma^{-1}\!-\!\gamma(\nu^2_{k}+\nu^2_{k+1})\big]
     \|\Delta y^{k+1}\|_{\mathcal{T}_g-(1-\tau)\sigma\mathcal{B}\mathcal{B}^*}^2 \nonumber\\
    &\le -\tau\big(1\!-\!6\gamma^{-1}\!-\!\gamma\nu^2_{k}-\gamma\nu^2_{k+1}\big)\|\Delta y^{k+1}\|_{\mathcal{T}_g},
   \end{align}
  where the last inequality is using
  $\mathcal{P}_{\!f}+\frac{1}{2}\Sigma_{\!f}-\gamma^{-1}(\mathcal{P}_{\!f}+\Sigma_{\!f})\succeq 0$
  and $\mathcal{T}_g-(1-\tau)\sigma\mathcal{B}\mathcal{B}^*\succeq \tau\mathcal{T}_g$.
  When $\tau\in(1,2)$, from $1\!-6\gamma^{-1}\!-\gamma(\nu^2_{k}+\nu^2_{k+1})>0$
  and equation \eqref{Cauchy} it follows that
  \begin{align*}
   2\big|(1\!-\!\tau)\sigma\langle h(x^{k},y^{k}), \mathcal{B}^*(y^{k+1}\!-\!y^{k})\rangle\big|
   &\le \frac{\sigma(\tau\!-\!1)}{1.6(1\!-\!6\gamma^{-1}\!-\!\gamma\nu^2_{k}\!-\!\gamma\nu^2_{k+1})}\| h(x^{k},y^{k})\|^2\\
   &\quad + \big(1\!-\!6\gamma^{-1}\!-\!\gamma\nu^2_{k}\!-\!\gamma\nu^2_{k+1}\big)\|\Delta y^{k+1}\|^2_{1.6(\tau\!-\!1)\sigma\mathcal{B}\mathcal{B}^*}.\nonumber
  \end{align*}
  Substituting it into Proposition \ref{prop21}(c) and
  using the notations $\mathcal{W}_k$ and $\mathcal{V}$, we have
   \begin{align}\label{C3-case2}
    &\|\mathcal{H}w^{k+1}\|^2_{\mathcal{W}_{k+1}}\!-\!\|\mathcal{H}w^{k}\|^2_{\mathcal{V}}
     +c_k(\gamma,\tau)\sigma\|h(x^{k},y^{k})\|^2\nonumber\\
    &\le -\|\Delta x^{k+1}\|_{\mathcal{P}_{\!f}
      +\frac{1}{2}\Sigma_{\!f}-\frac{1}{\gamma}(\mathcal{P}_{\!f}+\Sigma_{\!f})}^2
     \!-\!\big[1\!-\!6\gamma^{-1}\!-\!\gamma(\nu^2_{k}+\nu^2_{k+1})\big]
     \|\Delta y^{k+1}\|_{\mathcal{T}_g-1.6(\tau-1)\sigma\mathcal{B}\mathcal{B}^*}^2 \nonumber\\
    &\le -(2.6-1.6\tau)\big(1\!-\!6\gamma^{-1}\!-\!\gamma\nu^2_{k}\!-\!\gamma\nu^2_{k+1}\big)\|\Delta y^{k+1}\|_{\mathcal{T}_g}^2,
   \end{align}
   where the last inequality is using
  $\mathcal{P}_{\!f}+\frac{1}{2}\Sigma_{\!f}-\gamma^{-1}(\mathcal{P}_{\!f}+\Sigma_{\!f})\succeq 0$
  and $\mathcal{T}_g-1.6(\tau\!-\!1)\sigma\mathcal{B}\mathcal{B}^*\succeq (1-\!1.6(\tau\!-1))\mathcal{T}_g$.
   From \eqref{C3-case1} and \eqref{C3-case2}, we immediately obtain inequality \eqref{aim-ineq3}.

  \medskip

  Now by the given condition $\max(\mu_k^2,\nu_{k}^2)\le\min(\frac{1}{8},\frac{2-\tau-2.5\gamma^{-1}}{3})\frac{1}{\gamma}$,
  we can check that
  \[
    \frac{2\!-\!\tau\!-\!2.5\gamma^{-1}}{2\!-\!\tau\!-\!2.5\gamma^{-1}\!-\!1.5\gamma\mu_{k}^2}
    \le 1+\frac{3\gamma\mu_{k}^2}{2\!-\!\tau\!-\!2.5\gamma^{-1}}\ {\rm and}\
     \frac{1}{1-\gamma\nu_k^2-3\gamma\mu_k^2}\le 1+2\gamma(\nu_k^2+3\mu_k^2).
  \]
  Together with the expressions of $\mathcal{V}$ and $\mathcal{W}_k$, it is not difficult to verify that
  \[
    \mathcal{V}\mathcal{W}_{k}^{-1} \preceq \max\Big(1+\frac{3\gamma\mu_{k}^2}{2\!-\!\tau\!-\!2.5\gamma^{-1}},1+2\gamma(\nu_k^2+3\mu_k^2)\Big)\mathcal{I}.
  \]
  Combining this relation with \eqref{aim-ineq3} and the condition
  $\max(\mu_k^2,\nu_{k}^2)\le\min(\frac{1}{8},\frac{2-\tau-2.5\gamma^{-1}}{3})\frac{1}{\gamma}$,
  we obtain the desired result. The proof is completed.
  \end{proof}

  \medskip

 By Lemma \ref{lemma31-C3} we can establish the following convergence result of the IEIDP-ADMM with
 the criterion (C2'). Since the proof is similar to that of Theorem \ref{theorem-C1}, we omit it.
 \begin{theorem}\label{theorem-C3}
  Let $\{(x^{k},y^{k},z^{k})\}_{k\ge 1}$ be the sequence generated by the IEIDP-ADMM with
  the criterion (C2') and $\max(\mu_k^2,\nu_{k}^2)\le\min(\frac{1}{8\gamma},\frac{2-\tau-2.5\gamma^{-1}}{3\gamma})$
  for some constant $\gamma\ge 360$. Suppose that Assumption \ref{assump} holds
  and $\mathcal{P}_{\!f}$ and $\mathcal{P}_{\!g}$ also satisfy $\mathcal{P}_{\!f}+\frac{3}{8}\Sigma_{f}\succeq 0$
  and $\mathcal{P}_{g}+\frac{3}{8}\Sigma_{g}\!\succeq 0$.
  Then, for (a) $\tau\in \big(0,1.6\big]$ or (b) $\tau\in(1.6,2)$ but
  $\sum_{k=0}^{\infty}\big(|2.6\!-1.6\tau|\|h(x^{k},y^{k})\|^2\!+\!c_2\|\Delta y^{k+1}\|_{\mathcal{T}_g}^2\big)<\infty$,
  the sequence $\{(x^{k},y^{k})\}$ converges to an optimal solution of \eqref{prob}
  and the sequence $\{z^{k}\}$ converges to an optimal solution to the dual problem of \eqref{prob}.
 \end{theorem}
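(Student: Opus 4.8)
The plan is to reuse the machinery from the proof of Theorem~\ref{theorem-C1}, with Lemma~\ref{lemma31-C3} playing the role of Lemma~\ref{lemma31-C1}. Write $\vartheta_k:=\max\!\big(1+\tfrac{3\gamma\mu_{k}^2}{2-\tau-2.5\gamma^{-1}},\,1+2\gamma(\nu_k^2+3\mu_k^2)\big)$ for $k\ge1$. Since criterion (C2') enforces $\sum_{k}\max(\mu_k^2,\nu_k^2)<\infty$ and $\vartheta_k-1=O(\mu_k^2+\nu_k^2)$, the infinite product $\prod_{k\ge1}\vartheta_k$ converges to some $K_1\in[1,+\infty)$; note that, unlike the (C1) case, Lemma~\ref{lemma31-C3} carries \emph{no} additive error term, because the relative criterion (C2') is self-scaled.

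First I would iterate Lemma~\ref{lemma31-C3}. In case (a), $\tau\in(0,1.6]$, it reads $\|\mathcal{H}w^{k+1}\|^2_{\mathcal{W}_{k+1}}\le\vartheta_k\|\mathcal{H}w^{k}\|^2_{\mathcal{W}_{k}}-b_k$ with $b_k:=\tfrac{\gamma-10}{\gamma}\min(\tau,2.6-1.6\tau)\|h(x^k,y^k)\|^2+c_1\|\Delta y^{k+1}\|^2_{\mathcal{T}_g}\ge0$; telescoping and discarding the factors $\prod_{j=l+1}^{k}\vartheta_j\ge1$ in front of the nonnegative summands gives $\|\mathcal{H}w^{k+1}\|^2_{\mathcal{W}_{k+1}}+\sum_{l=1}^{k}b_l\le K_1\|\mathcal{H}w^{1}\|^2_{\mathcal{W}_{1}}$. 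In case (b), $\tau\in(1.6,2)$, the lemma instead produces a nonnegative excess $s_k:=\tfrac{\gamma-4}{\gamma}|2.6-1.6\tau|\|h(x^k,y^k)\|^2+c_2\|\Delta y^{k+1}\|^2_{\mathcal{T}_g}$ on the right, and the imposed hypothesis $\sum_k s_k<\infty$ both bounds $\|\mathcal{H}w^{k+1}\|^2_{\mathcal{W}_{k+1}}$ (by $K_1(\|\mathcal{H}w^1\|^2_{\mathcal{W}_1}+\sum_l s_l)$) and supplies the summability of the $h$- and $\Delta y$-terms. A routine check shows that $\max(\mu_k^2,\nu_k^2)\le\min(\tfrac{1}{8\gamma},\tfrac{2-\tau-2.5\gamma^{-1}}{3\gamma})$ forces $\mathcal{W}_k\succeq\overline{\mathcal{W}}$ for a fixed $\overline{\mathcal{W}}\succ0$, and that $\gamma\ge360$ keeps $c_1$ and $\tfrac{\gamma-10}{\gamma}\min(\tau,2.6-1.6\tau)$ strictly positive on all of $(0,1.6]$. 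Hence $\{\mathcal{H}w^k\}$ is bounded and $\sum_k\big(\|h(x^k,y^k)\|^2+\|\Delta y^{k+1}\|^2_{\mathcal{T}_g}\big)<\infty$, so $\|h(x^k,y^k)\|\to0$, $\|\Delta y^{k+1}\|_{\mathcal{T}_g}\to0$, and by Step~(S.3), $\|\Delta z^{k+1}\|=\tau\sigma\|h(x^{k+1},y^{k+1})\|\to0$.

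From here the argument transcribes that of Theorem~\ref{theorem-C1}. Boundedness of $\{\mathcal{H}w^k\}$ bounds $\{\|x_e^k\|_{\mathcal{P}_f+\Sigma_f}\}$, $\{\|y_e^k\|_{\mathcal{T}_g}\}$, $\{\|z_e^k\|\}$; using $\|x_e^{k}\|^2_{\mathcal{A}\mathcal{A}^*}\le2\|y_e^{k}\|^2_{\mathcal{B}\mathcal{B}^*}+2\|h(x^{k},y^{k})\|^2$ and $\sigma\mathcal{B}\mathcal{B}^*\preceq\mathcal{T}_g$ bounds $\{\|x_e^k\|_{\mathcal{T}_f}\}$, and $\mathcal{T}_f,\mathcal{T}_g\succ0$ give boundedness of $\{(x^k,y^k,z^k)\}$. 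Taking a subsequence $\{(x^k,y^k,z^k)\}_{\mathcal{K}}\to(x^\infty,y^\infty,z^\infty)$, one has $h(x^\infty,y^\infty)=0$ from $h(x^k,y^k)\to0$; passing to the limit along $\mathcal{K}$ in \eqref{opt-x}--\eqref{opt-y} — here $\|\xi^{k+1}\|_{\mathcal{F}}\le\mu_{k+1}\|\Delta x^{k+1}\|_{\mathcal{T}_f}\to0$ since $\mu_{k+1}\to0$ and $\{\Delta x^{k+1}\}$ is bounded, $\|\eta^{k+1}\|_{\mathcal{G}}\to0$ likewise, $\Delta y^{k+1}\to0$, and $h(x^{k+1},y^k)=h(x^{k+1},y^{k+1})-\mathcal{B}^*\Delta y^{k+1}\to0$ — and invoking closedness of the graphs of $\partial f$ and $\partial g$ yields $-\mathcal{A}z^\infty\in\partial f(x^\infty)$, $-\mathcal{B}z^\infty\in\partial g(y^\infty)$, so by \eqref{optimal-cond} $(x^\infty,y^\infty)$ solves \eqref{prob} with associated (dual-optimal) multiplier $z^\infty$. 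Replacing $(x^*,y^*,z^*)$ by $(x^\infty,y^\infty,z^\infty)$ leaves \eqref{aim-ineq3} and Lemma~\ref{lemma31-C3} valid; since $\|\mathcal{H}w^k\|_{\mathcal{W}_k}\to0$ along $\mathcal{K}$, choosing $k_0\in\mathcal{K}$ with $\|\mathcal{H}w^{k_0}\|_{\mathcal{W}_{k_0}}$ and the relevant tails small and re-iterating the lemma from $k_0$ forces $\|\mathcal{H}w^{k}\|^2_{\overline{\mathcal{W}}}\to0$; thus $\|x^k-x^\infty\|_{\mathcal{P}_f+\Sigma_f}\to0$, $\|y^k-y^\infty\|_{\mathcal{T}_g}\to0$, $z^k\to z^\infty$, and combining these with $\mathcal{T}_g\succ0$, with $\|x^k-x^\infty\|^2_{\mathcal{A}\mathcal{A}^*}\le2\|y^k-y^\infty\|^2_{\mathcal{B}\mathcal{B}^*}+2\|h(x^k,y^k)\|^2\to0$, and with $\mathcal{T}_f\succ0$, gives $x^k\to x^\infty$, $y^k\to y^\infty$, $z^k\to z^\infty$, as claimed.

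I expect the only real obstacle to be the bookkeeping that makes Lemma~\ref{lemma31-C3} usable: one must verify that the single restriction $\gamma\ge360$, together with the prescribed smallness of $\max(\mu_k^2,\nu_k^2)$, simultaneously keeps $\mathcal{W}_k$ uniformly positive definite, keeps the coefficients of the sign-definite terms strictly positive across the whole range $\tau\in(0,1.6]$, and keeps $\prod_k\vartheta_k$ finite; and one must accept that on $(1.6,2)$ the cross term $2(1-\tau)\sigma\langle h(x^k,y^k),\mathcal{B}^*\Delta y^{k+1}\rangle$ can no longer be absorbed, which is precisely why the extra summability hypothesis is imposed there. Everything else is a transcription of the proof of Theorem~\ref{theorem-C1}.
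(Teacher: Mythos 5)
Your proposal is correct and follows exactly the route the paper intends: the paper explicitly omits this proof on the grounds that it is "similar to that of Theorem \ref{theorem-C1}", and your argument is precisely that transcription, with Lemma \ref{lemma31-C3} (no additive error term, $\vartheta_k-1=O(\mu_k^2+\nu_k^2)$ so $\prod_k\vartheta_k<\infty$ under (C2')) replacing Lemma \ref{lemma31-C1}, and with the sign change of the $h$- and $\Delta y$-terms at $\tau=1.6$ handled by the extra summability hypothesis in case (b), exactly as in case (b) of Theorem \ref{theorem-C1}.
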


  To close this section, we want to point out that the convergence of the inexact positive
  definite proximal ADMM \cite{NWY11} with (C1) and a special (C2') is only established
  for $\tau=1$, while the convergence results of Theorem \ref{theorem-C1} and Theorem \ref{theorem-C3}
  extend it to the inexact indefinite proximal ADMM with $\tau\in(0,\frac{\sqrt{5}+1}{2})$
  and $\tau\in(0,1.6]$, respectively.

 \section{Applications to doubly nonnegative SDPs}\label{sec5}

  Let $\mathcal{S}_{+}^n$ be the cone of $n\times n$ positive semidefinite matrices in
  the vector space $\mathbb{S}^n$ of $n\times\! n$ real symmetric matrices, endowed with
  the Frobenius inner product $\langle\cdot,\cdot\rangle$ and its induced norm $\|\cdot\|$.
  The doubly nonnegative SDP problem is described as follows:
  \begin{align}\label{PDNNSDP}
   \max\Big\{\!-\!\big\langle C,X\big\rangle\ |\ \mathcal{A}_EX=b_E,\ \mathcal{A}_{I}X\ge b_I,\ X\in\mathcal{S}_{+}^n,\ X\!-\!M\in\mathcal{K}\Big\}
  \end{align}
  where $\mathcal{A}_E\!:\mathbb{S}^n\to\mathbb{R}^{m_{E}}$ and $\mathcal{A}_I\!:\mathbb{S}^n\to\mathbb{R}^{m_{I}}$
  are the linear operators, $b_E\!\in\!\mathbb{R}^{m_{E}}$ and $b_I\!\in\!\mathbb{R}^{m_{I}}$ are
  the given vectors, and $X\!-M\in\mathcal{K}$ means that every entry of $X\!-M$ is nonnegative.
  We always assume that $\mathcal{A}_E$ is surjective. The dual of \eqref{PDNNSDP} has the form
  \begin{align}\label{DDNNSDP}
   &\min \big(\delta_{\mathbb{R}_{+}^{m_I}}(y_I)\!-\!\langle b_I,y_I\rangle\big)
     +\big(\delta_{\mathcal{K}^*}(Z)\!-\!\langle M,Z\rangle\big)-\langle b_E,y_E\rangle+\delta_{\mathcal{S}_{+}^n}(S)\nonumber\\
   &\ {\rm s.t.}\ \ \mathcal{A}_{I}^*y_I+Z+\mathcal{A}_E^*y_E+S=C
  \end{align}
  where $\mathcal{K}^*$ is the positive dual cone of $\mathcal{K}$. For the four-block
  separable convex minimization problem \eqref{DDNNSDP}, one may use the multi-block ADMM
  with Gaussian back substitution \cite{HTY12,HY13}
  or the proximal ADMM \cite{STY14} to solve. In this section, we apply the IEIDP-ADMMs for
  \eqref{DDNNSDP} by viewing $(y_I,Z)$ as a block and $(y_E,S)$ as a block (respectively,
  viewing $(Z,y_E)$ as a block and $S$ as a block when $m_I=0$). Notice that, by introducing
  a slack variable, problem \eqref{PDNNSDP} can be equivalently written as
  \begin{align}\label{Equi-PDNNSDP}
   \max\Big\{\!-\!\big\langle C,X\big\rangle\ |\ \mathcal{A}_EX=b_E,\ \mathcal{A}_{I}X-x=b_I,\ X\in\mathcal{S}_{+}^n,\ X\!-\!M\in\mathcal{K},\ x\ge 0\Big\},
  \end{align}
  and an elementary calculation yields the dual problem of \eqref{Equi-PDNNSDP} as follows
  \begin{align}\label{Equi-DDNNSDP}
   &\min -\langle b_I,y_I\rangle+\delta_{\mathbb{R}_{+}^{m_I}}(z)+\big(\delta_{\mathcal{K}^*}(Z)\!-\!\langle M,Z\rangle\big)
     -\langle b_E,y_E\rangle+\delta_{\mathcal{S}_{+}^n}(S)\nonumber\\
   &\ {\rm s.t.}\ \ \mathcal{A}_{I}^*y_I+Z+\mathcal{A}_E^*y_E+S=C,\ \ y_I-z = 0.
  \end{align}
  Problem \eqref{Equi-DDNNSDP} is still a four-block separable convex minimization
  since $(z,Z)$ can be solved simultaneously. Hence, in this section we also apply
  the IEIDP-ADMMs for solving \eqref{Equi-DDNNSDP} by viewing $(y_I,z,Z)$ as a block
  and $(y_E,S)$ as a block.

  \medskip

  Throughout this section, instead of using the constraint qualification (CQ) in Assumption \ref{assump},
  we use the following more familiar Slater's CQ for problem \eqref{DDNNSDP}:
  \begin{assumption}\label{assumpB}
  (a) For problem \eqref{PDNNSDP}, there exists a point $\widehat{X}\in\mathbb{S}^n$ such that
      \[
        \mathcal{A}_E\widehat{X}=b_E,\ \mathcal{A}_I\widehat{X}\ge b_I,\ \widehat{X}\in{\rm int}\,(\mathcal{S}_{+}^n),\ \widehat{X}\in\mathcal{K}.
      \]

  \noindent
  (b) For problem \eqref{DDNNSDP}, there exists a point $(\widehat{y}_I,\widehat{Z},\widehat{y}_E,\widehat{S})
  \in\!\mathbb{R}^{m_I}\times\mathbb{S}^n\times\mathbb{R}^{m_E}\times\mathbb{S}^n$ such that
       \[
        \mathcal{A}_{I}^*\widehat{y}_I+\widehat{Z}+\mathcal{A}_E^*\widehat{y}_E+\widehat{S}=c,\ \widehat{Z}\in\mathcal{K}^*,\
        \widehat{y}_I\in\mathbb{R}_{+}^{m_I},\ \widehat{S}\in{\rm int}\,(\mathcal{S}_{+}^n).
      \]
 \end{assumption}
  By \cite[Corollary 5.3.6]{BL06}, under Assumption \ref{assumpB}, the strong duality for \eqref{PDNNSDP}
  and \eqref{DDNNSDP} holds, and the following Karush-Kuhn-Tucker (KKT) condition has nonempty solutions:
  \begin{equation}\label{KKT-cond}
    \left\{\begin{array}{l}
     \mathcal{A}_{E}X-b_E=0,\\
     \mathcal{A}_{I}^*y_I+Z+\mathcal{A}_{E}^*y_E+S-C=0,\\
     \langle X,S\rangle=0,\ X\in\mathcal{S}_{+}^n,\ S\in\mathcal{S}_{+}^n,\\
     \langle X,Z\rangle =0,\ X\in\mathcal{K},\ Z\in\mathcal{K}^*,\\
     \langle y_I,\mathcal{A}_{I}X\!-\!b_I\rangle=0,\ \mathcal{A}_{I}X\!-\!b_I\ge 0,\ y_I\in\mathbb{R}_{+}^{m_I}.
     \end{array}\right.
  \end{equation}

 \subsection{Numerical results for the DNNSDPs without $\mathcal{A}_{I}X\ge b_I$}\label{subsec5.1}

 In this case since the linear operator $[\mathcal{I}\ \ \mathcal{A}_{E}^*]^*[\mathcal{I}\ \ \mathcal{A}_{E}^*]$
 is not positive definite, we impose a semi-proximal term
 $\frac{1}{2}(Z-\!Z^k,y_E-\!y_E^k){\rm Diag}(\varepsilon\mathcal{I},0)(Z-\!Z^k,y_E-\!y_E^k)^T$ to guarantee that
 \begin{equation}\label{Tf-three}
  \mathcal{T}_{\!f}\succeq\sigma[\mathcal{I}\ \ \mathcal{A}_{E}^*]^*[\mathcal{I}\ \ \mathcal{A}_{E}^*]+{\rm Diag}(\varepsilon\mathcal{I},0)
           =\sigma\!\left[\begin{matrix}
                        \frac{\sigma+\varepsilon}{\sigma}\mathcal{I} &\mathcal{A}_{E}^*\\
                        \mathcal{A}_{E} & \mathcal{A}_{E}\mathcal{A}_{E}^*
                  \end{matrix}\right]\succ 0,
 \end{equation}
 and propose the following partial IEIDP-ADMM for problem (\ref{DDNNSDP}) with three blocks,
 where for a given $\sigma>0$, the augmented Lagrangian function of \eqref{DDNNSDP} is defined as
  \begin{align}
     L_{\sigma}(y_I,Z,y_E,S,X)&:=(\delta_{\mathbb{R}_{+}^{m_I}}(y_I)-\langle b_I,y_I\rangle)+(\delta_{\mathcal{K}^*}(Z)-\langle M,Z\rangle)
     -\langle b_E,y_E\rangle+\delta_{\mathcal{S}_{+}^n}(S)\nonumber\\
     &\quad\ +\!\langle X,\mathcal{A}_{I}^*y_I\!+\!Z\!+\!\mathcal{A}_{E}^*y_E\!+\!S\!-C\rangle +\frac{\sigma}{2}\big\|\mathcal{A}_{I}^*y_I\!+\!Z\!+\!\mathcal{A}_{E}^*y_E\!+\!S\!-C\big\|^2\nonumber\\
    &\qquad \forall(y_I,Z,y_E,S,X)\in\mathbb{R}^{m_I}\times\mathbb{S}^n\times\mathbb{R}^{m_E}\times\mathbb{S}^n\times\mathbb{S}^n.\nonumber
  \end{align}

 \bigskip

 \setlength{\fboxrule}{0.8pt}
 \noindent
 \fbox{
 \parbox{0.96\textwidth}
 {
 \begin{algorithm} \label{Alg1}({\bf A partial IEIDP-ADMM for (\ref{DDNNSDP}) with three blocks})
 \begin{description}
   \item[(S.0)] Let $\mathcal{T}\!=\varrho\mathcal{I}-\mathcal{A}_{I}\mathcal{A}_{I}^*$
                for $\varrho>\lambda_{\rm max}(\mathcal{A}_{I}\mathcal{A}_{I}^*)$.
                Let $\sigma,\tau>0$ be given. Choose a  \hspace*{0.05cm} small constant
                $\varepsilon>0$ and a point $(Z^0,y_{E}^0,S^0,X^0)=(0,0,0,0)$.
                Set $k:=0$.

   \item[(S.1)] Compute the following problems by one of the criteria (C1) and (C2):
                \begin{align}\label{3subprob}
                 &(Z^{k+1},y_E^{k+1})\approx\mathop{\arg\min}_{Z,y_E}\phi_k(Z,y_E)\!:=L_{\sigma}(0,Z,y_E,S^{k},X^{k})+\frac{1}{2}\|Z\!-\!Z^k\|_{\varepsilon\mathcal{I}}^2;\\
                 &S^{k+1}=\mathop{\arg\min}_{S}L_{\sigma}(0,Z^{k+1},y_E^{k+1},S,X^{k})= \Pi_{\mathcal{S}^n_{+}}(C\!-\!\mathcal{A}_{E}^*y_E^{k+1}\!-\!Z^{k+1}\!-\!\sigma^{-1}X^{k}).\nonumber
                \end{align}

   \item[(S.3)]  Update the Lagrange multiplier $X^{k+1}$ via the following formula
                \[
                  X^{k+1} = X^{k}+\tau\sigma( Z^{k+1}+\mathcal{A}_{E}^*y_E^{k+1}+S^{k+1} -C);
                \]
  \item[(S.4)]  Let $k\leftarrow k+1$, and go to Step (S.1).

  \end{description}
  \end{algorithm}}
  }

 \bigskip

 For the approximate optimal solution $(Z^{k+1},y_E^{k+1})$ of subproblem \eqref{3subprob},
 one may get it by solving the problem $\min_{Z,y_E}\phi_k(Z,y_E)$ in an alternating way.
 Let $k_0=k$. The iterates $(Z^{k_j},y_{_E}^{k_j})$ yielded by solving the problem $\min_{Z,y_E}\phi_k(Z,y_E)$
 alternately satisfy
 \[
   Z^{k_j}=\mathop{\arg\min}_{Z\in\mathbb{S}^n}\phi_k(Z,y_{\!_E}^{k_{j-1}})\ \ {\rm and}\ \
   y_{\!_E}^{k_j}=\mathop{\arg\min}_{y_E\in\mathbb{R}^{m_{\!E}}}\phi_k(Z^{k_j},y_{\!_E})\quad{\rm for}\ j=1,2,\ldots.
 \]
 From the expression of the function $\phi_k(\cdot,\cdot)$, it is immediate to obtain that
 \[
  \left\{\begin{array}{l}
   0\in\mathcal{N}_{\mathcal{K}^*}(Z^{k_j})-M+X^k +\sigma(Z^{k_j}\!+\!\mathcal{A}_{E}^*y_{\!_E}^{k_{j-1}}\!+\!S^{k}\!-\!C)+\varepsilon(Z^{k_j}\!-\!Z^{k}),\\
    0= \mathcal{A}_{E}X^{k}-b_E +\sigma\mathcal{A}_{E}(\mathcal{A}_{E}^*y_{\!_E}^{k_j}\!+\!Z^{k_j}\!+\!S^{k}\!-\!C).
   \end{array}\right.
 \]
  Comparing this system with the optimality condition of $\min_{Z,y_E} \phi_k(Z,y_E)$,
  with $\xi^{k_j}=\sigma\mathcal{A}_{E}^*(y_{\!_E}^{k_j}\!-\!y_{\!_E}^{k_{j-1}})$ we have
 $(\xi^{k_j},0)\in\partial\phi_k(Z^{k_j},y_{\!_E}^{k_j})$. This means that $(Z^{k_j},y_{\!_E}^{k_j})$
 satisfies the criterion (C1) with $\nu_k\equiv 0$ when $\|\xi^{k_j}\|\le \mu_{k+1}$
 and $\sum_{k=0}^{\infty}\mu_{k+1}<\infty$. In addition, let
 \[
   \delta:=(\sqrt{\sigma+\varepsilon}-\!\sqrt{\sigma})\min\Big(\sqrt{\sigma\!+\!\varepsilon},
  \frac{\sigma}{\sqrt{\sigma\!+\!\varepsilon}}\lambda_{\rm min}(\mathcal{A}_{E}\mathcal{A}_{E}^*)\Big).
 \]
 By using equation \eqref{Tf-three} and \cite[Theorem 7.7.6]{HJ86}, it is not difficult to verify that
 \[
  \mathcal{T}_{\!f}\succeq
  \sigma\left[\begin{matrix}
                        \frac{\sqrt{\sigma+\varepsilon}-\sqrt{\sigma}}{\sigma}\sqrt{\sigma\!+\!\varepsilon}\mathcal{I} &0\\
                        0& \frac{\sqrt{\sigma\!+\!\varepsilon}-\sqrt{\sigma}}{\sqrt{\sigma+\varepsilon}}\mathcal{A}_{E}\mathcal{A}_{E}^*
                  \end{matrix}\right]\succeq\delta \mathcal{I}.
  \]
  This means that $(Z^{k_j},y_{\!_E}^{k_j})$ satisfies the criterion (C2) with $\mathcal{F}=\delta^{-1}\mathcal{I}$
  and $\nu_k\equiv 0$ once
  \[
   \|\xi^{k_j}\|
   \le \sqrt{\delta\sigma}\mu_{k+1}\sqrt{\|Z^{k_j}\!-\!Z^k+\mathcal{A}_{E}^*(y_{\!_E}^{k_j}\!-\!y_{\!_E}^k)\|^2
            +\frac{\varepsilon}{\sigma}\|Z^{k_j}\!-\!Z^k\|^2}\ {\rm and}\ {\textstyle\sum_{k=0}^{\infty}}\mu_{k+1}<\infty,
  \]
  since the right hand side of the first inequality is less than
  $\sqrt{\delta}\mu_{k+1}\|(Z^{k_j}\!-\!Z^k;y_{\!_E}^{k_j}\!-\!y_{\!_E}^k)\|_{\mathcal{T}_{\!f}}$.
  In the sequel, we call Algorithm \ref{Alg1} with the subproblems in \eqref{3subprob} solved alternately
  by the criteria (C1) and (C2) IEIDP-ADMM1 and IEIDP-ADMM2, respectively.

  \medskip

  We apply IEIDP-ADMM1 and IEIDP-ADMM2 for the doubly nonnegative SDPs without
  inequality constraint $\mathcal{A}_{I}X\ge b_I$, and compare their performance with
  that of the $3$-block ADMM of step-size $\tau=1.618$ (for short, ADMM3d).
  Among others, the doubly nonnegative SDP test examples can be found in \cite{STY14}.
  We have implemented IEIDP-ADMM1, IEIDP-ADMM2 and ADMM3d in MATLAB,
  where $\varepsilon=10^{-5}$ and $\mu_{k}=\min(0.1,\frac{1}{k^{1.001}})$ for $k\ge 1$
  are used for IEIDP-ADMM1 and IEIDP-ADMM2. Notice that when
  $\|(Z^{k_j}\!-\!Z^k;y_{\!_E}^{k_j}\!-\!y_{\!_E}^k)\|_{\mathcal{T}_{\!f}}<1$, the criterion (C2) is
  more restrictive than (C1). Moreover, the criterion (C2) will require much
  more inner iterations as the primal and dual infeasibility becomes smaller since
  $\|(Z^{k_j}\!-\!Z^k;y_{\!_E}^{k_j}\!-\!y_{\!_E}^k)\|_{\mathcal{T}_{\!f}}$ is close to $0$.
  So, in the implementation of IEIDP-ADMM2, we modify the criterion (C2) into
  \begin{equation}
    \|\xi^{k_j}\|\le \max\big(\sqrt{\delta}\mu_{k+1}\|(Z^{k_j}\!-\!Z^k;y_{\!_E}^{k_j}\!-\!y_{\!_E}^k)\|_{\mathcal{T}_{\!f}},
    0.1\max(\eta_{P},\eta_{D})\big),
  \end{equation}
  where $\eta_{P}$ and $\eta_{D}$ are defined below. In addition, the implementation of ADMM3d
  here is different from that of \cite{WGY10} since the former uses
  the solution order $Z\rightarrow y_E\rightarrow S$, while the latter uses the order
  $y_E\rightarrow Z\rightarrow S$. The computational results for all DNNSDPs
  are obtained on a Windows system with Intel(R) Core(TM) i3-2120 CPU@3.30GHz.

  \medskip

  We measure the accuracy of an approximate optimal solution $(Z,y_E,S,X)$ for \eqref{PDNNSDP}
  and \eqref{DDNNSDP} by using the relative residual
  \(
    \eta=\max\big\{\eta_{P},\eta_{D},\eta_{\mathcal{S}},\eta_{\mathcal{K}},\eta_{\mathcal{S}^*},\eta_{\mathcal{K}^*},\eta_{C_1},\eta_{C_2}\big\}
  \)
  where
  \begin{align*}
   \eta_{P}\!=\!\frac{\|\mathcal{A}_{E}X\!-\!b_E\|}{1+\|b_E\|},\ \eta_D\!=\!\frac{\|\mathcal{A}_{E}^*y_E\!+\!S\!+\!Z\!-\!C\|}{1+\|C\|},\
   \eta_{\mathcal{S}}\!=\!\frac{\|\Pi_{\mathcal{S}_{+}^n}(-X)\|}{1+\|X\|},\ \eta_{\mathcal{K}}=\frac{\|\Pi_{\mathcal{K}^*}(-X)\|}{1+\|X\|},\\
   \eta_{\mathcal{S}^*}\!=\!\frac{\|\Pi_{\mathcal{S}_{+}^n}(-S)\|}{1+\|S\|},\
   \eta_{\mathcal{K}^*}\!=\!\frac{\|\Pi_{\mathcal{K}^*}(-Z)\|}{1+\|Z\|},\
   \eta_{C_1}\!=\!\frac{\langle X,S\rangle}{1+\|X\|+\|S\|},\
   \eta_{C_2}\!=\!\frac{\langle X,Z\rangle}{1+\|X\|+\|Z\|}.
  \end{align*}
  We terminated the three solvers IEIDP-ADMM1, IEIDP-ADMM2 and ADMM3d whenever $\eta<10^{-6}$
  or the number of iteration is over $k_{\rm max}=20000$.

  \medskip

  In the implementation of the three solvers, the penalty parameter $\sigma$ is dynamically adjusted
  according to the progress of the algorithms, and the idea to adjust $\sigma$ is to balance
  the progress of primal feasibilities $(\eta_P,\eta_{\mathcal{S}},\eta_{\mathcal{K}})$
  and dual feasibilities $(\eta_D,\eta_{\mathcal{S}^*},\eta_{\mathcal{K}^*})$. The exact details
  on the adjustment strategies are not given here. In addition,
  all the solvers also adopt some kind of restart strategies to ameliorate slow convergence.
  During the testing, we use the same adjustment strategy of $\sigma$ and restart strategy
  for all the solvers.

   \medskip

  Figure \ref{3block-fig} shows the performance profiles of IEIDP-ADMM1, IEIDP-ADMM2
  and ADMM3d in terms of number of iterations and computing time, respectively,
  for the total $605$ (including ${\rm BIQ}$(165), ${\rm RCP}$(120), $\theta_{+}$(113),
  ${\rm FAP}$(13) and ${\rm QAP}$(95)) tested problems.
  We recall that a point $(x,y)$ is in the performance profiles curve of a method
  if and only if it can solve $(100y)\%$ of all tested problems no slower than $x$ times of
  any other methods. We see that IEIDP-ADMM1 and ADMM3d need the comparable iterations
  and computing time. Among others, IEIDP-ADMM2 requires the least number of iterations for
  $60\%$ test problems, but it needs the most computing time which is about $1.5$ times that of
  IEIDP-ADMM1 and ADMM3d for about $80\%$ test problems.

  \begin{figure}[htbp]
 \begin{minipage}{0.5\linewidth}
 \centering
 \includegraphics[width=3.1in]{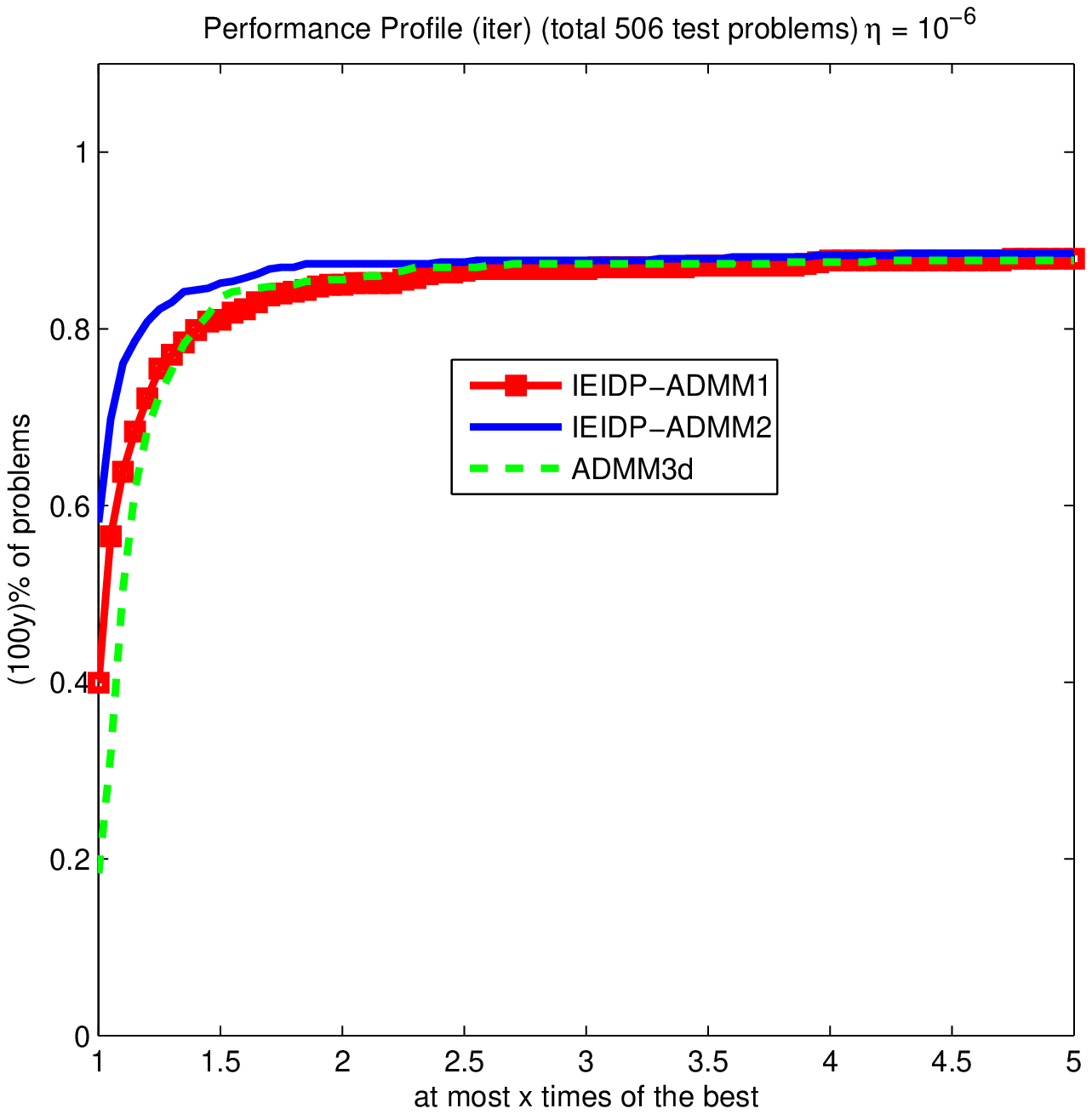}
 \end{minipage}%
 \begin{minipage}{0.5\linewidth}
 \centering
 \includegraphics[width=3.1in]{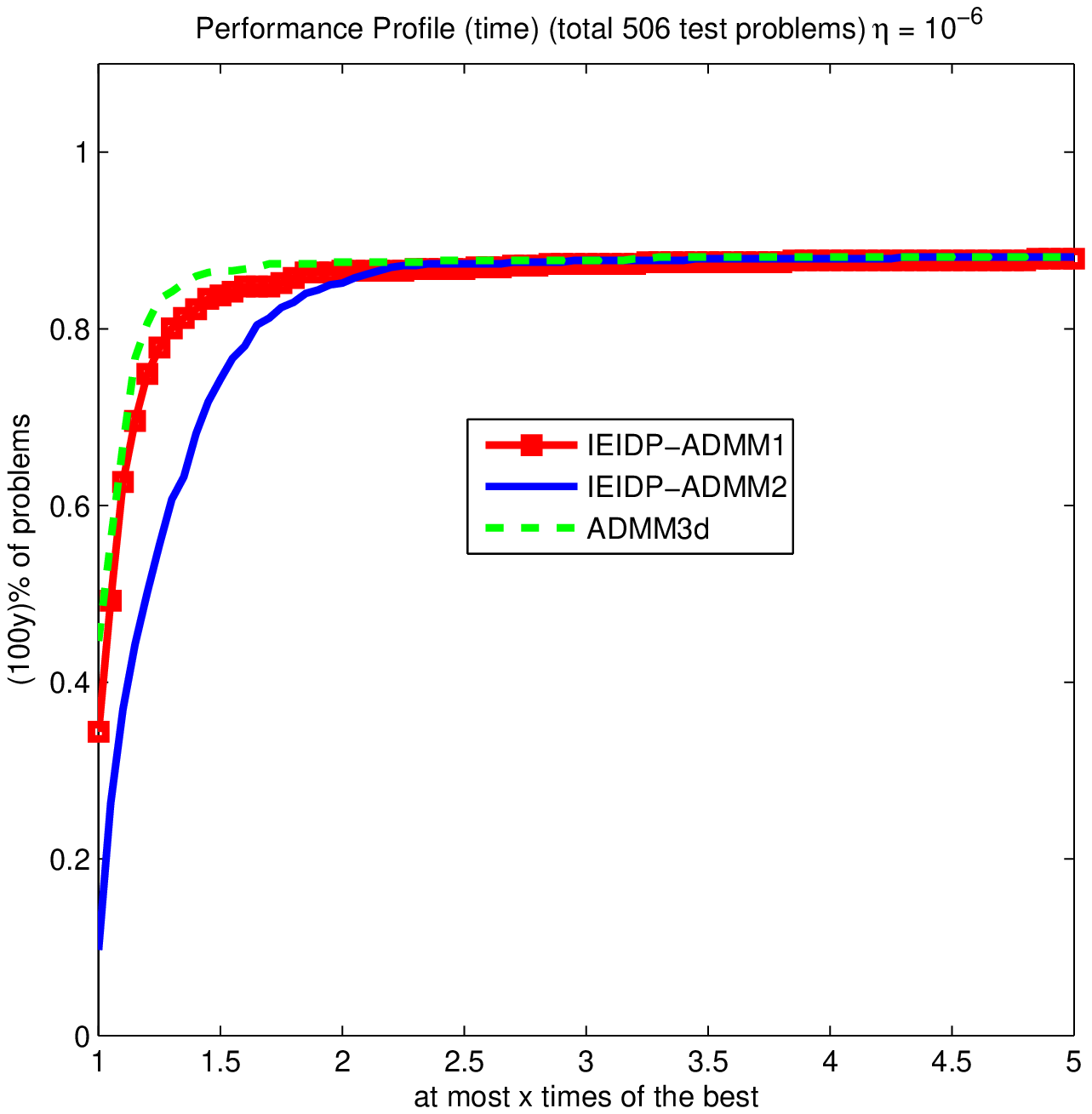}
 \end{minipage}
  \caption{\small Performance profiles of the number of iterations and computing time of solvers}
 \label{3block-fig}
 \end{figure}

 \subsection{Numerical results for the DNNSDPs with $\mathcal{A}_{I}X\ge b_I$}\label{subsec5.2}

  For this case, we may apply the proposed IEIDP-ADMMs for solving \eqref{DDNNSDP} or \eqref{Equi-DDNNSDP}.
  Firstly, we report the numerical results of the IEIDP-ADMMs for solving problem \eqref{DDNNSDP}.

 \subsubsection{Numerical results of the IEIDP-ADMMs for problem \eqref{DDNNSDP}}\label{subsubsec5.2.1}

 Since $[\mathcal{I}\ \ \mathcal{A}_{I}^*]^*[\mathcal{I}\ \ \mathcal{A}_{I}^*]$ and
 $[\mathcal{A}_{E}^*\ \ \mathcal{I}]^*[\mathcal{A}_{E}^*\ \ \mathcal{I}]$ are not positive definite
 and $\mathcal{A}_I$ is not surjective, we introduce the semi-proximal terms
 $\frac{1}{2}(y_I-\!y_I^k,Z-\!Z^k){\rm Diag}(\sigma\mathcal{T},0)(y_I-y_I^k,Z-Z^k)^T$ and
 $\frac{1}{2}(y_E-\!y_E^k,S-S^k){\rm Diag}(0,\varepsilon\mathcal{I})(y_E-y_E^k,S-\!S^k)^T$ to guarantee that
 \begin{equation}\label{Tf-four}
   \mathcal{T}_{\!f}\succeq \sigma[\mathcal{A}_{I}^*\ \ \mathcal{I}]^*[\mathcal{A}_{I}^*\ \ \mathcal{I}]
   +{\rm Diag}(\sigma\mathcal{T},0)
   =\sigma\!\left[\begin{matrix}
                        \varrho\mathcal{I} &\mathcal{A}_{I}\\
                        \mathcal{A}_{I}^* & \mathcal{I}
                  \end{matrix}\right]\succ 0
 \end{equation}
  and
  \begin{equation}\label{Tg-four}
  \mathcal{T}_{\!g}\succeq \sigma[\mathcal{A}_{E}^*\ \ \mathcal{I}]^*[\mathcal{A}_{E}^*\ \ \mathcal{I}]
  +{\rm Diag}(0,\varepsilon\mathcal{I})
  =\sigma\!\left[\begin{matrix}
                        \mathcal{A}_{E}\mathcal{A}_{E}^* &\mathcal{A}_{E}\\
                        \mathcal{A}_{E}^* & \frac{\sigma+\varepsilon}{\sigma}\mathcal{I}
                  \end{matrix}\right]\succ 0,
  \end{equation}
  and propose the following partial inexact indefinite proximal ADMMs for solving (\ref{DDNNSDP}).

 \bigskip

 \setlength{\fboxrule}{0.8pt}
 \noindent
 \fbox{
 \parbox{0.96\textwidth}
 {
 \begin{algorithm} \label{Alg2}({\bf An inexact indefinite-proximal ADMM for (\ref{DDNNSDP})})
 \begin{description}
   \item[(S.0)] Let $\mathcal{T}\!=\varrho\mathcal{I}-\mathcal{A}_{I}\mathcal{A}_{I}^*$
                for $\varrho>\lambda_{\rm max}(\mathcal{A}_{I}\mathcal{A}_{I}^*)$.
                Let $\sigma,\tau>0$ be given. Choose a small \hspace*{0.05cm} constant
                $\varepsilon>0$ and a point $(y_{I}^0,Z^0,y_{E}^0,S^0,X^0)=(0,0,0,0,0)$.
                Set $k:=0$.

   \item[(S.1)] Compute the following problems by one of the criteria (C1)-(C2):
               \begin{align}\label{4subprob}
                 &(y_I^{k+1},Z^{k+1})\approx\mathop{\arg\min}_{y_I,Z}\phi_k(y_I,Z)\!:=L_{\sigma}(y_I,Z,y_E^k,S^{k},X^{k})+\!\frac{1}{2}\|y_I\!-\!y_I^k\|_{\sigma\mathcal{T}}^2\\
                 &(y_E^{k+1},S^{k+1})\approx\mathop{\arg\min}_{y_E,S}\psi_k(y_E,S)\!:=\!L_{\sigma}(y_I^{k+1},Z^{k+1},y_E,S,X^{k})+\!\frac{1}{2}\|S\!-\!S^k\|_{\varepsilon\mathcal{I}}^2.\nonumber
                \end{align}

   \item[(S.3)]  Update the Lagrange multiplier $X^{k+1}$ via the formula
                \[
                  X^{k+1} = X^{k}+\tau\sigma(\mathcal{A}_{I}^*y_I^{k+1}+ Z^{k+1}+\mathcal{A}_{E}^*y_E^{k+1}+S^{k+1} -C).
                \]

  \item[(S.4)]  Let $k\leftarrow k+1$, and go to Step (S.1).

  \end{description}
  \end{algorithm}}
  }

  \bigskip

 One may obtain the approximate optimal solutions $(y_I^{k+1},Z^{k+1})$ and
 $(y_E^{k+1},S^{k+1})$ by computing $\min_{y_I,Z} \phi_k(y_I,Z)$ and $\min_{y_E,S} \psi_k(y_E,S)$ in an alternating way.
 Let $k_0\!=k$. The iterates $(y_{\!_I}^{k_j},Z^{k_j})$ for $j\ge 1$ yielded by minimizing $\phi_k(y_I,Z)$
 alternately satisfy
 \[
  \left\{\begin{array}{l}
    0\in\mathcal{N}_{\mathbb{R}_{+}^{m_I}}(y_{\!_I}^{k_j})\!-b_I+\mathcal{A}_{I}X^k+\sigma\mathcal{A}_{I}(\mathcal{A}_{I}^*y_{\!_I}^{k_j}+Z^{k_{j-1}}+\mathcal{A}_{E}^*y_E^k +\!S^{k}\!-\!C)
        +\sigma\mathcal{T}(y_{\!_I}^{k_j}\!-\!y_{\!_I}^k),\\
    0\in\mathcal{N}_{\mathcal{K}^*}(Z^{k_j})-M+X^k +\sigma(\mathcal{A}_{I}^*y_{\!_I}^{k_{j}}\!+\!Z^{k_j}\!+\!\mathcal{A}_{E}^*y_E^k\!+\!S^{k}\!-\!C).
   \end{array}\right.
 \]
 Let $\xi^{k_j}=\sigma\mathcal{A}_{I}(Z^{k_j}\!-\!Z^{k_{j-1}})$. Comparing the last system with
 the optimality condition of $\min_{y_I,Z} \phi_k(y_I,Z)$, we have $(\xi^{k_j},0)\in\partial\phi_k(y_{\!_I}^{k_j},Z^{k_j})$.
 This means that $(y_{\!_I}^{k_j},Z^{k_j})$ satisfies the criterion (C1) when $\|\xi^{k_j}\|\le \mu_{k+1}$
 and $\sum_{k=0}^{\infty}\mu_{k+1}<\infty$. Notice that
 \begin{align*}
   \mathcal{T}_{\!f}\succeq\left[\begin{matrix}
   \varrho\mathcal{I} &\mathcal{A}_{I}\\
   \mathcal{A}_{I}^* & \mathcal{I}
   \end{matrix}\right]
   &=\!\left[\begin{matrix}
   \mathcal{I} & 0\\
   -\varrho^{-1}\mathcal{A}_{I}^* & \mathcal{I}
   \end{matrix}\right]^{-1}
   \left[\begin{matrix}
   \varrho\mathcal{I} & 0\\
      0 & \mathcal{I}\!-\!\varrho^{-1}\mathcal{A}_I^*\mathcal{A}_I
   \end{matrix}\right]
   \left[\begin{matrix}
   \mathcal{I} & -\varrho^{-1}\mathcal{A}_I\\
   0 & \mathcal{I}
   \end{matrix}\right]^{-1}\\
   &\succeq\!\min(\varrho,\lambda_{\rm min}(\mathcal{I}\!-\!\varrho^{-1}\mathcal{A}_I^*\mathcal{A}_I))
   \left[\begin{matrix}
   \mathcal{I} & 0\\
   -\varrho^{-1}\mathcal{A}_{I}^* & \mathcal{I}
   \end{matrix}\right]^{-1}
   \left[\begin{matrix}
   \mathcal{I} & -\varrho^{-1}\mathcal{A}_I\\
   0 & \mathcal{I}
   \end{matrix}\right]^{-1}\\
 &\succeq\vartheta\lambda_{\rm min}\left(
   \left[\begin{matrix}
   \mathcal{I} & 0\\
   -\varrho^{-1}\mathcal{A}_{I}^* & \mathcal{I}
   \end{matrix}\right]^{-1}
    \left[\begin{matrix}
   \mathcal{I} & -\varrho^{-1}\mathcal{A}_I\\
   0 & \mathcal{I}
   \end{matrix}\right]^{-1}\right)\mathcal{I}\\
 &=\vartheta\lambda_{\rm max}\left(
      \left[\begin{matrix}
   \mathcal{I} & -\varrho^{-1}\mathcal{A}_I\\
   0 & \mathcal{I}
   \end{matrix}\right]
    \left[\begin{matrix}
   \mathcal{I} & 0\\
   -\varrho^{-1}\mathcal{A}_{I}^* & \mathcal{I}
   \end{matrix}\right]\right)^{-1}\mathcal{I}\succeq\vartheta\mathcal{I}
 \end{align*}
 where $\vartheta:=\min\big(\varrho,1\!-\!\varrho^{-1}\lambda_{\rm max}(\mathcal{A}_I^*\mathcal{A}_I)\big)$.
 So, $(y_{\!_I}^{k_j},Z^{k_j})$ satisfies (C2) with
 $\mathcal{F}=\frac{1}{\sigma\vartheta}\mathcal{I}$ when
  \[
   \|\xi^{k_j}\|\le\mu_{k+1}\sqrt{\vartheta}
     \sqrt{\|y_{\!_I}^{k_j}\!-\!y_I^k+\mathcal{A}_{I}^*(Z^{k_j}\!-\!Z^k)\|^2
           +\rho\|y_{\!_I}^{k_j}\!-\!y_I^k\|^2-\|\mathcal{A}_{I}^*(Z^{k_j}\!-\!Z^k)\|^2}.
  \]
  The iterates $(y_{\!_E}^{k_j},S^{k_j})$ for $j\!\ge 1$ yielded by minimizing $\psi_k(y_E,S)$ alternately satisfy
 \[
  \left\{\begin{array}{l}
   0=-b_E +\mathcal{A}_{E}X^k+\sigma\mathcal{A}_{E}(\mathcal{A}_{I}^*y_{I}^{k+1}\!+\!Z^{k+1}\!+\!\mathcal{A}_{E}^*y_{\!_E}^{k_j}\!+\!S^{k_{j-1}}\!-\!C),\\
    0\in\mathcal{N}_{\mathcal{S}_{+}^n}(S^{k_j})+X^k+\sigma(\mathcal{A}_{I}^*y_I^{k+1}+Z^{k+1}+\mathcal{A}_{E}^*y_{\!_E}^{k_j}+S^{k_j}\!-\!C) + \varepsilon(S^{k_j}-S^{k}).
   \end{array}\right.
 \]
  Let $\eta^{k_j}=\sigma\mathcal{A}_{E}(S^{k_j}\!-\!S^{k_{j-1}})$. Comparing the last system
  with the optimality condition of problem $\min_{y_E,S} \psi_k(y_E,S)$, we have
  $(\eta^{k_j},0)\in\partial\psi_k(y_{\!_E}^{k_j},S^{k_j})$. This means that $(y_{\!_E}^{k_j},S^{k_j})$ satisfies
  (C1) when $\|\eta^{k_j}\|\le \nu_{k+1}$ with $\sum_{k=0}^{\infty}\nu_{k+1}<\infty$.
  In addition, let
  \[
   \delta:=(\sqrt{\sigma+\varepsilon}-\!\sqrt{\sigma})\min\Big(
  \frac{\sigma}{\sqrt{\sigma\!+\!\varepsilon}}\lambda_{\rm min}(\mathcal{A}_{E}\mathcal{A}_{E}^*),\sqrt{\sigma\!+\!\varepsilon}\Big).
  \]
 By using equation \eqref{Tg-four} and \cite[Theorem 7.7.6]{HJ86}, it is not difficult to verify that
 \[
  \mathcal{T}_{\!g}\succeq
  \sigma\left[\begin{matrix}
                       \frac{\sqrt{\sigma\!+\!\varepsilon}-\sqrt{\sigma}}{\sqrt{\sigma+\varepsilon}}\mathcal{A}_{E}\mathcal{A}_{E}^* &0\\
                        0&  \frac{\sqrt{\sigma+\varepsilon}-\sqrt{\sigma}}{\sigma}\sqrt{\sigma\!+\!\varepsilon}\mathcal{I}
                  \end{matrix}\right]\succeq\delta \mathcal{I}.
  \]
  This means that $(y_{\!_E}^{k_j},S^{k_j})$ satisfies the criterion (C2) with $\mathcal{G}=\delta^{-1}\mathcal{I}$
  once
  \[
   \|\eta^{k_j}\|
   \le \sqrt{\delta\sigma}\nu_{k+1}\sqrt{\|\mathcal{A}_E^*(y_{\!_E}^{k_j}\!-\!y_E^k)+(S^{k_j}-S^k)\|^2
           +\frac{\varepsilon}{\sigma}\|S^{k_j}\!-\!S^k\|^2}.
  \]
  We call Algorithm \ref{Alg2} with the two subproblems in (S.1) solved alternately
  by the criteria (C1) and (C2) IEIDP-ADMM1 and IEIDP-ADMM2, respectively.

  \medskip

  We apply the IEIDP-ADMM1 and IEIDP-ADMM2 for solving the extended BIQ problems described
  in Section 4.2 of \cite{STY14}, and compare its performance with the four-block
  proximal ADMM of step-size $\tau=1.618$ (although without convergent guarantee)
  by adding a proximal term $\frac{\sigma}{2}\|y_I-y_I^k\|_{\mathcal{T}}^2$ for the $y_I$ part,
  where $\mathcal{T}=\|\mathcal{A}_I\mathcal{A}_I^*\|\mathcal{I}\!-\!\mathcal{A}_I\mathcal{A}_I^*$.
  We call this method PADMM4d. The computational results for all the extended BIQ problems are obtained on
  the same desktop computer as before.

  \medskip

  We measure the accuracy of an approximate optimal solution $(X,y_I,Z,y_E,S)$ for \eqref{PDNNSDP}
  and \eqref{DDNNSDP} by the relative residual
  \(
   \eta=\max\big\{\eta_{P},\eta_{D},\eta_{\mathcal{S}},\eta_{\mathcal{K}},\eta_{\mathcal{S}^*},\eta_{\mathcal{K}^*},\eta_{C_1},\eta_{C_2},\eta_I,\eta_{I^*}\big\},
  \)
  where $\eta_{P},\eta_{\mathcal{S}},\eta_{\mathcal{K}},\eta_{\mathcal{S}^*},\eta_{\mathcal{K}^*},\eta_{C_1},\eta_{C_2}$
  are defined as before, and $\eta_D,\eta_I,\eta_{I^*}$ are given by
  \begin{align*}
   \eta_D\!=\!\frac{\|\mathcal{A}_I^*y_I\!+Z+\mathcal{A}_E^*y_E+\!S-\!C\|}{1+\|C\|},\
   \eta_{I}\!=\!\frac{\|\max(0,b_I-\mathcal{A}_IX)\|}{1+\|b_I\|},\ \eta_{I^*}\!=\!\frac{\|\max(0,-y_I)\|}{1+\|y_I\|}.
  \end{align*}
  The three solvers IEIDP-ADMM1 and IEIDP-ADMM2 and PADMM4d were stopped
  whenever $\eta<10^{-6}$ or the number of iteration is over $k_{\rm max}=40000$.

   \begin{figure}[htbp]
   \begin{minipage}{0.5\linewidth}
   \centering
   \includegraphics[width=3.1in]{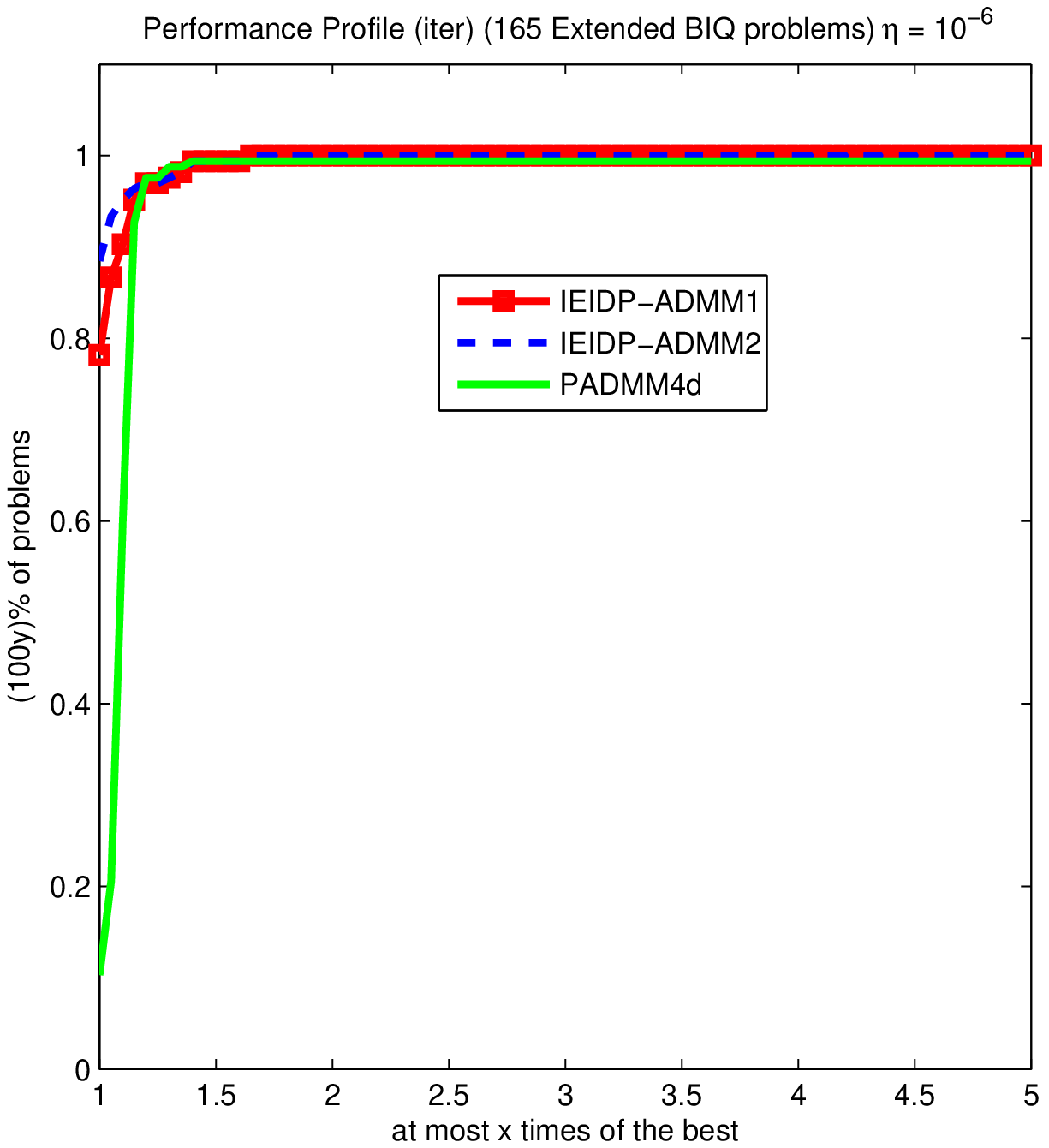}
   \end{minipage}%
   \begin{minipage}{0.5\linewidth}
   \centering
   \includegraphics[width=3.1in]{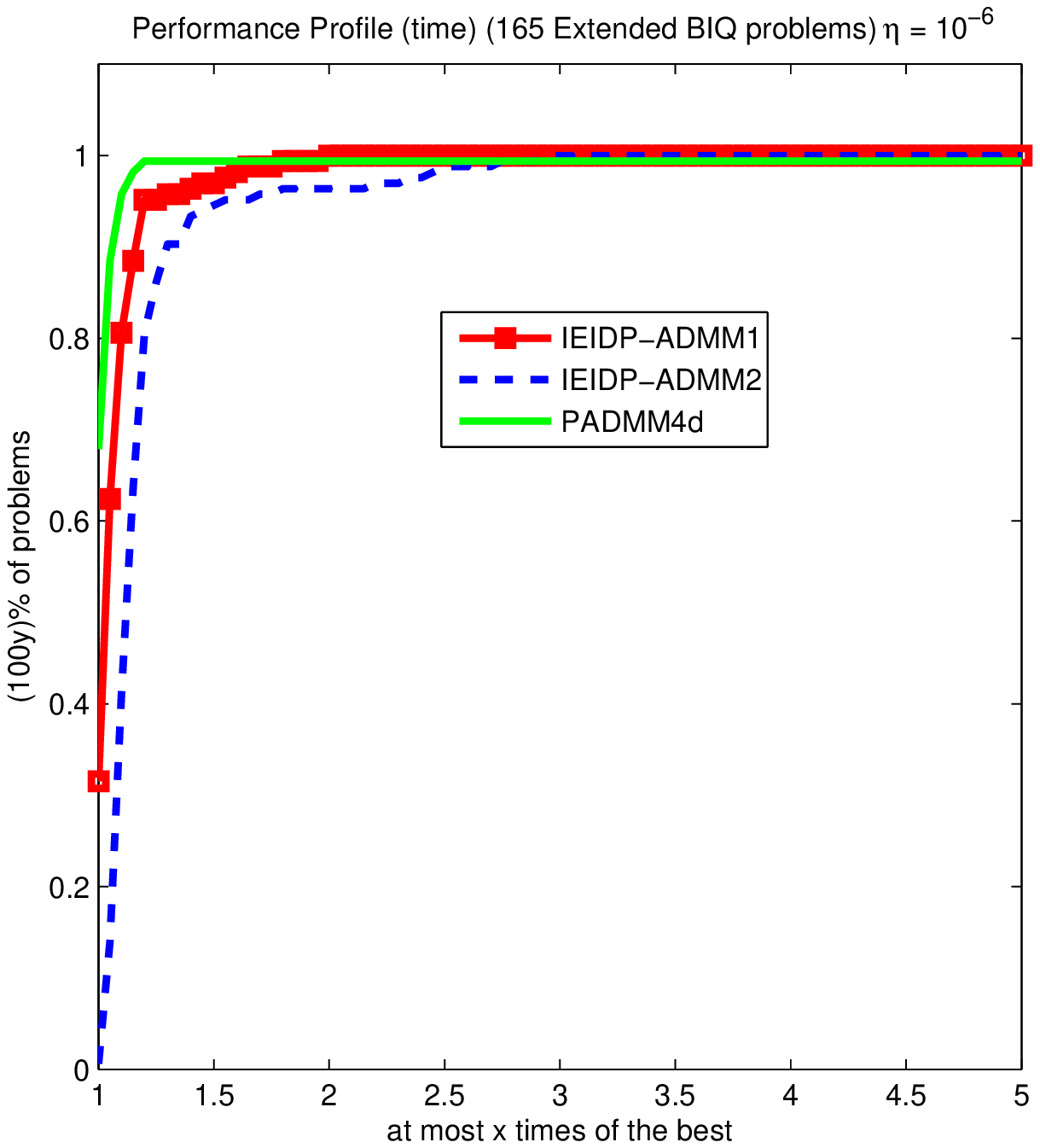}
   \end{minipage}
   \caption{\small Performance profiles of the number of iterations and computing time for EBIQ}
   \label{EBIQ-fig}
  \end{figure}

  \medskip

  Figure \ref{EBIQ-fig} plots the performance profiles of IEIDP-ADMM1, IEIDP-ADMM2 and PADMM4d
  in terms of the number of iterations and computing time, respectively, for the total $165$
  extended BIQ tested problems. It can be seen from this figure that IEIDP-ADMM1, IEIDP-ADMM2
  and PADMM4d are comparable in terms of the iterations and computing time, IEIDP-ADMM1 and
  IEIDP-ADMM2 need the least number of iterations for at least $80\%$ tested problems,
  which is about $90\%$ that of PADMM4d, and PADMM4d requires the least computing time for about
  $70\%$ tested problems, which is about $90\%$ that of IEIDP-ADMM2.

 \subsubsection{Numerical results of the IEIDP-ADMMs for problem \eqref{Equi-DDNNSDP}}\label{subsubsec5.2.2}

  Since
 $
  [\mathcal{A}_{I}^*\ \ 0\ \ \mathcal{I}]^*[\mathcal{A}_{I}^*\ \ 0\ \ \mathcal{I}]
  +[\mathcal{I}\ \ -\mathcal{I}\ \ 0]^*[\mathcal{I}\ \ -\mathcal{I}\ \ 0]
 $
 and
 $[\mathcal{A}_{E}^*\ \ \mathcal{I}]^*[\mathcal{A}_{E}^*\ \ \mathcal{I}]$ are not positive definite,
 we introduce the semi-proximal terms
 $\frac{1}{2}(y_I-\!y_I^k,z-z^k,Z-\!Z^k){\rm Diag}(\sigma\varepsilon\mathcal{I},0,0)(y_I-y_I^k,z-z^k,Z-Z^k)^T$
 and $\frac{1}{2}(y_E-\!y_E^k,S-S^k){\rm Diag}(0,\varepsilon\mathcal{I})(y_E-y_E^k,S-\!S^k)^T$ to ensure that
 \begin{equation}\label{Tf-four}
   \mathcal{T}_{\!f}
   \succeq \sigma\Big([\mathcal{A}_{I}^*\ \ 0\ \ \mathcal{I}]^*[\mathcal{A}_{I}^*\ \ 0\ \ \mathcal{I}]
  +[\mathcal{I}\ \ -\!\mathcal{I}\ \ 0]^*[\mathcal{I}\ \ -\!\mathcal{I}\ \ 0]\Big)
   +{\rm Diag}(\sigma\varepsilon\mathcal{I},0,0)\succ 0
 \end{equation}
  and
  \begin{equation}\label{Tg-four}
  \mathcal{T}_{\!g}\succeq \sigma[\mathcal{A}_{E}^*\ \ \mathcal{I}]^*[\mathcal{A}_{E}^*\ \ \mathcal{I}]
  +{\rm Diag}(0,\varepsilon\mathcal{I})
  =\sigma\!\left[\begin{matrix}
                        \mathcal{A}_{E}\mathcal{A}_{E}^* &\mathcal{A}_{E}\\
                        \mathcal{A}_{E}^* & \frac{\sigma+\varepsilon}{\sigma}\mathcal{I}
                  \end{matrix}\right]\succ 0,
  \end{equation}
  and propose the following inexact indefinite proximal ADMMs for solving (\ref{Equi-DDNNSDP}),
  where for a given $\sigma>0$, the augmented Lagrangian function of problem \eqref{Equi-DDNNSDP} is defined as:
  \begin{align}
     L_{\sigma}(y_I,z,Z,y_E,S,X,x)&:=-\langle b_I,y_I\rangle+\delta_{\mathbb{R}_{+}^{m_I}}(z)
     +(\delta_{\mathcal{K}^*}(Z)\!-\!\langle M,Z\rangle)-\langle b_E,y_E\rangle\nonumber\\
     &\quad\ +\delta_{\mathcal{S}_{+}^n}(S)+\!\langle X,\mathcal{A}_{I}^*y_I\!+\!Z\!+\!\mathcal{A}_{E}^*y_E\!+\!S\!-C\rangle
             +\langle x, y_I-z\rangle\nonumber\\
     &\quad\ +\frac{\sigma}{2}\big\|\mathcal{A}_{I}^*y_I\!+\!Z\!+\!\mathcal{A}_{E}^*y_E\!+\!S\!-C\big\|^2
             +\frac{\sigma}{2}\|y_I-z\|^2\\
    &\forall(y_I,z,Z,y_E,S,X,x)\in\mathbb{R}^{m_I}\times\mathbb{R}^{m_I}\times\mathbb{S}^n\times\mathbb{R}^{m_E}\times\mathbb{S}^n\times\mathbb{S}^n\times\mathbb{R}^{m_I}.\nonumber
   \end{align}.

   \bigskip

 \setlength{\fboxrule}{0.8pt}
 \noindent
 \fbox{
 \parbox{0.96\textwidth}
 {
 \begin{algorithm} \label{Alg3}({\bf An inexact indefinite proximal ADMM for (\ref{Equi-DDNNSDP})})
 \begin{description}
   \item[(S.0)] Let $\sigma,\tau>0$ be given. Choose a sufficiently small constant $\varepsilon>0$ and
                an initial \hspace*{0.05cm} point $(y_I^0,z^0,Z^0,y_E^0,S^0,X^0,x^0)=(0,0,0,0,0,0,0)$.
                Set $k:=0$.

   \item[(S.1)] Compute the following problems by one of the criteria (C1)-(C2):
                \begin{align*}\label{4subprob}
                 &(y_I^{k+1},(z^{k+1},Z^{k+1}))\approx\mathop{\arg\min}_{y_I,z,Z}L_{\sigma}(y_I,z,Z,y_E^k,S^{k},X^{k},x^k)
                                 +\frac{\sigma\varepsilon}{2}\|y_I\!-\!y_I^k\|^2,\\
                 &(y_E^{k+1},S^{k+1})\approx\mathop{\arg\min}_{y_E,S}
                 \!L_{\sigma}(y_I^{k+1},z^{k+1},Z^{k+1},y_E,S,X^{k},x^k)+\!\frac{\varepsilon}{2}\|S\!-\!S^k\|_{}^2.\
                \end{align*}

   \item[(S.3)]  Update the Lagrange multipliers $(X^{k+1},\zeta^{k+1})$ via the following formula
                \begin{align}
                  X^{k+1} &= X^{k}+\tau\sigma( Z^{k+1}+\mathcal{A}_{I}^*v^{k+1}+\mathcal{A}_{E}^*y^{k+1}+S^{k+1} -C),\nonumber\\
                     x^{k+1}&=x^k + \tau\sigma(y_I^{k+1}-z^{k+1}).
                \end{align}

  \item[(S.4)]  Let $k\leftarrow k+1$, and go to Step (S.1).

  \end{description}
  \end{algorithm}}
  }

 \bigskip

 For the approximate optimal solution $(y_I^{k+1},z^{k+1},Z^{k+1})$ in (S.1),
 one may get it by solving the problem $\min_{y_I,z,Z}\phi_k(y_I,z,Z)$ in an alternating way,
 where
 \[
   \phi_k(y_I,z,Z):=L_{\sigma}(y_I,z,Z,y_E^k,S^{k},X^{k},x^k)
                                 +\frac{\sigma\varepsilon}{2}\|y_I\!-\!y_I^k\|^2.
 \]
 The iterates $(y_{\!_I}^{k_j},z^{k_j},Z^{k_j})$ given by solving
 $\min_{y_I,z,Z}\phi_k(y_I,z,Z)$ alternately satisfy
 \begin{equation*}
   y_{\!_I}^{k_j}=\mathop{\arg\min}_{y\in\mathbb{R}^{m_I}}\phi_k(y_I,z^{k_{j-1}},Z^{k_{j-1}}),\
  (z^{k_j},Z^{k_j})=\!\mathop{\arg\min}_{(z,Z)\in\mathbb{R}^{m_I}\times\mathbb{S}^n}\!\phi_k(y_{\!_I}^{k_j},z,Z)\ \ {\rm for}\ j=1,2,\ldots
 \end{equation*}
 with $k_0=k$. We apply the conjugate gradient method to the first minimization, i.e.,
 \[
   \big(\mathcal{A}_I\mathcal{A}_I^*+(1+\varepsilon)\mathcal{I}\big)^{-1}y_I^{k_j}
   =\!\left[z^{k_{j-1}}\!+\!
     \mathcal{A}_I\Big(C\!-\!Z^{k_{j-1}}\!-\!S^k\!-\!\mathcal{A}_E^*y_E^k-\frac{X^k}{\sigma}\Big)
     -\frac{x^k\!-\!b_I}{\sigma}+\varepsilon y_I^k\right]+R^{k_j},
 \]
 where $R^{k_j}$ denotes the error yielded by the conjugate gradient method. Let
 \[
   \xi^{k_j}=\sigma(Z^{k_j}-Z^{k_{j-1}})+\sigma(z^{k_j}-z^{k_{j-1}})-\sigma R^{k_j}.
 \]
 Then, together with the definition of $(z^{k_j},Z^{k_j})$, we have
 $(\xi^{k_j},0,0)\in\partial\phi_k(y_{\!_I}^{k_j},z^{k_j},Z^{k_j})$. This means that $(y_I^{k_j},z^{k_j},Z^{k_j})$
 satisfies (C1) when $\|\xi^{k_j}\|\le \mu_{k+1}$ and $\sum_{k=0}^{\infty}\mu_{k+1}<\infty$.
 For the approximate optimal solution $(y_E^{k+1},S^{k+1})$ in (S.1),
 one may obtain it by solving the corresponding minimization alternately. Also,
 from Subsection \ref{subsubsec5.2.1} it follows that $(y_{\!_E}^{k_j},S^{k_j})$
 satisfies the criterion (C2) with $\mathcal{G}=\delta^{-1}\mathcal{I}$ if
 $\eta^{k_j}=\sigma\mathcal{A}_{E}(S^{k_j}\!-\!S^{k_{j-1}})$ satisfies
  \[
   \|\eta^{k_j}\|
   \le \sqrt{\delta\sigma}\nu_{k+1}\sqrt{\|\mathcal{A}_E^*(y_{\!_E}^{k_j}\!-\!y_E^k)+(S^{k_j}-S^k)\|^2
           +\frac{\varepsilon}{\sigma}\|S^{k_j}\!-\!S^k\|^2}.
  \]
  We call Algorithm \ref{Alg3} with the subproblems solved alternately by (C1) IEIDP-ADMM1.

  \medskip

  We apply the IEIDP-ADMM1 for solving the extended BIQ problems described in Section 4.2 of \cite{STY14},
  and compare its performance with the previous PADMM4d and the four-block ADMM of step-size $\tau=1.618$
  (although without convergent guarantee). We call the latter ADMM4d. The computational results for
  all the extended BIQ problems are obtained on the same desktop computer as before.
  We measure the accuracy of an approximate optimal solution $(X,y_I,z,Z,y_E,S)$ for \eqref{PDNNSDP}
  and \eqref{Equi-DDNNSDP} by the relative residual
  \(
   \eta=\max\big\{\eta_{P},\eta_{D},\eta_{\mathcal{S}},\eta_{\mathcal{K}},\eta_{\mathcal{S}^*},\eta_{\mathcal{K}^*},\eta_{C_1},\eta_{C_2},\eta_I,\eta_{I^*}\big\},
  \)
  where $\eta_{P},\eta_{\mathcal{S}},\eta_{\mathcal{K}},\eta_{\mathcal{S}^*},\eta_{\mathcal{K}^*},\eta_{C_1},\eta_{C_2}$
  are defined as before. The solvers IEIDP-ADMM1 and PADMM4d and ADMM4d were terminated whenever $\eta<10^{-6}$ or
  the number of iteration is over $k_{\rm max}=40000$.

   \begin{figure}[htbp]
   \begin{minipage}{0.5\linewidth}
   \centering
   \includegraphics[width=3.1in]{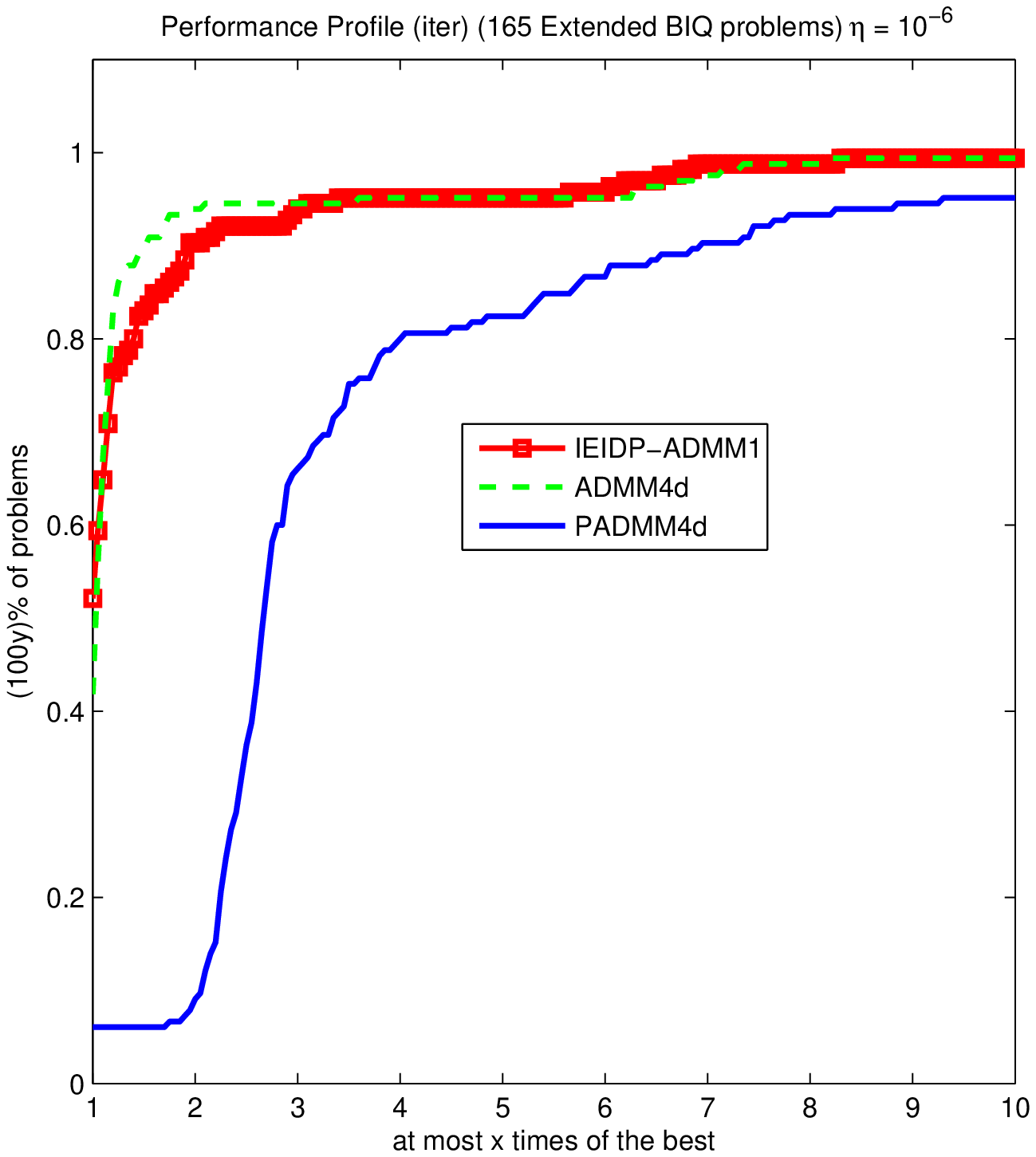}
   \end{minipage}%
   \begin{minipage}{0.5\linewidth}
   \centering
   \includegraphics[width=3.1in]{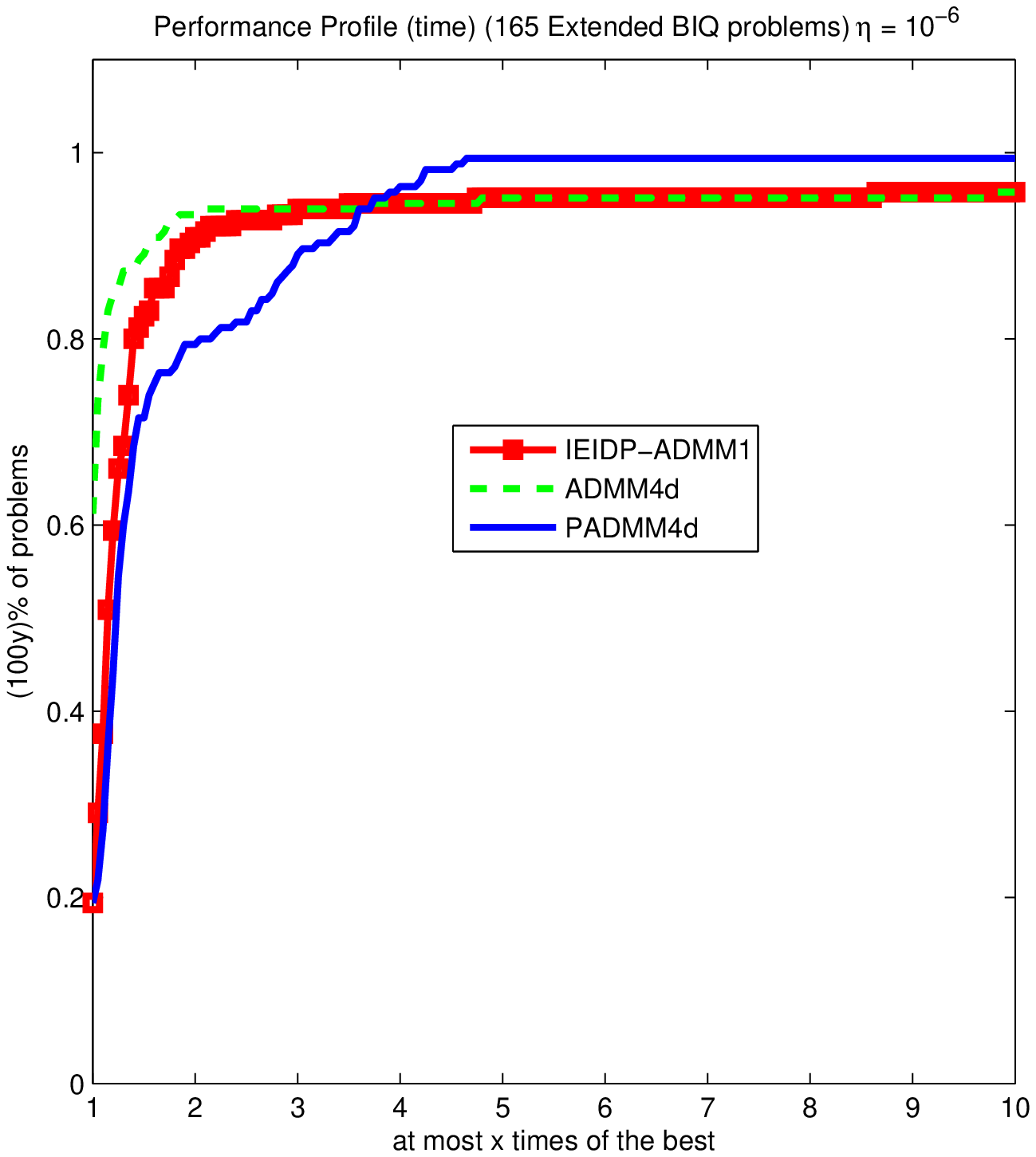}
   \end{minipage}
   \caption{\small Performance profiles of the number of iterations and computing time for EBIQ}
   \label{4EBIQ-fig}
  \end{figure}

   \medskip

  Figure \ref{4EBIQ-fig} plots the performance profiles of IEIDP-ADMM1, PADMM4d and ADMM4d
  in terms of the number of iterations and computing time, respectively, for the total $165$ extended BIQ tested problems.
  We see that, when applying the IEIDP-ADMM for solving the dual problem \eqref{Equi-DDNNSDP},
  the number of iterations and the computing time of the IEIDP-ADMM1 are still comparable with
  those of ADMM4d, but PADMM4d requires more $4$ times iterations than IEIDP-ADMM1
  and ADMM4d as do for at least $80\%$ test problems. This means that a small proximal term
  as possible is the key to the performance of proximal-type ADMMs. The computing time of PADMM4d
  is a little less than that of IEIDP-ADMM1 and ADMM4d since the latter solves an $m_I\times m_I$
  linear system with the conjugate gradient method, where $m_I$ may attain $374250$.

  \section{Conclusion}\label{sec6}

  We developed an inexact indefinite proximal ADMM of step-size $\tau\in\!(0,\frac{\sqrt{5}+1}{2})$
  with two easily implementable inexactness criteria for the two-block separable convex minimization problems
  with linear constraints, for which it is either impossible or too expensive to obtain the exact solutions of
  the subproblems involved in the proximal ADMM. Numerical results for the DNNSDPs with many linear equality
  and/or inequality constraints show that the inexact indefinite proximal ADMMs are effective
  for this class of difficult three or four block separable separable convex optimization problems
  with linear constraints. Among others, the inexact indefinite proximal ADMM with the absolute
  error criterion (C1) is comparable with the directly extended ADMM of step-size $\tau=1.618$,
  whether in terms of the number of iterations or computing time, and is superior to the one with
  the relative error criterion (C2) by weighing the number of iterations and the computing time
  since the latter is very restrictive and requires too many iterations for the solution of subproblems.
  In our future research work, we will explore other easily implementable inexact criteria like relaxing
  $\mu_{k+1}$ and $\nu_{k+1}$ in (C2) to be a constant, and study the nonergodic convergence
  \cite{DY2014a,DY2014b} for the inexact indefinite proximal ADMMs.

 \bigskip
 \noindent
 {\large\bf Acknowledgements.} The authors would like to thank Professor Kim-Chuan Toh from
 National University of Singapore for providing us the code, and Professor Defeng Sun
 from National University of Singapore for the suggestion on numerical comparison of \eqref{DDNNSDP} and \eqref{Equi-DDNNSDP}.

\end{document}